\numberwithin{equation}{section}
\newcommand{\R}{\ensuremath{\mathbb{R}}\xspace}
\newcommand{\Rn}[1][3]{\ensuremath{\R^#1}\xspace}
\newcommand{\bs}[1]{\boldsymbol{#1}}
\newcommand{\n}{\ensuremath{\bs{n}}\xspace}
\newcommand{\Dg}{D_{\Gamma}}
\newcommand{\gradg}{\nabla_{\Gamma}}
\DeclareMathOperator*{\argmin}{arg\, min}
\DeclareMathOperator{\Lin}{Lin} 
\DeclareMathOperator{\Div}{div} 
\DeclareMathOperator{\Divg}{div_\Gamma}  
\newcommand{\lb}{\Delta_{\Gamma}}
\newcommand{\tensor}{\otimes}
\newcommand{\definedas}{:=}
\DeclareMathOperator{\tr}{tr}
\newcommand{\Ad}{\mathcal{A}}
\newcommand{\Vel}{\mathcal{V}}
\newcommand{\D}{\mathbf{D}}
\newcommand{\V}{\mathbb{V}}
\theoremstyle{theorem}
\newtheorem{thrm}{Theorem}
\newtheorem{lmm}[thrm]{Lemma}
\newtheorem{crllr}[thrm]{Corollary}
\newtheorem{prpstn}[thrm]{Proposition}
\theoremstyle{remark}
\newtheorem{rmrk}[thrm]{Remark}
\newtheorem{dfntn}[thrm]{Definition}
\begin{document}

% \frontmatter
\title{The shape derivative of the Gauss curvature%
\thanks{Partially supported by CONICET through grants PIP 112-2011-0100742 and 112-2015-0100661, by Universidad Nacional
	del Litoral through grants CAI+D 501 201101 00476 LI, by Agencia Nacional de
	Promoci\'on Cient\'ifica y Tecnol\'ogica, through grants PICT-2012-2590 and PICT-2014-2522 (Argentina)}}
\author{An\'{i}bal Chicco-Ruiz, 
	Pedro Morin, and
	M.~Sebastian Pauletti\\
	UNL, Consejo Nacional de Investigaciones Cient\'ificas y T\'ecnicas, FIQ,
	Santiago del Estero 2829, S3000AOM, Santa Fe, Argentina.\\
	\texttt{achicco,pmorin,pauletti@santafe-conicet.gov.ar}
}
%\date{}                     %% if you don't need date to appear

\maketitle

\begin{abstract}
  We introduce new results about the shape derivatives of 
  scalar- and vector-valued functions. % dependent on shape domains and surfaces.
  They extend the results from~\cite{DN2012} 
  to more general surface energies. In~\cite{DN2012}  Do\u{g}an and Nochetto consider 
  surface energies defined as integrals over surfaces of functions 
  that can depend on the position, the unit normal and the mean curvature
  of the surface.
  In this work we present a systematic way to derive
  formulas for the shape derivative of more general geometric quantities, including the Gauss curvature 
  (a new result not available in the literature)
  and other geometric invariants
  (eigenvalues of the second fundamental form).
  This is done for hyper-surfaces in the Euclidean space of any 
  finite dimension.
  As an application of the results, with relevance for numerical
  methods in applied problems, we introduce a new scheme
  of Newton-type to approximate a minimizer of a shape functional.
  It is a mathematically sound generalization of the method presented in
  \cite{CMP2016}. 
  We finally find the particular formulas for the first and second order shape derivative of the area and the Willmore functional,
  which are necessary for the Newton-type method mentioned above.
  
  \noindent\textbf{2010 Mathematics Subject Classification. } %\subjclass[2010]{
  65K10, 49M15, 53A10, 53A55.
  %}
  
  \noindent\textbf{Keywords. }
  %\keywords{
  Shape derivative, Gauss curvature, shape optimization, 
  differentiation formulas.
  %}
  
%  the latter formula being instrumental to use a Newton-type method to
%  find the minimizer of the Willmore energy.
\end{abstract}

\section{Introduction}
Energies that depend on the domain appear in applications in many areas, from 
materials science, to biology, to image processing. 
Examples when the domain dependence of the energy occurs through surfaces include the minimal surface problem, the study of the shape of droplets (surface tension), image segmentation and shape of biomembranes, to name a few.
In the language of the shape derivative theory
\cite{SZ1992,DZ2011,Wal2015}, these energies are called shape functionals.
This theory provides a solid mathematical framework to pose and solve minimization problems for such functionals. 

For most of the problems of interest, the energy (shape functional) can be cast as $\int_{\Gamma} F(\text{``geometrical quantities''})$,
where ``geometrical quantities'' stands for quantities such as the normal $\n$, 
the mean curvature $\kappa$, the Gauss curvature $\kappa_g$, or in general any quantity that is well defined 
for a surface $\Gamma$ as a geometric object, i.e.,  independent of the 
parametrization.
For example, $F=1$ in the case of minimal surface,
$F=F(x,\n)$ is used in the modeling of crystals \cite{ATW1993,AT1995,Tay1978,Tay1992}
in materials science. 
The Willmore functional corresponds to $F=\frac{1}{2}\kappa^2$\cite{Wil982}---where $\kappa$ is
the mean curvature---and the related spontaneous curvature functional to $F=\frac{1}{2}(\kappa-\kappa_0)^2$; they are used in models for the bending energy of membranes, particularly in the study of biological vesicles \cite{Hel1973,Jen1977,Jen1977b,SBL1991}.
The modified form of the Willmore functional, which corresponds to $F=g(x)\kappa^2$,
is applied to model biomembranes when
the concentration or composition of lipids changes spatially \cite{BDWJ2005,CKV2007}.

The minimization of these energies requires the knowledge of their
(shape) derivatives with respect to the domain
and has motivated researchers to seek
formulas for the shape derivative of the normal and the mean
curvature. 
The shape derivative of the normal is simple and can be found
in \cite{DZ2011,Wal2015} among other references.
Particular cases of $F=F(x,n)$ are derived in \cite{BP1996, MWBC.EA1993, Tay1992}.
The shape derivative of the mean curvature or particular cases of $F=F(\kappa)$ can also be 
found in \cite{Wil1993,HR2003,Rus2005, Dzi2008, DB2010, DN2012, Wal2015},
where the shape derivative is computed from scratch; some using parametrizations, 
others in a more coordinate-free setting using the signed distance function,
but in general the same computations are repeated each time a new functional 
dependent on the mean curvature appears.
A more systematic approach to the computations is found in \cite{DN2012}, where
Do\u{g}an and Nochetto propose a formula for the shape derivative of a functional of the form
$F=F(x,\n,\kappa)$, that relies on knowing the shape derivatives of $\n$ and $\kappa$.
They rightfully assert that by having this formula at hand, it wouldn't be necessary to 
redo all the computations every time a new functional depending on these quantities appears.
%They call a functional of this type a general form of the energy.

The main motivation of this article is to find such a formula when $F$ also 
depends on the Gauss curvature $\kappa_g$
%---as is the case for the Willmore functional--- 
which, as far as we know, has not been provided elsewhere.
In fact, we let $F$ also depend on differential operators
of basic geometric quantities such as $\nabla_\Gamma\kappa $ or $\Delta_\Gamma\kappa$.
These are important when second order shape derivatives are necessary in Newton-type methods for minimizing functionals.
%
%We wondered why they did not consider also adding the Gauss curvature as a parameter of $F$ in \cite{DN2012}, given that the Gauss curvature has the same hierarchy as a geometrical quantity as the mean curvature. A very probable answer is that nobody, as far as we know, has provided a nice formula for the shape derivative of Gauss curvature. We searched the standard textbooks and publications have not found any. Also by email communication with authors of papers and books corroborating our not finding. 

% The need for a differentiation formula that applies to a more general family of shape functionals
% led as to develop a more systematic approach based on repeated applications of Theorem \ref{}, 
% that not only allows us to find $\kappa_g'$ but also gives a formula for the shape derivatives
% of higher order geometrical quantities.
% The only knowledge required to start the process, is that of $\n'$. From there we get for example
% $\kappa'$, $\kappa_g'$ and $(\Delta_\Gamma\kappa)'$ and so 
% $(\int F(\kappa, \kappa_g, \Delta_\Gamma\kappa))'$, to name a few.

Our new results (Section \ref{sec:main}) allow us to develop a more systematic approach 
to compute shape derivatives of integrands that are functional relations of geometric quantities.
The method, starting from the shape derivative of the normal, provides a
formula for the shape derivative of higher order tangential derivatives of 
geometrical quantities.
In particular we give a nice formula for the shape derivative of the gaussian 
curvature
and extend the results of \cite{DN2012} to more generals integrands.

These results are also instrumental to develop a relevant numerical
method in applied problems. More precisely, we introduce a new scheme
of the Newton-type to find a minimizer of a surface shape functional.
It is a mathematically sound generalization of the method used in
\cite{CMP2016}. 
% Then we specialized the general formula obtained in the method for the area and the Willmore functional, the latter one possible thanks to the new formula for the shape derivative of the gauss curvature.

In Section~\ref{S:prelim} we state some preliminary concepts and elements of 
basic tangential calculus.
In Section~\ref{S:functionals} we recall the concept of shape differentiable 
functionals through the velocity method.
In Section~\ref{S:shape-domain} we motivate and introduce the concept of shape 
derivative of functions involved in the definition of shape functionals through 
integrals over the domain.
In Section~\ref{S:shape-boundary} we motivate and introduce the concept of 
shape derivative of functions involved in the definition of shape functionals 
through integrals over the boundaries of domains.
In Section~\ref{S:domain-properties} we explore the relationship between the 
shape derivative of domain functions and the classical derivative operators.
In Section~\ref{S:boundary-properties} we explore the relationship between the 
shape derivative of boundary functions and the tangential derivative operators.
These last sections set the foundations for Section~\ref{sec:main} where the 
shape derivatives of the tangential derivatives of geometric quantities are 
obtained.
We end with Sections~\ref{sec:invariants} and~\ref{s:newton} where we 
apply the results to obtain the shape derivatives of the Gauss curvature, the 
geometric invariants and introduce a quasi Newton method in the language of 
shape derivatives whose formula is then computed for the Willmore 
functional.

%%%%%%%%%%%%%%%%%%%%%%%%%%%%%%%%%%%%%%%%%%%%%%%%%%%%%%%%%%%%%%%%%%%%%%%%%%%%%%%%
\section{Preliminaries}\label{S:prelim}
%%%%%%%%%%%%%%%%%%%%%%%%%%%%%%%%%%%%%%%%%%%%%%%%%%%%%%%%%%%%%%%%%%%%%%%%%%%%%%%%
\subsection{General concepts}
%%%  
Our notation follows closely that of~\cite[Ch.~2, Sec.~3]{DZ2011}. A 
\emph{domain} is an open and bounded subset $\Omega$ of $\Rn[N]$, and a 
\emph{boundary} is the boundary of some domain, i.e., $\Gamma=\partial \Omega$.
An \emph{$N-1$ dimensional surface} in $\Rn[N]$ can be thought of as a 
reasonable subset of a boundary in $\Rn[N]$.
If a boundary $\Gamma$ is smooth, we denote the \emph{normal vector field} 
by $\bs{n}$ and assume that it points outward of $\Omega$. 
The \emph{principal curvatures}, denoted by $\kappa_1, \dots, \kappa_{N-1}$, 
are the eigenvalues of the second fundamental form of $\Gamma$, which are all 
real.
The \emph{mean curvature} $\kappa$ and \emph{Gaussian curvature} $\kappa_g$ are
\begin{equation}\label{curvatures}
\kappa = \sum_{i=1}^{N-1} \kappa_i \qquad \text{ and }\qquad 
\kappa_g =\prod_{i=1}^{N-1} \kappa_i. 
\end{equation} 
% This definition of $\kappa$ is sometimes called additive curvature, since 
% the mean curvature is usualy consider as the mean 
%$\frac{1}{N-1} \sum_{i=1}^{N-1} \kappa_i $.
We will obtain analogous and more useful definitions of 
$\kappa$ and $\kappa_g$ using tangential derivatives of the normal; see 
(\ref{mcurvature2}) and (\ref{gcurvature2}) for $N=3$, and Section 
\ref{sec:invariants} for any dimension, where we introduce the geometric 
invariants of a surface, following Definition 3.46 of \cite{Kuhnel2013}.
  
In the scope of this work a \emph{tensor} $S$ is a bounded, linear operator 
from a normed vector space $\mathbb{V}$ to itself. 
The set of tensors is denoted by $\Lin(\mathbb{V})$.
If $ \dim(\mathbb{V})=N$, $S$ can be represented by an $N\times N$ matrix 
$S_{ij}$. We will mainly consider $\mathbb{V}=\Rn[N]$.
   
For a vector space with a scalar product, the \emph{tensor product}  of two 
vectors $\bs{u}$ and $\bs{v}$ is the tensor $\bs{u}\tensor \bs{v}$ which 
satisfies  $(\bs{u}\tensor \bs{v})\bs{w} =(\bs{v}\cdot \bs{w})\bs{u}$. The 
\emph{trace} of a tensor $S$ is $\tr (S) = \sum_i S\bs{e}_i \cdot \bs{e}_i$, 
with $\{ \bs{e}_i\}$ any orthonormal basis of $\mathbb{V}$. The trace of a 
tensor $\bs{u}\tensor \bs{v}$ is $\tr(\bs{u}\tensor \bs{v})=\bs{u}\cdot\bs{v}$. 
 
The \emph{scalar product of tensors} $S $ and $T$ is given by $S:T=\tr(S^T T)$, 
where $S^T$ is the transpose of $S$, which satisfies $S\bs{u}\cdot 
\bs{v}=\bs{u}\cdot S^T\bs{v}$, and the \emph{tensor norm} is $|S|=\sqrt{S:S}$. 
From~\cite[Ch.~I]{Gur1981} we have the following properties:

\begin{lmm}[Tensor Properties]\label{le:tensor}
For vectors $\bs{u}$, $\bs{v}$, $\bs{a}$,  $\bs{b} \in \mathbb{V}$, and tensors 
$S$, $T$, $P \in \Lin(\mathbb{V})$, we have:
\begin{itemize}
\item $S(\bs{u}\tensor\bs{v})=S\bs{u}\tensor\bs{v}$ and 
  $(\bs{u}\tensor\bs{v})S=\bs{u}\tensor S^T\bs{v}$,
\item $I:S=tr(S)$,
\item $ST:P=S:PT^T = T:S^TP$,
\item $S: \bs{u}\tensor\bs{v}=u\cdot S\bs{v}$,
\item$(\bs{a}\tensor\bs{b}):(\bs{u}\tensor\bs{v})= 
  (\bs{a}\cdot\bs{u})(\bs{b}\cdot\bs{v})$,
\item $S:T=S:T^T = \frac12 S:(T+T^T)$ if $S$ is symmetric.
\end{itemize}
\end{lmm}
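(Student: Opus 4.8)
The plan is to derive all six identities directly from the three defining relations stated just above the lemma: the action of a tensor product $(\bs{u}\tensor\bs{v})\bs{w}=(\bs{v}\cdot\bs{w})\bs{u}$, the coordinate-free trace $\tr(S)=\sum_i S\bs{e}_i\cdot\bs{e}_i$, and the double-dot product $S:T=\tr(S^TT)$, together with the defining property of the transpose $S\bs{u}\cdot\bs{v}=\bs{u}\cdot S^T\bs{v}$. The only auxiliary facts I expect to need are the cyclic invariance $\tr(AB)=\tr(BA)$ and the transpose invariance $\tr(M)=\tr(M^T)$ of the trace; both follow in one line from the definition (the latter because $M^T\bs{e}_i\cdot\bs{e}_i=\bs{e}_i\cdot M\bs{e}_i=M\bs{e}_i\cdot\bs{e}_i$), so I would record them at the outset. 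I would then prove the two operator-valued identities of the first bullet, because the remaining scalar identities are most cleanly obtained from them.

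For the first bullet I would test each equality against an arbitrary vector $\bs{w}$. The left identity is $S(\bs{u}\tensor\bs{v})\bs{w}=S\big((\bs{v}\cdot\bs{w})\bs{u}\big)=(\bs{v}\cdot\bs{w})S\bs{u}=(S\bs{u}\tensor\bs{v})\bs{w}$, and the right one is $(\bs{u}\tensor\bs{v})S\bs{w}=(\bs{v}\cdot S\bs{w})\bs{u}=(S^T\bs{v}\cdot\bs{w})\bs{u}=(\bs{u}\tensor S^T\bs{v})\bs{w}$, the middle step using the transpose relation. Since $\bs{w}$ is arbitrary, the operator equalities follow. Bullet four is then immediate: $S:\bs{u}\tensor\bs{v}=\tr\big(S^T(\bs{u}\tensor\bs{v})\big)=\tr(S^T\bs{u}\tensor\bs{v})=S^T\bs{u}\cdot\bs{v}=\bs{u}\cdot S\bs{v}$, where I apply the first bullet and the given $\tr(\bs{a}\tensor\bs{b})=\bs{a}\cdot\bs{b}$. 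Bullet five is just the special case $S=\bs{a}\tensor\bs{b}$ of bullet four combined with the action of a tensor product, giving $(\bs{b}\cdot\bs{v})(\bs{a}\cdot\bs{u})$.

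The three trace-based identities are equally direct. Bullet two is $I:S=\tr(I^TS)=\tr(S)$. Bullet three is $ST:P=\tr\big((ST)^TP\big)=\tr(T^TS^TP)$, which equals $\tr(S^TPT^T)=S:PT^T$ by cyclic invariance, while $T:S^TP=\tr(T^TS^TP)$ is the same expression; hence all three coincide. For bullet six, symmetry $S^T=S$ gives $S:T^T=\tr(S^TT^T)=\tr(ST^T)=\tr\big((ST^T)^T\big)=\tr(TS)=\tr(ST)=\tr(S^TT)=S:T$, using transpose and cyclic invariance; averaging the two equal expressions yields $\tfrac12 S:(T+T^T)$.

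I do not anticipate a genuine obstacle: the content is essentially bookkeeping with the definitions. The only places demanding a little care are the consistent use of the transpose relation in the first and fourth bullets and the correct chaining of cyclic and transpose invariance in bullet six, so stating those two trace properties explicitly at the start keeps every subsequent computation to a single line. The sole structural point is to prove the operator identity of the first bullet before bullets four and five, which rely on it.
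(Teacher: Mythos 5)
Your proof is correct in every step. Note, however, that the paper offers no proof of this lemma at all: it simply cites Gurtin's book (``From [Ch.~I, Gur1981] we have the following properties''), so there is no argument in the paper to compare yours against. Your self-contained derivation --- proving the operator identities of the first bullet by testing against an arbitrary vector, then reducing bullets four and five to them, and handling the trace-based identities via transpose- and cyclic-invariance of $\tr$ --- is the standard one and is exactly what a reader consulting the reference would find. The only point worth flagging is that cyclic invariance $\tr(AB)=\tr(BA)$, unlike $\tr(M)=\tr(M^T)$, does not quite fall out of the coordinate-free definition $\tr(S)=\sum_i S\bs{e}_i\cdot\bs{e}_i$ in a single line without expanding both factors in the orthonormal basis (or invoking bullet four together with linearity of the trace on dyads); since you rely on it in bullets three and six, it deserves the same explicit one-line verification you give for transpose invariance. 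With that recorded, the argument is complete.
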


\subsection{The signed distance function}\label{S:signed distance function}
%%%
For a given domain $\Omega \subset \Rn[N]$, the \emph{signed distance function} 
$b=b(\Omega):\Rn[N]\rightarrow \R$ is given by
$b(\Omega)(x)=d_\Omega(x)-d_{\Rn[N]\setminus\Omega}(x)$, where 
$d_\Omega(x)=\inf_{y\in \Omega}|y-x|$.
If $\Omega$ is of class $\mathcal{C}^{1,1}$ (see Definitions 3.1 and 3.2 in 
\cite[Ch.~2]{DZ2011}) then for each $x\in \Gamma=\partial\Omega$ there exists a 
neighborhood $W(x)$ such that  $b\in \mathcal{C}^{1,1}(\overline{W(x)})$, 
$|\nabla b |^2 = 1$ in $W(x)$ and $\nabla b = \bs{n}\circ p$, where $\bs{n}$ is 
the unit normal vector field of $\Gamma$ and $p=p_\Gamma$ is the projection onto 
$\Gamma$ which is well defined for $y \in W(x)$ as $p(y) = 
\arg\min_{z\in\Gamma}|z-y|$  (see Theorem 8.5 of \cite[Ch.~7]{DZ2011}).
    
Moreover, if $\Omega$ is a $\mathcal{C}^2$ domain with compact boundary $\Gamma$ 
then there exists a tubular neighborhood $S_h(\Gamma)$ such that $b\in 
\mathcal{C}^{2}(S_h(\Gamma))$ (\cite[Ch.~9,p.~492]{DZ2011}), and 
$\Gamma$ is a $\mathcal{C}^2$-manifold of dimension $N-1$. Therefore,  
$\nabla b$ is a $\mathcal{C}^{1}$ extension for $\bs{n}(\Gamma)$ which 
satisfies
\begin{equation}\label{eq:eikonal}
|\nabla b|^2 \equiv 1 \text{ in } S_h(\Gamma).
\end{equation}
This Eikonal equation readily implies
\begin{equation}\label{eq:eikonal2}
D^2b\, \nabla b \equiv 0. 
\end{equation}
Also, if $\Omega$ is $\mathcal{C}^3$, we can differentiate \eqref{eq:eikonal2} 
to obtain 
\begin{equation}\label{eq:eikonal2.5}
\Div (D^2b)\cdot \nabla b =-|D^2b|^2 
\end{equation}
where we have used the product rule formula  $\Div(S^T\bs{v})=S: 
\nabla\bs{v}+\bs{v}\cdot \Div S$ where $S$ and $\bs{v}$ are tensor and vector 
valued differentiable functions, respectively,  with $S=D^2b$ and $\bs{v}=\nabla 
b$ (see \cite{Gur1981}, page 30). The divergence $\Div S$ of a tensor valued 
function is a vector which satisfies $\Div S \cdot \bs{e} =\Div (S^T\bs{e}) $ 
for any vector $\bs{e}$.

Applying the well known identity~\cite[p. 32]{Gur1981}
\begin{equation}\label{eq:div-formula}
\Div(D\bs{v}^T)=\nabla(\Div \bs{v}),
\end{equation}
to $\bs{v}=\nabla b$ we can write (\ref{eq:eikonal2.5}) as follows:       
\begin{equation}\label{eq:eikonal3}
\nabla \Delta b \cdot \nabla b =-|D^2b|^2. 
\end{equation} 
  
Since $\bs{n}(\Gamma)= \nabla b |_\Gamma$, we can obtain from $b$ more geometric 
information about $\Gamma$. Indeed, the $N$ eigenvalues of $D^2 b|_\Gamma$ are the
principal curvatures $\kappa_1, \kappa_2,\dots,\kappa_{N-1}$ of $\Gamma$ and zero~\cite[Ch.~9, p.~500]{DZ2011}.
The mean curvature of $\Gamma$, given by (\ref{curvatures}), can also be obtained as 
$\kappa =\tr D^2b = \Delta b$ (on $\Gamma$).
Also, $|D^2b|^2=\tr (D^2b)^2= \sum \kappa_i^2$, the sum of the square of the 
principal curvatures so that the Gaussian curvature is $\kappa_g 
=\frac{1}{2}\left[(\Delta b)^2 - |D^2b|^2\right]$;
notice that the right-hand side of this last identity makes sense in $S_h(\Gamma)$ whereas the left-hand side is defined only on $\Gamma$, so that the equality holds on $\Gamma$. Moreover, from (\ref{eq:eikonal3})  we obtain that 
\begin{equation*} \frac{\partial \Delta b }{\partial n} = - \sum \kappa_i^2, 
\end{equation*} and with a slight abuse of notation we may say that 
$\frac{\partial \kappa}{\partial n} = - \sum \kappa_i^2$.

The \emph{projection} of a point $x \in S_h(\Gamma)$ onto $\Gamma$ is given by  
\cite[Ch.~9, p.~492]{DZ2011} \begin{equation}\label{eq:projection} 
p(x)=x-b(x)\nabla b(x), \end{equation} and also, for any $x\in S_h(\Gamma)$, 
\emph{the orthogonal projection operator} of a vector of $\Rn[N]$ onto the 
tangent plane $T_{p(x)}(\Gamma)$, for $\Gamma \in \mathcal{C}^1$, is given by 
$P(x)=I-\nabla b(x) \otimes \nabla b(x)$. Note that the tensor $P(x)$ is 
symmetric and 
\begin{equation}\label{eq:vector-projection}
P=I-\bs{n}\tensor \bs{n} \qquad \text{on }\Gamma.
\end{equation}

It will be useful to know that the Jacobian of the projection vector field 
$p(x)$ is given, for $\Gamma \in \mathcal{C}^2$, by
\begin{equation}\label{eq:projection-derivative}
D p(x)=P(x) - b(x) D^2b(x)
\end{equation}
and satisfies $Dp|_\Gamma = P$ because $b=0$ on $\Gamma$.

\subsection{Elements of tangential calculus}\label{sec:tangential}
%%%
Following \cite[Ch.~9, Sec.~5]{DZ2011} we will introduce some basic elements of 
differential calculus on a $\mathcal{C}^1$-submanifold of codimension 1 denoted 
by $\Gamma$. This approach avoids local bases and coordinates by using intrinsic 
tangential derivatives. 
All proofs can be found in the cited book, except for Lemmas 
\ref{le:DgProperties} and \ref{le:div-formula-tan}, which are proved below.
\begin{dfntn}[Tangential Derivatives]\label{def:tangential-derivative}
Assume that $\Gamma\subset\partial\Omega$ %is compact 
and there exists a tubular neighborhood $S_h(\Gamma)$ such that  $b=b(\Omega)
\in  \mathcal{C}^1(S_h(\Gamma))$. For a scalar field $f\in 
\mathcal{C}^1(\Gamma)$ and a vector field $\bs{w}\in\mathcal{C}^1(\Gamma, 
\Rn[N])$ we define the \emph{tangential derivative operators} as
\begin{gather*}
%	\begin{split}
\gradg f \definedas %\nabla F - \frac{\partial F}{\partial n} \bs{n} =
(I-\n\tensor \n) \nabla F; \quad
\Dg \bs{w}   \definedas D \bs{W} - D \bs{W}\bs{n}\otimes\bs{n}; 
\quad
\Divg \bs{w} \definedas\Div \bs{W} - D \bs{W}\bs{n}\cdot\bs{n}, 
%\end{split}
\end{gather*}
where $F$ and $\bs{W}$ are $\mathcal{C}^1$-extensions to a neighborhood of 
$\Gamma$ of the functions $f$ and $\bs{w}$, respectively.

For a scalar function $f\in\mathcal{C}^2(\Gamma)$, 
the second order tangential derivative is given by $\Dg^2 f = \Dg (\gradg f)$, 
which is not a symmetric tensor, and
  the \emph{Laplace-Beltrami operator} (or tangential laplacian) is given by 
$\lb f = \Divg \gradg f$.

\end{dfntn}

Using the orthogonal projection operator $P$ given by 
(\ref{eq:vector-projection}), we can write 
\begin{equation*}
\gradg f =(P\,\nabla F)|_\Gamma,\qquad   
\Dg \bs{w}=(D\bs{W}\,P)|_\Gamma, \qquad
\Divg \bs{w} =(P:D\bs{W})|_\Gamma.
\end{equation*}

As it was proved in the cited book \cite{DZ2011} these definitions are 
intrinsic, that is, they do not depend on the chosen extensions of $f$ and 
$\bs{w}$ outside $\Gamma$.
Among all extensions of $f$, there is one, for $\Gamma \in \mathcal{C}^2$, that 
simplifies the calculation of $\gradg f$. That extension is $f\circ p$, where 
$p$ is the projection given by (\ref{eq:projection}), and we call it the 
\emph{canonical extension}. 
The following properties of the canonical extensions are proved 
in~\cite[Ch.~9, Sec.~5.1]{DZ2011} 
\begin{lmm}[Canonical extension]\label{le:canonical-ext}
For $\Gamma$, $f$ and $\bs{w}$ satisfying the assumptions of 
Definition \ref{def:tangential-derivative}, consider  $F=f\circ p$, and 
$\bs{W}=\bs{w}\circ p$, the canonical extensions of $f$ and $\bs{w}$, 
respectively, where $p$ is the projection given by (\ref{eq:projection}). 
Then
\begin{align*}
\nabla(f\circ p)&=[I-b\,D^2b]\gradg f\circ p, \qquad \qquad
D(\bs{w}\circ p) =\Dg\bs{w}\circ p\,[I-b\,D^2b],\\
\Div(\bs{w}\circ p)&=[I-b\,D^2b]:\Dg\bs{w}\circ p
= \Divg \bs{w}\circ p -b D^2b: \Dg\bs{w}\circ p.
\end{align*}
In particular,
\begin{equation}\label{eq:tan-der-def3}
\gradg f =  \nabla(f\circ p)|_\Gamma, \qquad
\Dg\bs{w} =  D(\bs{w}\circ p)|_\Gamma,\qquad
\Divg\bs{w} =  \Div(\bs{w}\circ p)|_\Gamma.
\end{equation} 
\end{lmm}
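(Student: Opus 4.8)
The plan is to reduce all three identities to the chain rule together with two facts about the signed distance function: that the projection tensor $P$ is constant along normal lines, and that $D^2b$ annihilates $\nabla b$. Throughout I would work with an arbitrary $\mathcal{C}^1$ extension $\tilde f$ of $f$ (and $\tilde{\bs{w}}$ of $\bs{w}$), noting that since $p$ maps $S_h(\Gamma)$ into $\Gamma$, we have $f\circ p=\tilde f\circ p$, so the chain rule can be applied safely to a function defined on the whole neighborhood.

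First I would establish the gradient formula. The chain rule gives $\nabla(f\circ p)=Dp^{T}\,(\nabla\tilde f)\circ p$, and substituting $Dp=P-bD^2b$ from \eqref{eq:projection-derivative} together with the symmetry of $P-bD^2b$ (both $P$ and $D^2b$ being symmetric) turns this into $[P-bD^2b]\,(\nabla\tilde f)\circ p$. The key observation is that $P$ is constant along normals: since $\nabla b=\n\circ p$ and $p\circ p=p$, we get $\nabla b(p(x))=\nabla b(x)$ and hence $P(p(x))=P(x)$, so $(\gradg f)\circ p=P\,(\nabla\tilde f)\circ p$ with the same $P$ that appears in $Dp$. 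Comparing with the claimed right-hand side $[I-bD^2b]\,(\gradg f)\circ p$, the two expressions differ only by $bD^2b\,(I-P)(\nabla\tilde f)\circ p$, and since $I-P=\nabla b\tensor\nabla b$ and $D^2b\,\nabla b=0$ by \eqref{eq:eikonal2}, this term vanishes.

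Next I would treat the vector field by the same mechanism. The chain rule gives $D(\bs{w}\circ p)=(D\tilde{\bs{w}})\circ p\;[P-bD^2b]$, while $(\Dg\bs{w})\circ p=(D\tilde{\bs{w}})\circ p\,P$ (again using $P(x)=P(p(x))$). The discrepancy with the claimed $(\Dg\bs{w})\circ p\,[I-bD^2b]$ is $b\,(D\tilde{\bs{w}})\circ p\,(I-P)D^2b$, which is zero because $(I-P)D^2b=(\nabla b\tensor\nabla b)D^2b=\nabla b\tensor D^2b\,\nabla b=0$ --- using the tensor identity $(\bs{u}\tensor\bs{v})S=\bs{u}\tensor S^{T}\bs{v}$ from Lemma \ref{le:tensor}, the symmetry of $D^2b$, and \eqref{eq:eikonal2}. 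The divergence identity then follows by taking traces: since $\Div(\bs{w}\circ p)=\tr D(\bs{w}\circ p)$ and $I-bD^2b$ is symmetric, cyclicity of the trace and the definition $S:T=\tr(S^{T}T)$ give $\Div(\bs{w}\circ p)=[I-bD^2b]:(\Dg\bs{w})\circ p$; splitting this scalar product linearly and using $I:S=\tr S$ together with $\tr\Dg\bs{w}=\Divg\bs{w}$ yields the final form $\Divg\bs{w}\circ p-bD^2b:\Dg\bs{w}\circ p$.

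Finally, the identities in \eqref{eq:tan-der-def3} follow by restricting to $\Gamma$: there $b=0$, so $I-bD^2b=I$, and $p|_\Gamma=\mathrm{id}$ makes every composition with $p$ trivial. I expect the only genuinely delicate point to be the careful handling of the chain rule for $f$ and $\bs{w}$ defined only on $\Gamma$, resolved by passing to an arbitrary extension and using idempotency of $p$; once the normal contributions are seen to cancel through $P(x)=P(p(x))$ and $D^2b\,\nabla b=0$, everything reduces to routine tensor algebra.
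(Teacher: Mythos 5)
Your proof is correct: the chain rule with $Dp^T = Dp = P - b\,D^2b$, the invariance $P\circ p = P$ (via $\nabla b = \n\circ p$ and $p\circ p=p$), and the cancellation of the discrepancy terms through $D^2b\,\nabla b=0$ all check out, as does the trace argument for the divergence identity and the restriction to $\Gamma$ where $b=0$. Note that the paper does not prove this lemma itself but defers to \cite[Ch.~9, Sec.~5.1]{DZ2011}; your argument is the standard one and uses exactly the same ingredients (the formula \eqref{eq:projection-derivative} for $Dp$ and the identity \eqref{eq:eikonal2}) that the paper deploys in its own proof of Lemma~\ref{le:div-formula-tan}.
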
 
The tangential divergence of a tensor valued function $S$ is defined as
$\Divg S \cdot \bs{e} = \Divg (S^T\bs{e})$, for any vector $\bs{e}$.
The expressions (\ref{eq:tan-der-def3}) of tangential derivatives given by 
canonical extensions allow us to prove directly the following product rule 
formulas, already known for classical derivatives~\cite[p.~30]{Gur1981}.
\begin{lmm}[Product Rule for tangential derivatives]\label{le:DgProperties}
Let $\alpha$, $\bs{u}$, $\bs{v}$ and $S$ be smooth fields in $\Gamma$, 
with $\alpha$ scalar valued, $\bs{u}$ and 
$\bs{v}$ vector valued, and $S$ tensor valued. Then 
\begin{multicols}{2}
\begin{enumerate}[(i)]
\item $\Dg(\varphi \bs{u})= \bs{u}\otimes\nabla_\Gamma\varphi + 
  \varphi\Dg\bs{u}$,
\item $\Divg(\varphi\bs{u})=\varphi \Divg \bs{u} + \bs{u}\cdot 
  \nabla_\Gamma \varphi$
\item $\gradg (\bs{u}\cdot \bs{v}) =\Dg \bs{u}^T\bs{v} + \Dg\bs{v}^T\bs{u}$
\item $\Divg(\bs{u}\otimes\bs{v})=\bs{u}\Divg\bs{v}+D_\Gamma\bs{u}\,\bs{v}$,
\item $\Divg(S^T\bs{u})=S: \Dg \bs{u} + \bs{u}\cdot \Divg S$.
\item $\Divg(\alpha S)=S\gradg \alpha + \alpha\, \Divg S$.
\end{enumerate}
\end{multicols}
\end{lmm}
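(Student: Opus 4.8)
The plan is to prove each of the six product-rule identities by exploiting the characterization of tangential derivatives via canonical extensions, equation \eqref{eq:tan-der-def3}, which reduces every statement to the corresponding classical product rule already available from \cite{Gur1981}. The central observation is that if $\varphi$, $\bs{u}$, $\bs{v}$, $S$ are smooth fields on $\Gamma$, then their canonical extensions satisfy $(\varphi\bs{u})\circ p = (\varphi\circ p)(\bs{u}\circ p)$, $(\bs{u}\otimes\bs{v})\circ p = (\bs{u}\circ p)\otimes(\bs{v}\circ p)$, and so on, because composition with the projection $p$ commutes with pointwise algebraic operations. Thus the canonical extension of a product is the product of the canonical extensions, and I can apply the ordinary calculus product rules to these extensions and then restrict to $\Gamma$.

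\emph{First} I would treat identity (i). Taking $F = \varphi\circ p$ and $\bs{W} = \bs{u}\circ p$, the product $\varphi\bs{u}$ has canonical extension $FG$ where I abbreviate $G = \bs{u}\circ p$; by the classical rule $D(F\bs{W}) = \bs{W}\otimes\nabla F + F\,D\bs{W}$, and restricting to $\Gamma$ using \eqref{eq:tan-der-def3} together with $F|_\Gamma = \varphi$, $\nabla F|_\Gamma = \gradg\varphi$, $D\bs{W}|_\Gamma = \Dg\bs{u}$ yields exactly (i). Identity (ii) follows by taking the trace of (i), or directly from the classical divergence product rule applied to the canonical extensions, since $\Divg\bs{w} = \Div(\bs{w}\circ p)|_\Gamma$. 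Identity (iii) comes from applying the classical gradient-of-a-dot-product rule $\nabla(\bs{U}\cdot\bs{V}) = D\bs{U}^T\bs{V} + D\bs{V}^T\bs{U}$ to the canonical extensions and restricting. Identities (iv), (v), (vi) are handled identically: write down the classical product rule from \cite[p.~30]{Gur1981} for $\Div(\bs{U}\otimes\bs{V})$, $\Div(S^T\bs{U})$, and $\Div(\alpha S)$ respectively (using the stated definition $\Divg S\cdot\bs{e} = \Divg(S^T\bs{e})$ to reduce the tensor divergence to the vector case when needed), apply it to canonical extensions, and restrict to $\Gamma$.

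\emph{The one technical point that requires care} is ensuring that \eqref{eq:tan-der-def3} can be applied to the \emph{product} of canonical extensions rather than only to a single canonical extension. The definitions of the tangential operators are intrinsic, so $\Dg(\varphi\bs{u})$, $\Divg(\varphi\bs{u})$, etc. may be computed using \emph{any} $\mathcal{C}^1$-extension; I simply choose the extension to be the canonical one, $(\varphi\bs{u})\circ p = (\varphi\circ p)(\bs{u}\circ p)$. Because this coincides on a neighborhood of $\Gamma$ with the product of the individual canonical extensions, the classical product rule applies verbatim to it, and the restriction-to-$\Gamma$ formulas \eqref{eq:tan-der-def3} deliver the tangential identities. \emph{The main obstacle}, therefore, is not any single computation but rather the bookkeeping: verifying that the normal-direction correction terms appearing in Lemma \ref{le:canonical-ext} (the $b\,D^2b$ factors) all vanish upon restriction to $\Gamma$ since $b = 0$ there, so that no spurious terms survive. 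Once this is confirmed, each identity drops out mechanically, and I would present the computation for (i) in full as the template and indicate that (ii)--(vi) follow by the same argument applied to the respective classical identities.
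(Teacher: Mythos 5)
Your proposal is correct and is exactly the argument the paper intends: the text preceding the lemma states that the canonical-extension identities \eqref{eq:tan-der-def3} ``allow us to prove directly'' these product rules from the classical ones in \cite[p.~30]{Gur1981}, and then omits the details. Your observation that the canonical extension of a product is the product of the canonical extensions, together with restriction to $\Gamma$ (where the $b\,D^2b$ corrections vanish), is precisely the missing bookkeeping, and your handling of the tensor cases via $\Divg S\cdot\bs{e}=\Divg(S^T\bs{e})$ is sound.
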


It will be very useful for us to write the geometric invariants (see 
Section \ref{sec:invariants}) of $\Gamma$ in terms of tangential derivatives of 
the normal vector field $\bs{n}$. 
The tensor $-\Dg\bs{n}(x)$ (see \cite[Ch.~1.3]{Giga2006}) defined from the 
tangent plane $T_x(\Gamma)$ to itself, is called the \emph{Weingarten map} and 
is associated to the second fundamental form of $\Gamma$.
Since $\bs{n}\circ p=\nabla b $, (\ref{eq:tan-der-def3}) implies $\Dg\bs{n} = 
D(\n \circ p)|_\Gamma = D^2b|_\Gamma$, so that 
\begin{equation}\label{mcurvature2} \kappa = \Delta b|_\Gamma = \tr(D^2 
b|_\Gamma) = \tr(\Dg \n) = \Divg \bs{n}, \end{equation} and $\sum \kappa_i^2 = 
|D^2b|_\Gamma|^2 = |\Dg\bs{n}|^2$ whence, for $N=3$, 
\begin{equation}\label{gcurvature2} 
\kappa_g= \frac{1}{2}\left(\kappa^2 - 
|\Dg\bs{n}|^2\right). 
\end{equation}
In, particular, as we will see in Section~\ref{sec:invariants}, any geometric 
invariant can be written in terms of $I_p\definedas\tr(\Dg\bs{n}^p)$.
    
The Divergence Theorem for surfaces whose proof can be found in
(Prop. 15 of \cite{Wal2015}) is the following
\begin{lmm}[Tangential Divergence Theorem]\label{le:div-teo}
If $\Gamma =\partial \Omega$ is $\mathcal{C}^2$ and 
$\bs{w}\in\mathcal{C}^1(\Gamma, \Rn[N])$, then
\begin{equation}\label{eq:tan-div-teo-boundary}
\int_\Gamma \Divg\bs{w} = \int_\Gamma \kappa\, \bs{w}\cdot\bs{n},
\end{equation}
where $\kappa$ is the mean curvature of $\Gamma$ and $\bs{n}$ its normal 
field.
If $\Gamma \subsetneq \partial \Omega$, then
\begin{equation}\label{eq:tan-div-teo-surface}
\int_\Gamma \Divg\bs{w} = 
\int_\Gamma \kappa\, \bs{w}\cdot\bs{n}+ \int_{\partial\Gamma}\bs{w}\cdot 
\bs{n}_s,
\end{equation} 
where $\bs{n}_s$ is the outward normal to $\partial\Gamma$ which is also 
normal to $\bs{n}$.
\end{lmm}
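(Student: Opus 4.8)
The plan is to split $\bs{w}$ pointwise into its normal and tangential components, $\bs{w} = (\bs{w}\cdot\n)\n + \bs{w}_\Gamma$ with $\bs{w}_\Gamma \definedas \bs{w} - (\bs{w}\cdot\n)\n$, and to treat the two pieces separately: the curvature term on the right-hand side will come entirely from the normal component, and the (possible) boundary term entirely from the tangential one.

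For the normal component I would apply the product rule of Lemma~\ref{le:DgProperties}(ii) with $\varphi = \bs{w}\cdot\n$ and $\bs{u}=\n$, obtaining
\begin{equation*}
\Divg\big((\bs{w}\cdot\n)\,\n\big) = (\bs{w}\cdot\n)\,\Divg\n + \n\cdot\gradg(\bs{w}\cdot\n).
\end{equation*}
Here $\Divg\n = \kappa$ by (\ref{mcurvature2}), while the second term vanishes because $\gradg f = (P\nabla F)|_\Gamma$ is a tangent field and $P\n=\bs{0}$ by (\ref{eq:vector-projection}); hence $\n\cdot\gradg(\bs{w}\cdot\n)=0$. Thus the normal component contributes exactly $\int_\Gamma \kappa\,\bs{w}\cdot\n$, which already accounts for the first term on the right-hand side of both (\ref{eq:tan-div-teo-boundary}) and (\ref{eq:tan-div-teo-surface}).

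It then remains to show that the purely tangential field $\bs{w}_\Gamma$ satisfies $\int_\Gamma \Divg\bs{w}_\Gamma = 0$ when $\Gamma=\partial\Omega$ is closed, and $\int_\Gamma \Divg\bs{w}_\Gamma = \int_{\partial\Gamma}\bs{w}\cdot\bs{n}_s$ when $\Gamma$ has a boundary (the identification $\bs{w}_\Gamma\cdot\bs{n}_s = \bs{w}\cdot\bs{n}_s$ on $\partial\Gamma$ holds since $\bs{n}_s$ is tangent to $\Gamma$, so it annihilates the normal part). This is the classical divergence theorem for a tangential vector field on a Riemannian hypersurface, and it is the genuine analytic core of the statement. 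To prove it I would localize with a finite partition of unity subordinate to coordinate charts, reducing to the case where $\bs{w}_\Gamma$ is supported in a single parametrized patch $\Phi : U \subset \Rn[N-1] \to \Gamma$. In such a chart one verifies the coordinate identity $\Divg\bs{w}_\Gamma = \tfrac{1}{\sqrt{g}}\,\partial_i(\sqrt{g}\,v^i)$, where the $v^i$ are the components of $\bs{w}_\Gamma$ in the coordinate basis and $g$ is the determinant of the induced metric; multiplying by the area element $\sqrt{g}$ and integrating turns the surface integral into $\int_U \partial_i(\sqrt{g}\,v^i)$, to which the ordinary divergence theorem applies on $U$. Summing over the partition of unity, the interior interface terms cancel and only the contribution along $\partial\Gamma$, evaluated with the conormal $\bs{n}_s$, survives. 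The main obstacle is precisely the verification of this coordinate formula, i.e.\ reconciling the extrinsic definition used here (canonical extension composed with the projection $P$, as in Definition~\ref{def:tangential-derivative} and (\ref{eq:tan-der-def3})) with the intrinsic Riemannian divergence; once that identification is in place the remaining steps are standard. Alternatively, one may avoid charts by extending $\bs{w}_\Gamma$ canonically to the tubular neighborhood $S_h(\Gamma)$ and applying the Euclidean divergence theorem on thin shells $\{|b|<\varepsilon\}$, passing to the limit $\varepsilon\to 0$ with the help of the structure of $Dp$ in (\ref{eq:projection-derivative}); the chart argument is, however, more transparent for isolating the conormal boundary term.
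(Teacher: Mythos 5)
The paper does not prove this lemma at all: it is quoted as a known result, with the proof attributed to Proposition 15 of the cited reference \cite{Wal2015} (the surrounding text explicitly says that all proofs in that subsection except Lemmas~\ref{le:DgProperties} and~\ref{le:div-formula-tan} are to be found in the cited sources). So there is no in-paper argument to compare against; what can be said is that your proof is the standard one and, as far as it goes, correct. The splitting $\bs{w} = (\bs{w}\cdot\n)\n + \bs{w}_\Gamma$ together with Lemma~\ref{le:DgProperties}(ii), the identity $\Divg\n=\kappa$ from (\ref{mcurvature2}), and the orthogonality $\n\cdot\gradg(\bs{w}\cdot\n)=0$ (via $P\n=\bs{0}$ and the symmetry of $P$) cleanly isolates the curvature term, and the observation that $\bs{n}_s$ is tangent to $\Gamma$ correctly reduces the boundary term to the tangential part. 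The one place where your write-up is a sketch rather than a proof is exactly the point you flag yourself: the identification of the extrinsic tangential divergence of Definition~\ref{def:tangential-derivative} (defined through an ambient extension and the projection $P$) with the intrinsic Riemannian divergence $\tfrac{1}{\sqrt{g}}\partial_i(\sqrt{g}\,v^i)$ for a tangent field, which is what licenses the chart-by-chart application of the ordinary divergence theorem. That identity is true and its verification is routine (e.g.\ by computing $D\bs{W}P:P$ in a local parametrization, or via the canonical-extension formulas of Lemma~\ref{le:canonical-ext}), but it does need to be carried out for the argument to be complete; as presented, your proposal correctly locates the analytic core without discharging it.
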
   
    
The following Lemma is new and extends formula (\ref{eq:div-formula}) for 
tangential derivatives.  
\begin{lmm}\label{le:div-formula-tan}
If $\Gamma$ is $\mathcal{C}^3$ and $\bs{w}\in \mathcal{C}^3(\Gamma,\Rn[N])$, 
we have 
$\gradg \Divg \bs{w} = P \Divg \Dg\bs{w}^T-\Dg\bs{n}\Dg\bs{w}^T\bs{n}$,
where $P= I-\bs{n}\tensor\bs{n}$ is the orthogonal projection operator 
given by (\ref{eq:vector-projection}).
\end{lmm}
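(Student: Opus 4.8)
The plan is to reduce the tangential identity to the classical one, \eqref{eq:div-formula}, by working with the canonical extension and then projecting back onto $\Gamma$. Set $\bs{W}=\bs{w}\circ p$, the canonical extension of $\bs{w}$, where $p$ is the projection \eqref{eq:projection}. Since $\Gamma$ is $\mathcal{C}^3$ and $\bs{w}\in\mathcal{C}^3$, we have $b\in\mathcal{C}^3$, $p\in\mathcal{C}^2$, and $\bs{W}\in\mathcal{C}^2$ on a tubular neighborhood, so all the second-order expressions below are well defined; this is exactly where the $\mathcal{C}^3$ hypothesis is used. By Lemma~\ref{le:canonical-ext}, $\Div\bs{W}|_\Gamma=\Divg\bs{w}$, so $\Div\bs{W}$ is a $\mathcal{C}^1$ extension of the scalar field $\Divg\bs{w}$, and hence by \eqref{eq:tan-der-def3} (in its projected form $\gradg f=(P\nabla F)|_\Gamma$) we get $\gradg\Divg\bs{w}=P\,\nabla(\Div\bs{W})|_\Gamma$. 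Applying the classical identity \eqref{eq:div-formula} to $\bs{v}=\bs{W}$ turns this into
\[
\gradg\Divg\bs{w}=P\,\Div(D\bs{W}^T)\big|_\Gamma .
\]

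Next I would compute $D\bs{W}^T$ explicitly. Lemma~\ref{le:canonical-ext} gives $D\bs{W}=(\Dg\bs{w}\circ p)\,[I-b\,D^2b]$, and since $I-b\,D^2b$ is symmetric, transposing yields $D\bs{W}^T=[I-b\,D^2b]\,(\Dg\bs{w}^T\circ p)$. Write $M\definedas I-b\,D^2b$ and $T\definedas\Dg\bs{w}^T\circ p$, so $D\bs{W}^T=MT$ and, by the analogue of \eqref{eq:tan-der-def3} for tensors (which follows from the definition $\Divg S\cdot\bs{e}=\Divg(S^T\bs{e})$), one has $\Divg\Dg\bs{w}^T=\Div T|_\Gamma$. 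The task is then to show
\[
P\,\Div(MT)\big|_\Gamma=P\,\Div T\big|_\Gamma-\Dg\bs{n}\,\Dg\bs{w}^T\bs{n}.
\]
I would expand $\Div(MT)=\Div T-\Div(b\,D^2b\,T)$ and then apply the scalar product rule $\Div(\alpha S)=\alpha\Div S+S\nabla\alpha$ with $\alpha=b$ and $S=D^2b\,T$, obtaining $\Div(b\,D^2b\,T)=b\,\Div(D^2b\,T)+(D^2b\,T)\nabla b$. Restricting to $\Gamma$, the first term vanishes because $b=0$, while the second term survives through $\nabla b|_\Gamma=\bs{n}$; using $D^2b|_\Gamma=\Dg\bs{n}$ and $T|_\Gamma=\Dg\bs{w}^T$ gives the correction $\Dg\bs{n}\,\Dg\bs{w}^T\bs{n}$.

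It remains to check that the projection $P$ does not alter this correction term, i.e.\ that $P\Dg\bs{n}=\Dg\bs{n}$. This follows because $\Dg\bs{n}=D^2b|_\Gamma$ is symmetric and the Eikonal relation \eqref{eq:eikonal2} gives $\Dg\bs{n}\,\bs{n}=0$; hence $(\bs{n}\tensor\bs{n})\Dg\bs{n}=\bs{n}\tensor\Dg\bs{n}^T\bs{n}=\bs{n}\tensor(\Dg\bs{n}\,\bs{n})=0$, so $P\Dg\bs{n}=\Dg\bs{n}$. Combining the three steps gives $\gradg\Divg\bs{w}=P\,\Div T|_\Gamma-\Dg\bs{n}\,\Dg\bs{w}^T\bs{n}=P\Divg\Dg\bs{w}^T-\Dg\bs{n}\,\Dg\bs{w}^T\bs{n}$, as claimed. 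The main obstacle, and the only genuinely delicate point, is the bookkeeping in $\Div(MT)$: one must resist setting $M=I$ prematurely (valid only after restriction to $\Gamma$) and instead carry the derivative of the factor $b\,D^2b$, since precisely the surviving $\nabla b$ term produces the extra summand $-\Dg\bs{n}\,\Dg\bs{w}^T\bs{n}$ that distinguishes the tangential formula from its Euclidean counterpart.
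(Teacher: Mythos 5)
Your proof is correct and follows essentially the same route as the paper: reduce to the classical identity \eqref{eq:div-formula} via the canonical extension $\bs{w}\circ p$, then peel off the $b\,D^2b$ factor with the product rule so that only the $(D^2b\,T)\nabla b$ term survives on $\Gamma$. The only (harmless) differences are that you invoke extension-independence of $\gradg$ applied to $\Div\bs{W}$ where the paper composes with $p$ once more and uses $Dp^T|_\Gamma=P$, and that you make explicit the step $P\Dg\bs{n}=\Dg\bs{n}$ that the paper leaves implicit.
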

       
       \begin{proof}
       We use formula (\ref{eq:tan-der-def3}) to write tangential derivatives 
using the projection function $p$:
       \begin{equation*}
       \gradg \Divg \bs{w} = \nabla (\Divg \bs{w} \circ p) |_\Gamma 
= \nabla (\Div (\bs{w} \circ p)\circ p ) 
|_\Gamma.
\end{equation*}
       Then we use successively the chain rule, the derivative of $p$ given by 
(\ref{eq:projection-derivative}) and the property of classical
       derivatives
       (\ref{eq:div-formula}):
       \begin{equation}\label{eq:bla}
       \gradg \Divg \bs{w}= Dp^T |_\Gamma \nabla \Div (\bs{w} \circ p) 
|_\Gamma 
       = P\ \nabla \Div (\bs{w}\circ p) |_\Gamma 
       = P\ \Div (D(\bs{w}\circ p)^T) |_\Gamma.
       \end{equation}
       
       Note that Lemma \ref{le:canonical-ext} implies $D(\bs{w}\circ p)^T 
=\Dg\bs{w}^T\circ p - b\, D^2b\, (\Dg\bs{w}^T\circ p)$, 
       and the product rule $\Div(\alpha S)= \alpha\, \Div S + S\nabla \alpha $ 
implies
       \begin{equation*}
       \Div(D(\bs{w}\circ p)^T)= \Div(\Dg\bs{w}^T\circ p)-b\, \Div(D^2b\, 
\Dg\bs{w}^T\circ p) - 
        D^2b\, \Dg\bs{w}^T\circ p\,\nabla b.
       \end{equation*}
       Then, after restricting to $\Gamma$ we have
       $
       \Div(D(\bs{w}\circ p)^T)|_\Gamma = 
\Divg(\Dg\bs{w}^T)-\Dg\bs{n}\Dg\bs{w}^T \bs{n},
       $
       which implies, from (\ref{eq:bla}), the desired result.
       \end{proof}

Applying Lemma~\eqref{le:div-formula-tan} to $\bs{w}=\n$ we obtain for 
$\kappa = \Divg \n$
\begin{equation}\label{eq:grad_curvature}
\nabla_\Gamma \kappa = \nabla_\Gamma \Divg \n 
= P \Divg \nabla_\Gamma \n - \Dg \n \Dg \n^T \n 
= P \lb \n
\end{equation}       
because $\Dg \n^T \n = 0$ and $\lb = \Divg \nabla_\Gamma$.
       
%       \begin{crllr}\label{cor:lb-normal}
%        For $\kappa =\Divg \bs{n}$, with $\Gamma\in \mathcal{C}^3$, we have 
%       \begin{equation*}
%       \gradg \kappa = P \lb \bs{n}.
%       \end{equation*}
%  %     where the Laplace-Beltrami operator of a tensor $S$ is given by $\lb S 
%\eqdef\Divg \Dg S$. 
%       \end{crllr}
%    

%%%%%%%%%%%%%%%%%%%%%%%%%%%%%%%%%%%%%%
\section{Shape Functionals and Derivatives}\label{S:functionals}
%%%%%%%%%%%%%%%%%%%%%%%%%%%%%%%%%%%%%%
A \emph{shape functional} is a function $J:\mathcal{A}\to\R$ defined on a 
set $\mathcal{A}=\mathcal{A}(\mathbf{D})$ of admissible 
subsets of a hold-all domain $\mathbf{D}\subset \Rn[N]$.

Let the elements of $\mathcal{A}$ be smooth domains
and for each $\Omega \in \mathcal{A}$, let $y(\Omega)$ be a 
function in $W(\Omega)$ some Sobolev space over $\Omega$. 
Then the shape functional given by
$J(\Omega)= \int_{\Omega} y(\Omega)(x) dx = \int_{\Omega} y(\Omega)$
is called a \emph{domain functional}.
For example the \emph{volume functional} is obtained with $y(\Omega) \equiv 1$, 
but the \emph{domain function} $y(\Omega)$ could be something more involved such as the solution of a PDE 
in $\Omega$.

Our main interest in this work are the \emph{boundary functionals} 
given by
$J(\Gamma)= \int_{\Gamma} z(\Gamma)(x) \, d\Gamma = \int_{\Gamma} z(\Gamma)$,   
where $z$ is a function that for each surface $\Gamma$ in a family of 
admissible surfaces $\mathcal{A}$ assigns a function $z(\Gamma) \in W(\Gamma)$, 
with 
$W(\Gamma)$ some Sobolev space on $\Gamma$. The area functional corresponds to 
$z(\Gamma) \equiv 1$, but more interesting functionals are obtained when the \emph{boundary function}
$z(\Gamma)$ depends on the mean curvature $\kappa$ of $\Gamma$ or on the 
geometric invariants $I_p(\Gamma)=\tr(\Dg\bs{n}^p)$, with $p$ a positive integer, 
or any real function which involves the normal field $\bs{n}$ or higher order 
tangential derivatives on $\Gamma$. 
    
\subsection{The velocity Method}  
On a hold-all domain $\D$ (not necessarily bounded), we call an 
\emph{autonomous velocity} 
to a vector field $\bs{v}\in V^k(\D) \definedas\mathcal{C}_0^k(\D,\Rn[N])$, 
the set of all $\mathcal{C}^k$ functions $f$ such that 
$D^\alpha f$ has compact support contained in $\D$, for $0\leq|\alpha|\leq k$;
hereafter we assume that $k$ is a fixed positive integer.
A \emph{(nonautonomous) velocity field} 
$\bs{V}\in\mathcal{C}([0,\epsilon],V^k(\D))$,  (Theorem 2.16 of \cite{SZ1992}) 
induces a \emph{trajectory} $x=x_{\bs{V}} \in 
\mathcal{C}^1([0,\epsilon], V^k(\D))$, through the system of ODE      
\begin{equation}\label{def:trajectories} 
%\begin{cases}
\dot{x}(t)=\bs{V}(t)\circ x(t),\ t\in[0,\epsilon],\qquad
x(0)=id,  \\
%\end{cases}
\end{equation}
where we use a point to denote derivative respect to the time variable $t$.
\begin{rmrk}[Initial velocity]
We call $\bs{v}$ to the velocity field at $t=0$, namely 
$\bs{v} = \bs{V}(0)$. In the autonomous case, $\bs{V}(t)= \bs{v}$ for any $t$, 
with $\bs{v}\in V^k(\D)$, and the trajectory $x(t)$ is given by
\begin{equation}\label{eq:autonomous trajectories}
%\begin{cases}
\dot{x}(t)=\bs{v}\circ x(t), \  t\in[0,\epsilon],\qquad
x(0)=id. 
%\end{cases}
\end{equation}
\end{rmrk}
   
\subsection{Shape Differentiation}

Given a velocity field $\bs{V}$ and a subset $S\subset \D$, the perturbed 
set at time $t$ is given by $S_t=x(t)(S)$, 
where $x(t)$ is the trajectory given by (\ref{def:trajectories}).
For a shape functional $J:\Ad \to \R$, where $\Ad$ is a family 
of admissible sets $S$ (domains or boundaries), 
and a velocity field $\bs{V}\in \mathcal{C}([0,T], V^k(\D))$, the 
\emph{Eulerian semiderivative} of $J$ 
at $S$ in the direction $\bs{V}$ is given by
\begin{equation}\label{def:eulerian-der}
d J(S; \bs{V}) = \lim_{t\searrow 0} \frac{J(S_t)-J(S)}{t},
\end{equation}
whenever the limit exist.

\begin{dfntn}[Shape differentiable]
We say that $J$ is shape differentiable 
at $S$ when the Eulerian semiderivative (\ref{def:eulerian-der}) exists for any 
$\bs{V}$ in the vector space of velocities $ \mathcal{C}([0,T], V^k(\D))$, 
and the functional
$\bs{V} \to d J(S; \bs{V})$ is linear and continuous. 
\end{dfntn}
%%%
\begin{rmrk}[Hadamard differentiable]\label{nt:had-dif}
By Theorem 3.1 of \cite[Ch.~9]{DZ2011}, if a functional $J$ is shape 
differentiable 
then $dJ(S,\bs{V})$ depends only on $\bs{v}\definedas\bs{V}(0)$ (that is, 
$J$ is Hadamard differentiable at 
$S$ in the direction $\bs{v}$).
\end{rmrk}
%%%
\begin{dfntn}[Shape derivative]\label{d:shape_der}
If $J$ is shape differentiable we call $dJ$  
its shape derivative.
\end{dfntn}
%%%
\begin{rmrk}[Taylor formula]\label{r:taylor}
Given $\bs{V} \in \mathcal{C}([0,T], V^k(\D))$ we can define
$S+\bs{V}$ to be $S_t$ for $t=1$ provided it is admissible.
Then if $J$ is shape differentiable (see \cite[Ch.~9]{DZ2011}) it follows that
$J(S + \bs{V}) = J(S)+d J(S; \bs{V}) + o(|\bs{V}|)$.
\end{rmrk}

\subsection{The Structure Theorem}\label{sec:structure_theorem}
%%%%%%%%%%%%%%%%%%%%%%%
One of the main results about shape derivatives is the 
(Hadamard-Zolesio) Structure Theorem (Theorem 3.6 of \cite[Ch.~9]{DZ2011}).
It establishes that, if a shape functional $J$ is shape differentiable at 
the domain $\Omega$ with boundary $\Gamma$, then the only relevant part of the 
velocity field $\bs{V}$ in $dJ(\Omega, \bs{V})$ 
is $v_n \definedas \bs{V}(0)\cdot \bs{n}|_\Gamma$. In other words, if 
$\bs{V}(0)\cdot \bs{n}=0 $ in $\Gamma$, then  $dJ(\Omega, \bs{V})=0$.
More precisely, 
\begin{thrm}[Structure Theorem]\label{t:structure}
Let $\Omega\in \Ad$ be a domain with $\mathcal{C}^{k+1}$-boundary 
$\Gamma$, 
$k\geq 0$ integer, and let $J:\Ad\rightarrow \R$ be a shape functional
which is shape differentiable at $\Omega$ with respect to $V^k(\D)$. Then 
there exists a functional $g(\Gamma)\in \left(\mathcal{C}^k(\Gamma)\right)'$ 
(called the shape gradient) such that
$dJ(\Omega, \bs{V}) =\langle g(\Gamma),v_n\rangle_{\mathcal{C}^k(\Gamma)}$,
where $v_n = \bs{V(0)}\cdot\bs{n}$.
Moreover, if the \textit{gradient} $g(\Gamma)\in L^1(\Gamma)$, then
$dJ(\Omega, \bs{V})=\int_\Gamma g(\Gamma)\,v_n$.
\end{thrm}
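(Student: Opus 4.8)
The plan is to invoke the Hadamard--Zolesio Structure Theorem in two stages: first to reduce the shape derivative to a distribution acting only on the normal component $v_n = \bs{V}(0)\cdot\bs{n}$ of the velocity, and then, under the additional integrability hypothesis, to identify that distribution with an $L^1$ density. Since the statement is essentially a citation of Theorem 3.6 of \cite[Ch.~9]{DZ2011}, my proof is really a careful unpacking of how that cited result applies in the present setting, so I would organize it as a verification of hypotheses followed by an invocation.

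First I would recall, via Remark~\ref{nt:had-dif}, that shape differentiability already forces $dJ(\Omega,\bs{V})$ to depend only on $\bs{v}=\bs{V}(0)$, so we may regard $\bs{v}\mapsto dJ(\Omega,\bs{v})$ as a linear continuous functional on $V^k(\D)$. The heart of the structure theorem is the claim that this functional annihilates every $\bs{v}$ that is tangent to $\Gamma$, i.e.\ $dJ(\Omega,\bs{v})=0$ whenever $\bs{v}\cdot\bs{n}=0$ on $\Gamma$. I would justify this by the standard argument that a purely tangential velocity generates, to first order, a flow that merely reparametrizes $\Gamma$ without moving it as a set, so the perturbed domains $\Omega_t$ agree with $\Omega$ up to an order-$t^2$ correction in the relevant sense, whence the Eulerian semiderivative \eqref{def:eulerian-der} vanishes. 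This is the genuinely substantive step and the one I would expect to be the main obstacle, since making "tangential flow does not move the set" precise requires the $\mathcal{C}^{k+1}$ regularity of $\Gamma$ to control the normal displacement and to apply the implicit-function/flow estimates; I would lean on the cited theorem for the technical details rather than reprove them.

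Granting that annihilation property, the functional descends to the quotient: since two velocities with the same normal trace $v_n$ differ by a tangential field, $dJ(\Omega,\bs{V})$ depends on $\bs{V}(0)$ only through $v_n=\bs{V}(0)\cdot\bs{n}|_\Gamma$. The continuity of $\bs{v}\mapsto dJ(\Omega,\bs{v})$ on $V^k(\D)$ together with the surjectivity of the normal-trace map onto $\mathcal{C}^k(\Gamma)$ (again using $\Gamma\in\mathcal{C}^{k+1}$) then yields a well-defined continuous linear functional $g(\Gamma)\in(\mathcal{C}^k(\Gamma))'$ with $dJ(\Omega,\bs{V})=\langle g(\Gamma),v_n\rangle_{\mathcal{C}^k(\Gamma)}$, which is the first assertion.

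Finally, for the moreover clause, I would argue that when the shape gradient $g(\Gamma)$ happens to be represented by an $L^1(\Gamma)$ function, the duality pairing against the continuous test function $v_n$ is simply the integral: $\langle g(\Gamma),v_n\rangle = \int_\Gamma g(\Gamma)\,v_n$. This is immediate from the identification of the $L^1$ density with the distribution it induces, since $\mathcal{C}^k(\Gamma)\subset\mathcal{C}(\Gamma)$ pairs with $L^1(\Gamma)$ exactly through the surface integral. Thus $dJ(\Omega,\bs{V})=\int_\Gamma g(\Gamma)\,v_n$, completing the statement.
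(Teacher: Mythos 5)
The paper offers no proof of this statement at all: it is quoted verbatim as Theorem 3.6 of \cite[Ch.~9]{DZ2011}, so your careful unpacking of the standard Hadamard--Zol\'esio argument (reduction to $\bs{v}=\bs{V}(0)$ via Remark~\ref{nt:had-dif}, annihilation of tangential velocities, descent to the quotient through the normal trace, and the $L^1$ identification) is consistent with, and more informative than, what the paper does. One small imprecision worth fixing: for an \emph{autonomous} field with $\bs{v}\cdot\bs{n}=0$ on $\Gamma$, uniqueness for the ODE \eqref{eq:autonomous trajectories} gives that the flow preserves $\Gamma$ (hence $\Omega_t=\Omega$) \emph{exactly}, so $J(\Omega_t)=J(\Omega)$ and the semiderivative vanishes with no further argument; your weaker ``agree up to order $t^2$'' version would require a continuity of $J$ with respect to some metric on sets that is not among the hypotheses. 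Since you defer the technical details to the cited theorem in any case, this does not invalidate the proposal.
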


%%%%%%%%%%%%%%%%%%%%%%%%%%%%%%%%%%%%%%%%%%%%%%%%%%%%
\section{Shape Derivatives of Domain Functions}\label{S:shape-domain}
%%%%%%%%%%%%%%%%%%%%%%%%%%%%%%%%%%%%%%%%%%%%%%%%%%%%
In this section and the following, we find specialized formulas
for the shape derivatives of domain and boundary functionals.
These, in turn, will induce definitions for the shape derivatives of domain and 
boundary functions.
%%   
% In order to motivate the definition, let compute the Eulerian derivative of a 
% functional of the form  $J(\Omega)=\int_\Omega y(\Omega) d\Omega$,
% where $y(\Omega)\in W^{r,p}(\Omega)$ ($0\le r \le k$) for any admissible 
% domain $\Omega$. 
% This will lead to the definition of shape derivative of the function 
% $y(\Omega)$, as in \cite{SZ1992}. 
    
\subsection{Shape differentiation of a domain functional}
%%%%
Consider a  velocity field $\bs{V}\in \mathcal{C}([0,\epsilon], V^k(\D))$,  
$k \geq 1$, with trajectories $x\in \mathcal{C}^1([0,\epsilon], V^k(\D))$ 
satisfying (\ref{def:trajectories}), and note that the Eulerian semiderivative 
(\ref{def:eulerian-der}) can be written as $d J(\Omega; 
\bs{V})=\frac{d}{dt^{+}}J(\Omega_t)|_{t=0}$, where $\Omega_t=x(t)(\Omega)$ and 
$\Omega_0=\Omega$.  Then, from a well known change of variables formula 
\cite[Ch.~9, Sec.~4.1]{DZ2011}, we have
\begin{equation}\label{eq:JOmegat-change}
J(\Omega_t)=\int_{\Omega_t}y(\Omega_t)\  d\Omega_t = \int_{\Omega} 
[y(\Omega_t)\circ x(t)]\,\gamma(t)\,d\Omega
\end{equation}
where $\gamma(t)\definedas \det Dx(t)$, with $Dx(t)$ denoting derivative 
with respect to the spatial variable $X$. Note that if $y(\Omega_t) \in
W^{r,p}(\Omega_t)$ for each $t$ then  $y(\Omega_t)\circ x(t)\in 
W^{r,p}(\Omega)$ (if $0\le r \le k$).
The following Lemma (Theorem 4.1 in Ch.~9, p.~482 of 
\cite{DZ2011}) provides some insight on the nature of $\dot\gamma$.
%%%
\begin{lmm}[Time derivative of $\gamma$]\label{le:der-gamma}
If $\bs{V} \in \mathcal{C}([0,\epsilon], V^1(\D))$ then 
$\gamma \in \mathcal{C}^1([0,\epsilon], V^1(\D)),$
and its (time) derivative is given by
$\dot{\gamma}(t) =\gamma(t) \left[\Div\bs{V}(t) \circ x(t)\right]$.
In particular, $\dot{\gamma}(0) = \Div\bs{v}$, where $\bs{v}=\bs{V}(0)$.
\end{lmm}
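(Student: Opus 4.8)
The plan is to prove the two claims in Lemma~\ref{le:der-gamma} by differentiating the defining relation $\gamma(t)=\det Dx(t)$ with respect to time and invoking Jacobi's formula for the derivative of a determinant. The main computation is the identity $\dot\gamma(t)=\gamma(t)\,[\Div\bs{V}(t)\circ x(t)]$; once this is established, evaluating at $t=0$ using $x(0)=id$, $\gamma(0)=\det I=1$, and $\bs{v}=\bs{V}(0)$ gives $\dot\gamma(0)=\Div\bs{v}$ immediately. The regularity statement $\gamma\in\mathcal{C}^1([0,\epsilon],V^1(\D))$ follows from the regularity of the trajectory $x\in\mathcal{C}^1([0,\epsilon],V^1(\D))$ guaranteed by the ODE system~\eqref{def:trajectories} together with the smoothness of the determinant map, so I would treat it as essentially inherited from the Cauchy--Lipschitz theory for the flow.

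For the key identity, first I would commute the spatial and temporal derivatives. Since $x\in\mathcal{C}^1([0,\epsilon],V^1(\D))$ is $\mathcal{C}^1$ in $t$ with values in a space of $\mathcal{C}^1$ spatial fields, the matrix-valued map $Dx(t)$ is itself $\mathcal{C}^1$ in $t$ and one may exchange $\partial_t$ with the spatial derivative $D$, so that $\frac{d}{dt}Dx(t)=D\dot x(t)$. Using the trajectory equation $\dot x(t)=\bs{V}(t)\circ x(t)$ and the chain rule, this becomes $D\dot x(t)=D(\bs{V}(t)\circ x(t))=\big(D\bs{V}(t)\circ x(t)\big)\,Dx(t)$. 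Next I would apply Jacobi's formula $\frac{d}{dt}\det A(t)=\det A(t)\,\tr\!\big(A(t)^{-1}\dot A(t)\big)$ with $A(t)=Dx(t)$; this is legitimate because $Dx(t)$ is invertible for $t$ small (indeed $Dx(0)=I$ and $\det Dx(t)=\gamma(t)$ stays positive near $t=0$, as $x(t)$ is a diffeomorphism generated by the flow).

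Substituting $\dot A(t)=\big(D\bs{V}(t)\circ x(t)\big)\,Dx(t)$ into Jacobi's formula, the two factors of $Dx(t)$ cancel inside the trace:
\begin{equation*}
\dot\gamma(t)=\gamma(t)\,\tr\!\Big(Dx(t)^{-1}\,(D\bs{V}(t)\circ x(t))\,Dx(t)\Big)
=\gamma(t)\,\tr\!\big(D\bs{V}(t)\circ x(t)\big),
\end{equation*}
using the cyclic invariance of the trace. Since $\tr(D\bs{V}(t))=\Div\bs{V}(t)$, this is precisely $\dot\gamma(t)=\gamma(t)\,[\Div\bs{V}(t)\circ x(t)]$, as desired.

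I expect the only genuine subtlety to lie in the justification of interchanging $\partial_t$ and $D$ and in verifying that $\gamma(t)$ stays nonzero (so that $Dx(t)^{-1}$ makes sense) on the time interval in question; both are standard consequences of the regularity of the flow and continuity of $\gamma$ with $\gamma(0)=1$, and a fully rigorous treatment appears in the cited reference~\cite[Ch.~9]{DZ2011}. The algebraic core---Jacobi's formula plus the cyclic cancellation of $Dx(t)$---is routine, so in the write-up I would state the commutation of derivatives, apply Jacobi's formula, cancel, and conclude, relegating the regularity bookkeeping to the citation.
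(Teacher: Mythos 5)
The paper does not prove this lemma at all: it is quoted verbatim as Theorem 4.1 of \cite[Ch.~9, p.~482]{DZ2011}, so there is no ``paper proof'' to compare against. Your argument is the standard Liouville/Jacobi computation and it is correct: $\frac{d}{dt}Dx(t)=D\dot x(t)=(D\bs{V}(t)\circ x(t))\,Dx(t)$, Jacobi's formula, and cyclic invariance of the trace give exactly $\dot\gamma(t)=\gamma(t)\,[\Div\bs{V}(t)\circ x(t)]$, and the evaluation at $t=0$ is immediate. Two minor remarks. First, if you want to avoid even discussing invertibility of $Dx(t)$, use the adjugate form of Jacobi's formula, $\frac{d}{dt}\det A=\mathrm{adj}(A)^T:\dot A$, which needs no inverse; otherwise your continuity argument ($\gamma(0)=1$ and $\gamma$ continuous, hence $\gamma>0$ for small $t$) is fine. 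Second, the regularity assertion $\gamma\in\mathcal{C}^1([0,\epsilon],V^1(\D))$ cannot literally follow from your argument (nor, read strictly, can it be true as stated): $\gamma(t)=\det Dx(t)$ consumes one spatial derivative of $x(t)$, so with $\bs{V}(t)\in V^1(\D)$ one only gets $\gamma(t)$ continuous in space, and moreover $\gamma\equiv 1$ outside the support of the velocity, so $\gamma(t)-1$, not $\gamma(t)$, is the compactly supported object. This is an imprecision in the statement as transcribed rather than a gap in your proof, and your decision to defer the functional-analytic bookkeeping to \cite{DZ2011} is reasonable; just do not claim that the $V^1$ membership ``follows'' from the determinant computation.
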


In order to motivate the definition of material and shape derivative, consider the following situation.
Let $y$ be a domain function which assigns a function $y(\Omega) \in W(\Omega)$ to each domain $\Omega$ in a class $\Ad$ of admissible smooth domains.
Suppose that the function $f:[0,\epsilon]\to L^1(\Omega)$ given by 
$f(t) = y(\Omega_t)\circ x(t)$
is differentiable at $t=0$ in $L^1(\Omega)$, that is, there exists 
$\dot{f}(0)\in L^1(\Omega)$ such that
$\lim_{t\searrow 0}\|\frac{f(t)-f(0)}{t}-\dot{f}(0)\|_{L^1(\Omega)}=0$.
Then we can differentiate inside the integral (\ref{eq:JOmegat-change}) to 
obtain
\begin{equation}\label{eq:derivada-adentro}
\frac{d}{dt}J(\Omega_t)|_{t=0}= \int_{\Omega} \dot{f}(0) \gamma(0) + 
f(0)\dot{\gamma}(0).
\end{equation}

Finally, using Lemma \ref{le:der-gamma}, and that $\gamma(0)=1$ and 
$f(0)=y(\Omega)$, we obtain
\begin{equation}
dJ(\Omega, \bs{V})= \int_\Omega \dot{f}(0) + y(\Omega) \Div\bs{v},
\end{equation}
where $\bs{v}=\bs{V}(0)$.
%    
% When $\dot{f}(0)$, the derivative in $t=0$ of the function  $f(t)= 
% y(\Omega_t)\circ x(t)$, exists, it is called the \emph{material derivative}. 
    
\subsection{Material and shape derivatives}

\begin{dfntn}[Material Derivative (Def.~2.71, 
p.~98 of~\cite{SZ1992})]\label{def:material-der}
Consider a velocity vector field $\bs{V}\in 
\mathcal{C}\left([0,\epsilon],V^k(\D)\right)$, with $k\geq 1$,
an admissible set $S\subset \D$ (domain or boundary) of class 
$\mathcal{C}^k$, and a function $y(S) \in W^{r, p}(S)$, 
with $r\in(0,k]\cap \mathbb{Z}$. Suppose there exists $y(S_t) \in W^{r, 
p}(S_t)$ for all $0<t<\epsilon$,
where $S_t=x(t)(S)$ is the perturbation set of $S$ by the trajectories 
$x(t)$ given by $\bs{V}$. 
The \emph{material derivative} of $y(S)$ at $S$ in the direction 
$\bs{V}$
is the function  $\dot{y}(S, \bs{V})\in W^{r-1, p}(S)$, given by 
\begin{equation}\label{def:eq:material-der}
\dot{y}(S, \bs{V}) = \frac{d}{dt^+} [y(S_t)\circ x(t)]_{t=0} 
  =  \lim_{t\searrow 0} \frac{y(S_t)\circ x(t) - y(S)}{t},
\end{equation}
whenever the limit exists in the sense of $W^{r-1, p}(S)$. 
In this case we say that the material derivative of $y(S)$ exists at $S$ 
in $W^{r-1, p}(S)$ in the direction $\bs{V}$.
We can replace the space $W^{r, p}(S)$ by $\mathcal{C}^r(\Omega)$, 
$1\leq r \leq k$, obtaining 
$\dot{y}(S, \bs{V})\in \mathcal{C}^{r-1}(\Omega)$.
\end{dfntn}
%%%%%%%%

With this definition, the existence of material derivative of $y(\Omega)\in 
W^{1,1}(\Omega)$ implies the differentiability of $f(t)$ at $t=0$ in 
$L^1(\Omega)$, which was the assumption needed for equation 
(\ref{eq:derivada-adentro}) to hold. Then, for $J(\Omega)=\int_{\Omega} 
y(\Omega)$,  we have
\begin{equation}\label{eq:first_integral_form}
dJ(\Omega, \bs{V}) = \int_\Omega \dot{y}(\Omega, \bs{V}) +  y(\Omega) 
\Div\bs{v}.
\end{equation}
   
\begin{rmrk}[Autonomous dependence]
If $J(\Omega)=\int_{\Omega} y(\Omega)$ is shape differentiable at 
$\Omega$ and 
$\dot{y}(\Omega, \bs{V})$ exists for any velocity $\bs{V}$, we obtain 
from Remark \ref{nt:had-dif} and equation (\ref{eq:first_integral_form})
that $\dot{y}(\Omega, \bs{V})=\dot{y}(\Omega, \bs{v})$, where 
$\bs{v}=\bs{V}(0)$.
\end{rmrk}
As a particular case, suppose that $y(\Omega)$ is independent of the 
geometry, namely: $y(\Omega)=\phi|_\Omega$, with 
$\phi\in W^{1,1}(\D)$. 
Then, by the chain rule,    
$\dot{y}(\Omega, \bs{V}) = \nabla \phi \cdot \bs{v}$, and we have
$
dJ(\Omega, \bs{V}) = \int_\Omega \nabla \phi \cdot \bs{v} +  \phi\, 
\Div\bs{v}
=\int_\Omega \Div\left(\phi\,\bs{v}\right)$.
If the boundary $\Gamma=\partial \Omega \in \mathcal{C}^1$ then, the 
Divergence Theorem yields
$d J(\Omega; \bs{V}) = \int_\Gamma \phi \, v_n\ d\Gamma$.    
This dependence of $dJ(\Omega,\bs{V})$ on $\bs{V}$ only 
trough the normal component of $\bs{v}$ in $\Gamma$ was expected by the 
Structure Theorem.
    
In the general case, when $y(\Omega)\in W^{1,1}(\Omega)$ depends on the 
geometry of $\Omega$, from
(\ref{eq:first_integral_form}) we can write
\begin{equation*}
dJ(\Omega, \bs{V}) = \int_\Omega \left[\dot{y}(\Omega, \bs{V}) - \nabla 
y(\Omega) \cdot \bs{v}\right] + \Div\left(y(\Omega)\bs{v}\right).
\end{equation*}
Which leads to the following definition of \emph{shape derivative of a 
domain function}.
\begin{dfntn}[Shape derivative of a domain 
function]\label{def:function-domain-shape-der}  
Given a velocity field $\bs{V}\in \mathcal{C}([0,\epsilon], V^k(\D))$, 
if there exists the material derivative of 
$y(\Omega)\in W^{r,p}(\Omega)$, with $1\leq r\leq k$, then the 
\emph{domain shape derivative} $y'(\Omega, \bs{V})\in W^{r-1,p}(\Omega)$  
is given by
\begin{equation}\label{def:eq:function-domain-shape-der}
y'(\Omega, \bs{V}) = \dot{y}(\Omega, \bs{V}) - \nabla y(\Omega)\cdot 
\bs{v},
\end{equation}
where $\bs{v}=\bs{V}(0)$.
\end{dfntn}
If $y'(\Omega,\bs{V}) \in W^{r-1,p}(\Omega)$ exists, then we have
\begin{equation}\label{eq:integral-domain-shape-der2}
d J(\Omega; \bs{V}) = \int_\Omega y'(\Omega, \bs{V}) + 
\Div\left(y(\Omega)\bs{V}(0)\right) 
=\int_\Omega y'(\Omega, \bs{V}) + \int_\Gamma y(\Omega)\,v_n,
\end{equation}
whenever $\Gamma$ is $ \mathcal{C}^1$. 
    
\section{Shape derivative of boundary functions}\label{S:shape-boundary}
%%%%%%%%%%%
Consider now a boundary functional of the form
$J(\Gamma)=\int_{\Gamma} z(\Gamma) d\Gamma$,
where $\Gamma =\partial \Omega$  is a boundary which is also a 
$\mathcal{C}^k$-manifold, and for each admissible $\Gamma$, $z(\Gamma)\in 
W^{r,p}(\Gamma)$, with $1\leq r \leq k$.
Below we derive a formula for the shape derivative $dJ(\Gamma, \bs{V})$ 
analogous to~(\ref{eq:integral-domain-shape-der2})
which uses the concepts from tangential calculus given in Section 
\ref{sec:tangential}.
\subsection{Shape differentiation of a boundary functional}
    
For $\Gamma_t\definedas x(t)(\Gamma)$, we have that
\begin{equation}\label{eq:J-en-Gamma-t}
J(\Gamma_t)=\int_{\Gamma_t} z(\Gamma_t) d\Gamma_t 
= \int_{\Gamma} [z(\Gamma_t)\circ x(t)]\, \omega(t)\, d\Gamma,
\end{equation}
where $\omega(t)= \|M(Dx(t))\bs{n}\|=\gamma(t)\|Dx(t)^{-T}\bs{n}\|$,
with $\gamma(t)=\det{Dx(t)}$ and $M(Dx(t))= \gamma(t)Dx(t)^{-T}$ is the 
cofactor matrix of the Jacobian $Dx(t)$. This well known change of variable 
formula can be found in Proposition 2.47 of~\cite[p.~78]{SZ1992}.
We can also find there a formula for the derivative of $\omega(t)$ at 
$t=0$, which we now cite using the notation of tangential divergence.
\begin{lmm}[Derivative of $\omega$ (Lemma 2.49, p. 80, of \cite{SZ1992})]
The mapping $t\rightarrow \omega(t)$ is differentiable from 
$[0,\epsilon]$ into $\mathcal{C}^{k-1}(\Gamma)$, 
and the derivative at $t=0$ is given by
\begin{equation*}
\dot{\omega}(0)=\Divg \bs{v},  
\end{equation*}
where  $\Divg \bs{v}=\Div \bs{v} - D\bs{v}\bs{n}\cdot\bs{n}$ is the 
tangential divergence of
$\bs{v}=\bs{V}(0)$.
\end{lmm}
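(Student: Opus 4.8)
The plan is to differentiate the factored expression $\omega(t)=\gamma(t)\,\|Dx(t)^{-T}\bs{n}\|$ from (\ref{eq:J-en-Gamma-t}) directly, using the product rule and reducing everything to the derivative of the trajectory Jacobian $Dx(t)$ at $t=0$. First I would record the values at $t=0$: since $x(0)=id$ we have $Dx(0)=I$, hence $\gamma(0)=\det Dx(0)=1$ and $\|Dx(0)^{-T}\bs{n}\|=\|\bs{n}\|=1$, so that $\omega(0)=1$ and, crucially, the norm factor stays bounded away from $0$ for small $t$, which makes $s\mapsto\sqrt{s}$ smooth along the relevant curve. The product rule then gives $\dot\omega(0)=\dot\gamma(0)\,\|Dx(0)^{-T}\bs{n}\| + \gamma(0)\,\frac{d}{dt}\|Dx(t)^{-T}\bs{n}\|\big|_{t=0}$, so it remains to evaluate the two pieces.

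For the first piece, Lemma \ref{le:der-gamma} supplies $\dot\gamma(0)=\Div\bs{v}$ immediately. For the second, I would compute $\frac{d}{dt}Dx(t)\big|_{t=0}$ by differentiating the defining ODE (\ref{def:trajectories}) in the spatial variable: from $\dot x(t)=\bs{V}(t)\circ x(t)$ one gets $\frac{d}{dt}Dx(t)=\big(D\bs{V}(t)\circ x(t)\big)\,Dx(t)$, which at $t=0$ collapses to $D\bs{v}$ because $Dx(0)=I$. Then the standard identity $\frac{d}{dt}A(t)^{-1}=-A(t)^{-1}\dot A(t)\, A(t)^{-1}$ (valid since $Dx(t)$ is invertible near $t=0$) yields $\frac{d}{dt}Dx(t)^{-1}\big|_{t=0}=-D\bs{v}$, and transposing gives $\frac{d}{dt}Dx(t)^{-T}\big|_{t=0}=-(D\bs{v})^T$.

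It then remains to differentiate the norm. Writing $\|A(t)\bs{n}\|^2=(A(t)\bs{n})\cdot(A(t)\bs{n})$ with $A(t)=Dx(t)^{-T}$, the chain rule gives $\frac{d}{dt}\|A(t)\bs{n}\|\big|_{t=0}=\frac{(A(0)\bs{n})\cdot(\dot A(0)\bs{n})}{\|A(0)\bs{n}\|}=\bs{n}\cdot\big(-(D\bs{v})^T\bs{n}\big)=-D\bs{v}\,\bs{n}\cdot\bs{n}$, where the last equality is simply the definition of the transpose. Combining the two pieces produces $\dot\omega(0)=\Div\bs{v}-D\bs{v}\,\bs{n}\cdot\bs{n}=\Divg\bs{v}$, which is the claim.

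The computation itself is routine matrix calculus; the step demanding genuine care is justifying that these pointwise-in-$x$ derivatives are actually derivatives in the $\mathcal{C}^{k-1}(\Gamma)$ topology asserted in the statement. This rests on $x\in\mathcal{C}^1([0,\epsilon],V^k(\D))$, which makes $t\mapsto Dx(t)$ continuously differentiable into $\mathcal{C}^{k-1}$, on the invertibility of $Dx(t)$ for small $t$ (so that $Dx(t)^{-T}$ is differentiable with bounded inverse), and on $\omega(0)=1>0$ (so that the square root defining the norm is differentiable along the curve). These are precisely the hypotheses underlying the cited Lemma 2.49 of \cite{SZ1992}, and I would invoke them rather than re-derive the function-space differentiability from scratch.
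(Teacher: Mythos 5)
The paper does not prove this lemma: it is quoted verbatim (modulo notation) from Lemma 2.49 of \cite{SZ1992}, so there is no in-paper argument to compare against. Your derivation is correct and is essentially the standard proof from that reference: product rule on $\omega(t)=\gamma(t)\,\|Dx(t)^{-T}\bs{n}\|$, Lemma \ref{le:der-gamma} for $\dot\gamma(0)=\Div\bs{v}$, the variational equation $\frac{d}{dt}Dx(t)\big|_{t=0}=D\bs{v}$, the inverse-and-transpose rule giving $-D\bs{v}^T$, and the transpose identity $\bs{n}\cdot D\bs{v}^T\bs{n}=D\bs{v}\,\bs{n}\cdot\bs{n}$, all legitimately upgraded to the $\mathcal{C}^{k-1}(\Gamma)$ topology by the regularity $x\in\mathcal{C}^1([0,\epsilon],V^k(\D))$ and the fact that $\omega(0)=1$ keeps the norm away from the square root's singularity.
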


    Assuming that the material derivative $\dot{z}(\Gamma, \bs{V})$, given by 
Definition \ref{def:material-der},  
    exists in $L^1(\Gamma)$, we can differentiate inside the integral in 
(\ref{eq:J-en-Gamma-t}) and then obtain
    \begin{equation*}
    dJ(\Gamma, \bs{V})=\int_\Gamma \dot{z}(\Gamma, \bs{V}) + 
z(\Gamma)\,\Divg\bs{v}.
    \end{equation*}
    Using the product rule formula $\Divg (z\bs{v})=z\,\Divg\bs{v}+\bs{v}\cdot 
\gradg z$ (Lemma \ref{le:DgProperties}) we can write
    \begin{equation*}
    dJ(\Gamma, \bs{V})=\int_\Gamma \left[\dot{z}(\Gamma, \bs{V})-\gradg 
z(\Gamma)\cdot \bs{v}\right] + \Divg(z(\Gamma)\bs{v}).
    \end{equation*}

We are now in position to introduce the concept of \emph{shape derivative 
of a boundary function}.

\begin{dfntn}[Shape Derivative of a boundary function] 
\label{def:shape-der-boundary-scalar}  
Let $z$ be a boundary function which satisfies  $z(\Gamma) \in 
W^{r,p}(\Gamma)$ for all $\Gamma$ in an admissible set $\mathcal{A}$ of 
boundaries of class $\mathcal{C}^k$.
If the material derivative $\dot z(\Gamma,\bs{V})$ exists in 
$W^{r-1,p}(\Gamma)$ (Definition~\ref{def:material-der}) for a velocity  
$\bs{V}\in  \mathcal{C}([0,\epsilon], V^k(\D)) $,
then the shape derivative $z'(\Gamma, \bs{V})\in W^{r-1,p}(\Gamma)$  is given by
$z'(\Gamma, \bs{V}) = \dot{z}(\Gamma, \bs{V}) - \gradg z(\Gamma)\cdot 
\bs{v}$, where $\bs{v}=\bs{V}(0)$.
\end{dfntn}
    
    Note that $\dot{z}(\Gamma, \bs{V})$ and $z'(\Gamma, \bs{V})$ are intrinsic 
to 
$\Gamma$, because they do not depend on any extension of $z(\Gamma)$ to an open 
set containing $\Gamma$. With this notation, for 
$J(\Gamma)=\int_{\Gamma}z(\Gamma) \, d\Gamma$ we have
    $ d J(\Gamma; \bs{V}) =\int_\Gamma z'(\Gamma, \bs{V}) + 
\Divg\left(z(\Gamma)\bs{v}\right)$.
  
    If $\Gamma\in\mathcal{C}^2$, then the tangential divergence formula 
(\ref{eq:tan-div-teo-boundary})  of Lemma \ref{le:div-teo} gives us the 
expression
    \begin{equation}\label{eq:integral-boundary-shape-der2}
    d J(\Gamma; \bs{V}) =\int_\Gamma z'(\Gamma, \bs{V}) + \kappa\, 
z(\Gamma)\,v_n ,
    \end{equation}
    where  $\kappa=\kappa(\Gamma)$ is the mean curvature function on $\Gamma$, 
and $v_n = \bs{v}\cdot \bs{n}$.

\begin{rmrk}\label{rem:surfaces-velocities}
For a surface $\Gamma \subset \partial \Omega$, we can consider the 
space of velocity fields 
$V_\Gamma^k(\D) = V^k(\D)\bigcap\{\bs{v}:\bs{v}|_{\partial \Gamma}=0\}$, 
in order to obtain
(\ref{eq:integral-boundary-shape-der2}) by applying formula 
(\ref{eq:tan-div-teo-surface}) of 
Lemma \ref{le:div-teo}. 
\end{rmrk}

\subsection{Relation between domain and boundary function}

Suppose that $z(\Gamma)$ is the restriction of a domain function $y(\Omega)$ to 
its boundary, that is: $z(\Gamma) = y(\Omega)|_\Gamma$, 
where $\partial \Omega = \Gamma $ and suppose that $y(\Omega)$ is shape 
differentiable at $\Omega$ in the direction $\bs{V}$. 
The \emph{material} derivatives of both $z(\Gamma)$ and $y(\Omega)$
are the same, as is established in the following Lemma.

\begin{lmm}[Proposition 2.75 of 
\cite{SZ1992}]\label{le:material-derivative-equal}
 Assume that the material derivative of a domain function $y(\Omega)\in 
W^{r,p}(\Omega)$ exists at the domain $\Omega$ of class $\mathcal{C}^k$ in the 
direction of a velocity 
 field $\bs{V}\in  \mathcal{C}([0,\epsilon], V^k(\D)) $, and that 
$\dot{y}(\Omega,\bs{V})\in W^{r-1,p}(\Omega)$. Then for $r>\frac{1}{p}+1$, 
there 
exists the material derivative of 
 $z(\Gamma)=y(\Omega)|_\Gamma$ at $\Gamma$ in the direction $\bs{V}$, and
 \begin{equation*}
 \dot{z}(\Gamma,\bs{V})= \dot{y}(\Omega,\bs{V})|_\Gamma\in 
W^{r-1-\frac{1}{p},p}(\Omega)
 \end{equation*}
\end{lmm}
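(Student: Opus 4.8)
The plan is to reduce the statement about the boundary material derivative to the already-assumed statement about the domain material derivative, exploiting the fact that restriction to $\Gamma$ commutes with composition by the trajectory $x(t)$. The key observation is that if $z(\Gamma)=y(\Omega)|_\Gamma$, then for the perturbed objects we also have $z(\Gamma_t)=y(\Omega_t)|_{\Gamma_t}$, because $\Gamma_t=x(t)(\Gamma)=\partial\Omega_t$ and restriction is intrinsic. Composing with the trajectory, $z(\Gamma_t)\circ x(t) = \bigl(y(\Omega_t)\circ x(t)\bigr)\big|_\Gamma$, since $x(t)$ maps $\Gamma$ onto $\Gamma_t$. This is the conceptual heart of the proof: the difference quotient defining $\dot z(\Gamma,\bs V)$ is literally the restriction to $\Gamma$ of the difference quotient defining $\dot y(\Omega,\bs V)$.

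First I would write out the two difference quotients side by side, namely
\begin{equation*}
\frac{z(\Gamma_t)\circ x(t) - z(\Gamma)}{t}
= \left.\left(\frac{y(\Omega_t)\circ x(t) - y(\Omega)}{t}\right)\right|_\Gamma,
\end{equation*}
and then pass to the limit as $t\searrow 0$. By hypothesis the quotient on the right converges to $\dot y(\Omega,\bs V)$ in $W^{r-1,p}(\Omega)$. The content of the proof is therefore to transfer this convergence through the restriction (trace) operator, and this is exactly where the regularity threshold $r>\frac1p+1$ enters. The main obstacle is that the trace operator is not continuous from $W^{s,p}(\Omega)$ to $W^{s-\frac1p,p}(\Gamma)$ for arbitrary $s$; one needs $s-\frac1p>0$, equivalently $s>\frac1p$. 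Applying this with $s=r-1$ gives the condition $r-1>\frac1p$, i.e. $r>\frac1p+1$, precisely the hypothesis stated in the Lemma, and produces a limit lying in $W^{r-1-\frac1p,p}(\Gamma)$.

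Concretely, I would invoke the boundedness of the trace operator $\operatorname{Tr}\colon W^{r-1,p}(\Omega)\to W^{r-1-\frac1p,p}(\Gamma)$, valid because $r-1>\frac1p$. Since $\operatorname{Tr}$ is linear and continuous, and the domain difference quotients converge to $\dot y(\Omega,\bs V)$ in $W^{r-1,p}(\Omega)$, their traces converge to $\operatorname{Tr}\bigl(\dot y(\Omega,\bs V)\bigr)=\dot y(\Omega,\bs V)|_\Gamma$ in $W^{r-1-\frac1p,p}(\Gamma)$. But those traces are exactly the boundary difference quotients, whose limit is by definition $\dot z(\Gamma,\bs V)$. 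Hence the boundary material derivative exists in the stated space and equals $\dot y(\Omega,\bs V)|_\Gamma$, which is the claim.

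The step I expect to be the genuine technical obstacle is the verification of the identity $z(\Gamma_t)\circ x(t)=\bigl(y(\Omega_t)\circ x(t)\bigr)|_\Gamma$ together with the measurability/regularity bookkeeping needed so that everything lives in the right Sobolev spaces along the way. This requires that $x(t)$ be a $\mathcal{C}^k$-diffeomorphism carrying $\Gamma$ diffeomorphically onto $\Gamma_t$ (guaranteed by $\bs V\in\mathcal{C}([0,\epsilon],V^k(\D))$ for $k\ge r$), and a careful use of the fact that pulling back by such a diffeomorphism preserves the relevant fractional Sobolev regularity on the boundary, so that the trace commutes with composition by $x(t)$. Everything else is a standard continuity-of-the-trace argument, so the bulk of the work is confirming that the trace theorem applies under exactly the hypothesis $r>\frac1p+1$ and that the restriction and composition operations genuinely commute.
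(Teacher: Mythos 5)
Your argument is correct and is exactly the standard one: the paper itself states this lemma as a citation to Proposition 2.75 of \cite{SZ1992} and offers no proof of its own, and the proof in that reference is precisely your reduction, namely that the boundary difference quotient is the trace of the domain difference quotient, followed by continuity of the trace operator $W^{r-1,p}(\Omega)\to W^{r-1-\frac1p,p}(\Gamma)$ under the hypothesis $r-1>\frac1p$. Nothing is missing; the identity $z(\Gamma_t)\circ x(t)=\bigl(y(\Omega_t)\circ x(t)\bigr)|_\Gamma$ that you flag as the technical heart holds pointwise because $x(t)$ carries $\Gamma$ onto $\Gamma_t=\partial\Omega_t$ and $z(\Gamma_t)$ is by definition the restriction of $y(\Omega_t)$.
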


However, the \emph{shape} derivatives of domain and boundary functions are not 
generally equal for the same function, as is shown in the next lemma. 

\begin{lmm}[Domain and boundary 
function]\label{le:domain-boundary-shapeder-scalar}
Consider a domain $\Omega$ with boundary $\Gamma$, and functions $z(\Gamma)$ 
and 
$y(\Omega)$ such that  $z(\Gamma) = y(\Omega)|_\Gamma$. Then, $z(\Gamma)$ is 
shape differentiable in $\Gamma$ at direction $\bs{V}\in  
\mathcal{C}([0,\epsilon], V^k(\D)) $ if %and only if 
$y(\Omega)$ is shape 
differentiable in $\Omega$ at direction $\bs{V}$, and
\begin{equation*}
z'(\Gamma, \bs{V})= y'(\Omega, \bs{V})|_\Gamma + \frac{\partial 
y(\Omega)}{\partial \bs{n}} \,v_n,\qquad\text{where $v_n = \bs{v}\cdot \bs{n}$.}
\end{equation*} 
\end{lmm}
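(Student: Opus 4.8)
The plan is to combine the two key definitions already established in the excerpt: the shape derivative of a domain function (Definition~\ref{def:function-domain-shape-der}) and the shape derivative of a boundary function (Definition~\ref{def:shape-der-boundary-scalar}), and to relate them using the fact that their \emph{material} derivatives coincide (Lemma~\ref{le:material-derivative-equal}). The whole argument should reduce to algebra once the two shape derivatives are written side by side; the geometric content lies entirely in comparing a full gradient $\nabla y(\Omega)$ with a tangential gradient $\gradg z(\Gamma)$.

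First I would write out both shape derivatives explicitly. By Definition~\ref{def:shape-der-boundary-scalar},
\begin{equation*}
z'(\Gamma,\bs{V}) = \dot z(\Gamma,\bs{V}) - \gradg z(\Gamma)\cdot \bs{v},
\end{equation*}
and by Definition~\ref{def:function-domain-shape-der}, restricted to $\Gamma$,
\begin{equation*}
y'(\Omega,\bs{V})|_\Gamma = \dot y(\Omega,\bs{V})|_\Gamma - \bigl(\nabla y(\Omega)\cdot\bs{v}\bigr)|_\Gamma.
\end{equation*}
Subtracting these and invoking Lemma~\ref{le:material-derivative-equal}, which gives $\dot z(\Gamma,\bs{V}) = \dot y(\Omega,\bs{V})|_\Gamma$, the material-derivative terms cancel and I am left with
\begin{equation*}
z'(\Gamma,\bs{V}) - y'(\Omega,\bs{V})|_\Gamma = \bigl(\nabla y(\Omega)\cdot\bs{v}\bigr)|_\Gamma - \gradg z(\Gamma)\cdot\bs{v}.
\end{equation*}
So everything now rests on analyzing the right-hand side.

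The key step is to decompose the ambient velocity $\bs{v}$ on $\Gamma$ into its tangential and normal parts, $\bs{v} = P\bs{v} + v_n\,\bs{n}$ with $v_n = \bs{v}\cdot\bs{n}$ and $P = I - \bs{n}\tensor\bs{n}$. For the tangential gradient I would use that $\gradg z = P\,\nabla y(\Omega)|_\Gamma$ (the definition of $\gradg$ via the projection $P$, since $z(\Gamma)=y(\Omega)|_\Gamma$ shares the same ambient extension), so that $\gradg z\cdot\bs{v} = (P\nabla y)\cdot\bs{v} = \nabla y\cdot(P\bs{v})$ using the symmetry of $P$. Meanwhile $\nabla y\cdot\bs{v} = \nabla y\cdot(P\bs{v}) + v_n\,(\nabla y\cdot\bs{n})$, and $\nabla y\cdot\bs{n} = \tfrac{\partial y(\Omega)}{\partial\bs{n}}$ is precisely the normal derivative. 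Substituting these into the displayed difference, the tangential contribution $\nabla y\cdot(P\bs{v})$ cancels against $\gradg z\cdot\bs{v}$, leaving exactly $\frac{\partial y(\Omega)}{\partial\bs{n}}\,v_n$, which is the claimed formula.

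The main obstacle, or at least the point needing care, is the identification $\gradg z(\Gamma) = P\,\nabla y(\Omega)|_\Gamma$: this requires that the intrinsic tangential gradient of the \emph{boundary} function $z(\Gamma)$ agree with the projected ambient gradient of the particular extension $y(\Omega)$. This is legitimate because $y(\Omega)$ is itself an admissible $\mathcal{C}^1$-extension of $z(\Gamma)$ off $\Gamma$, and by the remarks following Definition~\ref{def:tangential-derivative} the tangential gradient is independent of the chosen extension, so I may evaluate it using $y(\Omega)$. The only regularity hypothesis to track is that $y(\Omega)$ be differentiable enough near $\Gamma$ for $\nabla y(\Omega)|_\Gamma$ and $\frac{\partial y(\Omega)}{\partial\bs{n}}$ to make sense, which is exactly the setting where the material derivative of the restriction exists (the trace condition $r > \tfrac1p + 1$ in Lemma~\ref{le:material-derivative-equal}). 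Everything else is the routine vector algebra sketched above.
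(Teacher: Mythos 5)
Your proposal is correct and follows essentially the same route as the paper: both rest on Lemma~\ref{le:material-derivative-equal} to identify the material derivatives and then compare Definitions~\ref{def:function-domain-shape-der} and~\ref{def:shape-der-boundary-scalar} via the identity $\gradg z = \nabla y|_\Gamma - \frac{\partial y}{\partial \bs{n}}\bs{n}$ (which you phrase equivalently as $\gradg z = P\,\nabla y|_\Gamma$ together with the decomposition $\bs{v}=P\bs{v}+v_n\bs{n}$). The paper's proof is just a terser version of the same algebra.
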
   
\begin{proof}
 Since $y(\Omega)$ is an extension to $\Omega$ of $z(\Gamma)$, we have $\gradg 
z= \nabla y|_\Gamma-\frac{\partial y}{\partial n}\bs{n}$. 
 Then, using Lemma \ref{le:material-derivative-equal} and comparing Definitions 
\ref{def:function-domain-shape-der}  and \ref{def:shape-der-boundary-scalar}, 
we 
obtain the desired result.
\end{proof}

    Going back to formula (\ref{eq:integral-boundary-shape-der2}), if 
$z(\Gamma)$ admits an extension $y(\Omega)$,
    Lemma \ref{le:domain-boundary-shapeder-scalar} gives us
    \begin{equation*}
    d J(\Gamma; \bs{V}) =\int_\Gamma y'(\Omega, \bs{V})|_\Gamma + \left( 
\frac{\partial y(\Omega)}{\partial \bs{n}} +\kappa z(\Gamma)\right)v_n.
    \end{equation*}

    \section{Properties of shape derivatives of domain functions} 
\label{S:domain-properties}

    We first extend the Definition \ref{def:function-domain-shape-der} of 
domain 
shape derivative to vector and tensor valued functions.
    
    \begin{dfntn}[Vector and tensor valued domain 
functions]\label{def:domain-shape-der-vector}    
       Consider a vector field $\bs{w}(\Omega)\in W^{r,p}(\Omega, \Rn[N])$ 
which 
exists for all admissible domains $\Omega\in \Ad(\D)$. 
   For a given velocity field $\bs{V}$, we say that $\bs{w}(\Omega)$ 
   is shape differentiable at $\Omega$ in the direction of $\bs{V}$ if there 
exists the material derivative 
   $\dot{\bs{w}}(\Omega, \bs{V}) = \frac{d}{dt} [\bs{w}(\Omega_t)\circ 
x(t)]_{t=0}$ in $W^{r-1,p}(\Omega, \Rn[N])$.
   In that case the (domain) shape derivative  belongs to
   $W^{r-1,p}(\Omega, \Rn[N])$ and is given by
   \begin{equation*}
    \bs{w}'(\Omega, \bs{V})= \dot{\bs{w}}(\Omega, \bs{V}) - D\bs{w}(\Omega)\bs{v},
   \end{equation*}
where $\bs{v}=\bs{V(0)}$.   
If $N=1$, we consider $D\bs{w}=(\nabla \bs{w})^T$.

For a tensor valued function $A(\Omega):\Omega \rightarrow \Lin(\mathbb{V})$, 
we 
say that $A$ is shape differentiable at $\Omega$ in the direction of $\bs{V}$ 
if 
so is the vector valued function $A\bs{e}$, for any vector $\bs{e}\in 
\mathbb{V}$. The shape derivative $A'(\Omega, \bs{V})$ is the tensor valued 
function which satisfies
\begin{equation}\label{eq:def-tensorshape}
A'(\Omega, \bs{V})\bs{e} =(A\bs{e})'(\Omega, \bs{V})\ \ \ \text{for any }
\bs{e}\in \mathbb{V}.
\end{equation}
Throughout this paper we consider $\mathbb{V}=\Rn[N]$. 
\end{dfntn}

     \begin{lmm}\label{le:properties_domain}
     For a given admissible domain $\Omega \subset \D$ with boundary 
$\mathcal{C}^k$, $k\geq 2$, a velocity field $\bs{V}\in 
     \mathcal{C}([0,\epsilon], V^k(\D))$ and  a shape differentiable function 
$y(\Omega)\in W^{r,p}(\Omega)$, $1\leq r \leq k$, $1\leq p<\infty$,
     we have the following properties:
   \begin{enumerate}
   
   \item If the mapping $\bs{V}\rightarrow \dot{y}(\Omega, \bs{V})$ is 
continuous from $ \mathcal{C}([0,\epsilon], V^k(\D))$ into $W^{r-1,p}(\Omega)$, 
then $y'(\Omega, \bs{V})=y'(\Omega, \bs{v})$, where $\bs{v}=\bs{V}(0)$
  (Proposition 2.86 of \cite{SZ1992}).
  
   \item Suppose that $\bs{V}\rightarrow \dot{y}(\Omega, \bs{V})$ is continuous 
from $ \mathcal{C}([0,\epsilon], V^k(\D))$ into $W^{r-1,p}(\Omega)$. If the 
velocity fields $\bs{V}_1$ and $\bs{V}_2$ are such that $\bs{V}_1(0)\cdot 
\bs{n} 
= \bs{V}_2(0)\cdot \bs{n}$ on $\Gamma=\partial \Omega$, then $y'(\Omega, 
\bs{V_1})=y'(\Omega, \bs{V_2})$ (Proposition 2.87 of \cite{SZ1992}).
      \item  If $y(\Omega)=\phi|_\Omega$, for $\phi\in W^{r,p}(\D)$, then $y$ 
is 
shape differentiable in $\Omega$ for any direction $\bs{V}$, and $y'(\Omega, 
\bs{V})=0$ (Proposition 2.72 of \cite{SZ1992}).
      
   \end{enumerate}
\end{lmm}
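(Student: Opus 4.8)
The plan is to reduce all three statements to the defining relation of Definition~\ref{def:function-domain-shape-der}, namely $y'(\Omega,\bs V)=\dot y(\Omega,\bs V)-\nabla y(\Omega)\cdot\bs v$ with $\bs v=\bs V(0)$, together with the structure of the material derivative in Definition~\ref{def:material-der}. Each item corresponds to one of the cited propositions of \cite{SZ1992}; the work is to align those statements with the notation used here and to isolate the one genuinely analytic ingredient in each. I would treat item~3 first, since it is self-contained and fixes the computational conventions, and then bootstrap items~1 and~2 from the way $\bs v$ enters $y'$.

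For item~3, suppose $y(\Omega)=\phi|_\Omega$ with $\phi\in W^{r,p}(\D)$ independent of the domain. Then for every $t$ we have $y(\Omega_t)=\phi|_{\Omega_t}$, so that $y(\Omega_t)\circ x(t)=\phi\circ x(t)$ on $\Omega$. Differentiating this composition at $t=0$ and using $\dot x(0)=\bs V(0)\circ x(0)=\bs v$ from \eqref{def:trajectories}, the chain rule gives the material derivative $\dot y(\Omega,\bs V)=\nabla\phi\cdot\bs v$, which lies in $W^{r-1,p}(\Omega)$; in particular the material derivative exists for every $\bs V$. Since $\nabla y(\Omega)=\nabla\phi$ on $\Omega$, Definition~\ref{def:function-domain-shape-der} yields $y'(\Omega,\bs V)=\nabla\phi\cdot\bs v-\nabla\phi\cdot\bs v=0$. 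This is exactly Proposition~2.72 of \cite{SZ1992}.

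For item~1, I would split $y'(\Omega,\bs V)=\dot y(\Omega,\bs V)-\nabla y(\Omega)\cdot\bs v$ and observe that the convective term depends on $\bs V$ only through $\bs v=\bs V(0)$. Hence it suffices to prove that the material derivative itself is unchanged when $\bs V$ is replaced by the autonomous field equal to $\bs v$, i.e. $\dot y(\Omega,\bs V)=\dot y(\Omega,\bs v)$. This is the place where the continuity hypothesis is indispensable: I would introduce a one-parameter family of admissible velocities interpolating between $\bs V$ and the frozen field $\bs v$, all sharing the value $\bs v$ at $t=0$ and converging to the autonomous $\bs v$ in $\mathcal C([0,\epsilon],V^k(\D))$, and then pass to the limit in $\dot y$ using the assumed continuity of $\bs V\mapsto\dot y(\Omega,\bs V)$ into $W^{r-1,p}(\Omega)$. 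This reproduces Proposition~2.86 of \cite{SZ1992} and gives $y'(\Omega,\bs V)=y'(\Omega,\bs v)$.

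For item~2, by item~1 I may assume both velocities are autonomous, $\bs V_i\equiv\bs v_i$, and set $\bs w=\bs v_1-\bs v_2$, which by hypothesis satisfies $\bs w\cdot\bs n=0$ on $\Gamma$, i.e. $\bs w$ is tangential along the boundary. The claim reduces to $y'(\Omega,\bs v_1)=y'(\Omega,\bs v_2)$, and I would argue it by comparing the two flows: a velocity tangent to $\Gamma$ on $\Gamma$ keeps the boundary fixed to first order in $t$, so the two perturbed domains differ only by $o(t)$, and the extra convective contribution $-\nabla y\cdot\bs w$ present in the shape (as opposed to material) derivative is exactly what absorbs the tangential part. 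I expect this flow-comparison to be the main obstacle, since, unlike item~3, it cannot be settled by a single chain-rule computation and instead requires the careful first-order analysis of \cite{SZ1992} (their Proposition~2.87); the subtlety is that the material derivatives $\dot y(\Omega,\bs v_1)$ and $\dot y(\Omega,\bs v_2)$ genuinely differ, and it is only after subtracting the convective term that the two shape derivatives coincide.
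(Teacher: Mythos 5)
The paper does not actually prove this lemma: all three items are quoted from \cite{SZ1992} (Propositions 2.86, 2.87 and 2.72), so there is no internal proof to compare against. Your treatment of item 3 is correct and complete, and it coincides with the computation the paper itself carries out in Section 4 for the special case $y(\Omega)=\phi|_\Omega$: since $y(\Omega_t)\circ x(t)=\phi\circ x(t)$, the chain rule gives $\dot{y}(\Omega,\bs{V})=\nabla\phi\cdot\bs{v}$ and the convective term cancels it. The only point worth making explicit is that the difference quotient must converge in $W^{r-1,p}(\Omega)$, which for fixed $\phi\in W^{r,p}(\D)$ follows from density of smooth functions and the uniform bounds on $x(t)$ and its spatial derivatives.

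For items 1 and 2 you ultimately defer to \cite{SZ1992}, as the paper does; but the sketch you offer for item 1 does not close as stated. If your interpolating family converges to the autonomous field $\bs{v}$, continuity of $\bs{V}\mapsto\dot{y}(\Omega,\bs{V})$ only tells you that $\dot{y}$ evaluated along the family tends to $\dot{y}(\Omega,\bs{v})$; it says nothing about the value at the other endpoint, $\dot{y}(\Omega,\bs{V})$, which is what you need -- continuity does not imply constancy along the path. The argument must run the other way: construct $\bs{V}_n\to\bs{V}$ in $\mathcal{C}([0,\epsilon],V^k(\D))$ with $\bs{V}_n(t)=\bs{v}$ for $t\in[0,1/n]$ (possible because $\bs{V}$ is continuous at $t=0$); for each $n$ the flow of $\bs{V}_n$ coincides with the autonomous flow of $\bs{v}$ on $[0,1/n]$, so $\dot{y}(\Omega,\bs{V}_n)=\dot{y}(\Omega,\bs{v})$ holds exactly since the material derivative only sees $t\searrow 0$, and continuity then transports the identity to the limit $\bs{V}$. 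Likewise, in item 2 the heuristic that a tangential boundary velocity ``keeps the boundary fixed to first order'' ignores that $\bs{v}_1-\bs{v}_2$ is arbitrary in the interior of $\Omega$; the cancellation occurs between the interior transport hidden in $\dot{y}$ and the convective term $-\nabla y\cdot\bs{v}$, and it genuinely requires the analysis of Proposition 2.87 of \cite{SZ1992}, as you acknowledge.
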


   \begin{lmm}[Lemma 4 from \cite{HR2003}]\label{le:y_zero_Gamma}
   Suppose that $y(\Omega) \in H^{\frac{3}{2}+\epsilon}(\Omega)$ satisfies 
$y(\Omega)|_\Gamma = 0$ for all
domains $\Omega\in \mathcal{A}$ and that the shape derivative $y'(\Omega; 
\bs{V})$ exists in $H^{\frac{1}{2} +\epsilon}(\Omega)$ for some $\epsilon > 0$.
Then, we have
    \begin{equation}\label{eq:y_zero_Gamma}
     y'(\Omega, \bs{V})|_\Gamma =  -\frac{\partial y}{\partial\bs{n}}\, v_n 
    \end{equation}
    where $v_n=\bs{v}\cdot \bs{n}$ and $\bs{v}=\bs{V}(0)$.
    
   \end{lmm}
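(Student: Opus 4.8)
The plan is to recognize that the hypothesis $y(\Omega)|_\Gamma=0$ for \emph{all} admissible domains means that the associated boundary function $z(\Gamma)\definedas y(\Omega)|_\Gamma$ is identically zero, and then to exploit the relation between the domain and boundary shape derivatives already established in Lemma~\ref{le:domain-boundary-shapeder-scalar}.

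First I would set $z(\Gamma)=y(\Omega)|_\Gamma$ and observe that, since $z(\Gamma_t)=0$ for every $t$ in a neighborhood of $0$, the curve $t\mapsto z(\Gamma_t)\circ x(t)$ is the zero function; hence its material derivative vanishes, $\dot{z}(\Gamma,\bs{V})=0$. Because $z(\Gamma)\equiv 0$ we also have $\gradg z(\Gamma)=0$, so by Definition~\ref{def:shape-der-boundary-scalar} the boundary shape derivative is $z'(\Gamma,\bs{V})=\dot{z}(\Gamma,\bs{V})-\gradg z(\Gamma)\cdot\bs{v}=0$.

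Next I would invoke Lemma~\ref{le:domain-boundary-shapeder-scalar}, whose hypotheses are met here: $y(\Omega)$ is shape differentiable in the direction $\bs{V}$ (its shape derivative exists in $H^{1/2+\epsilon}(\Omega)$), and $z(\Gamma)=y(\Omega)|_\Gamma$. That lemma gives the identity $z'(\Gamma,\bs{V})=y'(\Omega,\bs{V})|_\Gamma+\frac{\partial y(\Omega)}{\partial\bs{n}}\,v_n$. Substituting $z'(\Gamma,\bs{V})=0$ from the previous step and solving for $y'(\Omega,\bs{V})|_\Gamma$ yields exactly~\eqref{eq:y_zero_Gamma}.

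The only delicate point is checking the regularity so that all the traces and the cited lemmas apply. With $p=2$ and $y(\Omega)\in H^{3/2+\epsilon}(\Omega)$ we have $r=3/2+\epsilon>\frac1p+1=\frac32$, so Lemma~\ref{le:material-derivative-equal} applies and the trace $z(\Gamma)$ together with its material derivative are well defined; likewise $y'(\Omega,\bs{V})\in H^{1/2+\epsilon}(\Omega)$ has a trace in $H^{\epsilon}(\Gamma)$, and $\nabla y|_\Gamma\in H^{\epsilon}(\Gamma)$, so both $\frac{\partial y}{\partial\bs{n}}\,v_n$ and the equality~\eqref{eq:y_zero_Gamma} make sense in $H^{\epsilon}(\Gamma)$. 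I expect this verification of the Sobolev exponents, rather than any genuinely new computation, to be the main (and essentially the only) obstacle; the heart of the argument is simply that a function which is the zero trace on every member of the family has vanishing boundary shape derivative.
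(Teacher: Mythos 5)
Your proposal is correct and follows essentially the same route as the paper's own proof: define $z(\Gamma)=y(\Omega)|_\Gamma$, observe that $z(\Gamma_t)\equiv 0$ forces $\dot z(\Gamma,\bs{V})=0$ and $\gradg z(\Gamma)=0$, hence $z'(\Gamma,\bs{V})=0$, and conclude via Lemma~\ref{le:domain-boundary-shapeder-scalar}. Your additional check of the Sobolev exponents is a reasonable supplement that the paper leaves implicit.
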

\begin{proof}
This Lemma is proved in  \cite{HR2003}. However, it can be also 
demonstrated if we consider the boundary function $z(\Gamma)\definedas 
y(\Omega)|_\Gamma$. In fact, by hypothesis, $z(\Gamma_t)\equiv 0$ for all small 
$t\geq0$. This gives us $\dot{z}(\Gamma, \bs{V})=0$ and $\gradg z(\Gamma)=0$, 
so 
that $z'(\Gamma, \bs{V})=0$. The claim thus follows from Lemma~\ref{le:domain-boundary-shapeder-scalar}.
\end{proof}

The following Lemma states that shape derivatives commute with linear transformations, both for domain and boundary functions. The proof is straightforward from the definitions.

\begin{lmm}\label{le:lineal-domain}
	Let $F\in \Lin(\V_1,\V_2)$, with $\V_1$ and $\V_2$ two finite dimensional vector or tensor spaces, and let $w(S)\in \mathcal{C}^k(S, \V_1)$ for any admissible domain or boundary 
	$S\subset \D$ and $k\geq 1$. If $w(S)$ is shape differentiable at $S$ in the direction $\bs{V}$, then $F\circ w(S) \in \mathcal{C}^k(S, \V_2)$ is also shape differentiable at $S$ in the direction $\bs{V}$, and its shape derivative is given by
	\begin{equation*}
	\left(F\circ w\right)'(S, \bs{V}) =  F\circ w'(S, \bs{V}).
	\end{equation*}
\end{lmm}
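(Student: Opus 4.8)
The plan is to prove both assertions—that $F\circ w$ is shape differentiable and that $(F\circ w)'=F\circ w'$—by establishing separately that the two ingredients of the shape derivative, namely the \emph{material} derivative and the first-order \emph{correction} term, each commute with $F$. The single structural fact I would exploit is that, since $\V_1$ and $\V_2$ are finite dimensional, $F$ is a bounded linear operator; hence $F$ commutes with limits, with the time derivative defining the material derivative, and with the (constant-coefficient) spatial and tangential differentiation operators appearing in the correction term. The tensor-valued case is then reduced to the vector-valued one by applying the resulting identity to $(F\circ w)\bs{e}$ for each fixed $\bs{e}$ and invoking the defining relation~\eqref{eq:def-tensorshape}.

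First I would treat the material derivative. Setting $u(t)\definedas w(S_t)\circ x(t)$, composition distributes over $F$, so that $(F\circ w)(S_t)\circ x(t)=F\circ u(t)$ pointwise on $S$. The difference quotients obey $\tfrac1t\bigl(F\circ u(t)-F\circ u(0)\bigr)=F\circ\bigl(\tfrac1t(u(t)-u(0))\bigr)$, and since $F$ is continuous it passes through the limit defining the material derivative in $W^{r-1,p}$ (or $\mathcal{C}^{r-1}$). Therefore $F\circ w$ admits a material derivative and $\dot{\overline{(F\circ w)}}(S,\bs{V})=F\circ\dot{w}(S,\bs{V})$.

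Next I would handle the correction term, using in both geometric settings that $F$ is position independent. For a domain $S=\Omega$, the chain rule with $DF\equiv F$ gives $D(F\circ w)=F\circ Dw$ pointwise, hence $D(F\circ w)\bs{v}=F\circ(Dw\,\bs{v})$. For a boundary $S=\Gamma$, I would compute tangential derivatives through the canonical extension of Lemma~\ref{le:canonical-ext}: the canonical extension of $F\circ w$ is $F\circ(w\circ p)$, and because $F$ is constant, $\gradg(F\circ w)=F\circ\gradg w$ and $\Dg(F\circ w)=F\circ\Dg w$. Thus in either case the correction term (the $\gradg z(\Gamma)\cdot\bs{v}$ of Definition~\ref{def:shape-der-boundary-scalar} and its vector analogue $\Dg w\,\bs{v}$, or the $Dw\,\bs{v}$ of the domain definition) factors through $F$. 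Subtracting this from the material-derivative identity and using the linearity of $F$ yields $(F\circ w)'(S,\bs{V})=F\circ\dot{w}-F\circ[\text{correction}]=F\circ\bigl(\dot{w}-[\text{correction}]\bigr)=F\circ w'(S,\bs{V})$.

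There is no real obstacle here: every step is a direct consequence of the linearity and continuity of $F$ together with the definitions of the material and shape derivatives, which is why the lemma is stated as straightforward. The only point that deserves a moment of care is the boundary vector/tensor case, where one must confirm that the correction term is of the form $\Dg w\,\bs{v}$, so that commuting $F$ past $\Dg$ suffices; this is precisely guaranteed by the canonical-extension identities of Lemma~\ref{le:canonical-ext}.
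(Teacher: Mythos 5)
Your proposal is correct, and it follows exactly the route the paper intends: the paper omits the proof entirely, stating only that it is ``straightforward from the definitions,'' and your argument is precisely the fleshed-out version of that claim --- linearity and continuity of $F$ (automatic in finite dimensions) let it pass through the limit defining the material derivative and through the (tangential) differentiation in the correction term, after which subtraction gives the identity. The reduction of the tensor-valued case to the vector-valued one via~\eqref{eq:def-tensorshape} is also the mechanism the paper itself uses elsewhere, so there is nothing to add.
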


The next lemma states a chain rule combining usual derivatives with shape derivatives.

%The shape derivative of a more general domain function $y(\Omega):\Omega\subset \Rn[N]  \rightarrow \V_1$, 
%is given by 
%$y'(\Omega, \bs{V})=\dot{y}(\Omega, \bs{V}) - Dy(\Omega)[\bs{v}]$, where $Dy(\Omega):\Omega \rightarrow  \Lin(\Rn[N], \V_1)$.
%We can define the shape derivative of a boundary function $z(\Gamma):\Gamma \rightarrow \V_1$  
%using Lemma \ref{le:domain-boundary-shapeder-scalar},
%and its generalization Lemma \ref{le:domain-boundary-shapeder-vector},  provided an extension 
%of $z(\Gamma)$ to $\Omega$. Moreover, the chain rule, stated for domain functions in the following Lemma, 
%will remain true for boundary functions.  

\begin{lmm}[Chain rule]\label{le:chain-rule}
	Consider two finite dimensional vector or tensor spaces $\V_1$ and $\V_2$, a function $F\in \mathcal{C}^1(\V_1,\V_2)$ and 
	a domain (or boundary) function $y(S) \in \mathcal{C}^1(S, \V_1)$, where $S$ is an admissible domain (boundary) in $\D \subset \R^N$ with a $\mathcal{C}^1$ boundary.  
	If $y(\Omega)$ is shape differentiable at $\Omega$ in the direction $\bs{V}$,	
	then the function $F\circ 
	y(\Omega)\in \mathcal{C}^1(\Omega, \V_2)$ is also shape differentiable at $\Omega$ 
	in the direction $\bs{V}$, and its shape derivative is given by
	\begin{equation*}
	\left(F\circ y\right)'(\Omega, \bs{V}) =DF\circ y(\Omega)\, [ y'(S, \bs{V})]. 
	\end{equation*}  
\end{lmm}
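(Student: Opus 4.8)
The plan is to reduce the statement to two elementary chain rules applied separately, one for the material derivative and one for the spatial (resp.\ tangential) derivative, and then to combine them using the linearity of $DF\circ y(\Omega)$. The observation that makes everything work is that $F$ acts pointwise on the \emph{values} of $y$ and does not see the underlying point, so composition with $F$ commutes with the pullback by the flow: $\bigl(F\circ y(S_t)\bigr)\circ x(t) = F\circ\bigl(y(S_t)\circ x(t)\bigr)$ at every point.

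First I would treat the material derivative. Writing $h(t)\definedas y(S_t)\circ x(t)$, the existence of $\dot y(S,\bs{V})$ means precisely that $h$ is differentiable at $t=0$ (in $W^{r-1,p}$ or $\mathcal{C}^{r-1}$) with $h(0)=y(S)$ and $h'(0)=\dot y(S,\bs{V})$. By the commutation identity above, the material derivative of $F\circ y$ is the $t$-derivative at $0$ of $F\circ h(t)$. Since $F\in\mathcal{C}^1(\V_1,\V_2)$ and $\V_1,\V_2$ are finite dimensional, the classical chain rule gives pointwise $\frac{d}{dt}F(h(t))\big|_{t=0}=DF(y(S))[\dot y(S,\bs{V})]$, that is $\dot{(F\circ y)}(S,\bs{V})=DF\circ y(S)\,[\dot y(S,\bs{V})]$.

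Next I would incorporate the spatial term. In the domain case, Definition~\ref{def:function-domain-shape-der} (with its vector/tensor version in Definition~\ref{def:domain-shape-der-vector}) gives $(F\circ y)'=\dot{(F\circ y)}-D(F\circ y)\,\bs{v}$, while the ordinary chain rule yields $D(F\circ y)\,\bs{v}=DF\circ y\,[Dy\,\bs{v}]$. Subtracting and using the \emph{linearity} of $DF\circ y(\Omega)$ produces $(F\circ y)'=DF\circ y\,[\dot y-Dy\,\bs{v}]=DF\circ y\,[y'(\Omega,\bs{V})]$, as claimed. In the boundary case the same argument works once $D$ is replaced by the tangential derivative $\Dg$: since the canonical extension of $F\circ y$ is $(F\circ y)\circ p=F\circ(y\circ p)$, formula~\eqref{eq:tan-der-def3} of Lemma~\ref{le:canonical-ext} together with the classical chain rule gives $\Dg(F\circ y)=DF\circ y\,\Dg y$ on $\Gamma$, so that the term subtracted in Definition~\ref{def:shape-der-boundary-scalar} (namely $\gradg z\cdot\bs{v}$ in the scalar case, $\Dg\bs{w}\,\bs{v}$ in the vector case) again factors through $DF\circ y$.

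The only genuinely technical point, and the step I expect to require the most care, is the functional-analytic justification that the pointwise chain rule for $F$ lifts to the relevant function space: one must verify that $t\mapsto F\circ h(t)$ is differentiable at $0$ in $W^{r-1,p}$ (or $\mathcal{C}^{r-1}$) and that the limit defining the material derivative may be taken after composing with $F$. This is exactly where the hypotheses $F\in\mathcal{C}^1$ and $y(S)\in\mathcal{C}^1(S,\V_1)$ are used, through the continuity of $DF\circ h(t)$ in $t$ and the uniform convergence of the difference quotients on the compact range of the values of $y$. Everything else is bookkeeping driven by the linearity of $DF\circ y$.
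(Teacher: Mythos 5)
Your proposal is correct and follows essentially the same route as the paper: reduce to the chain rule for the material derivative via the pointwise commutation of $F$ with the pullback by the flow, then absorb the spatial (resp.\ tangential) correction term using the ordinary chain rule and the linearity of $DF\circ y$. If anything, you are slightly more explicit than the paper on the two points it leaves to the reader, namely the functional-analytic justification that the pointwise chain rule lifts to the $W^{r-1,p}$ (or $\mathcal{C}^{r-1}$) limit and the adaptation to the boundary case via the canonical extension.
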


\begin{proof}
Since $F$ is differentiable, for every $X\in\V_1$ there exists a linear operator $DF(X) \in \Lin(\V_1,\V_2)$ such that
 	\begin{equation*}
 	\lim_{\|u\|_{\V_1}\rightarrow 0} \frac{\|F(X+u)-F(X) -DF(X)[u]\|_{\V_2}}{\|u\|_{\V_1}}=0;
 	\end{equation*}
 	where $DF(X)[u]$ denotes the application of the linear operator $DF(X)\in\Lin(\V_1,\V_2)$ to $u \in \V_1$.
 	With this notation, the chain rule applied to $F \circ y$ reads $D(F\circ y)[\bs{v}]=DF\circ y[Dy[\bs{v}]]$ ($\forall \bs{v} \in \R^N$), so that from Definition~\ref{def:domain-shape-der-vector},
	 \[
	 (F\circ y)'(\Omega, \bs{V}) =(F\circ y)^\cdotp(\Omega, \bs{V})- D(F\circ y)[\bs{V}]
	 = (F\circ y)^\cdotp(\Omega, \bs{V})- DF\circ y[Dy[\bs{V}]] , \quad \text{in $\Omega$},
	 \] with $(F\circ y)^\cdotp(\Omega, \bs{V})$ 
	 denoting the material derivative of $F\circ y$.
	 Then we only need to prove the chain rule for the material derivative of $y(\Omega)\in\mathcal{C}^1(\Omega, \V_1)$ 
	 in the direction $\bs{V}$, i.e.,
	 \[
	 (F\circ y)^\cdotp(\Omega, \bs{V}) = DF \circ y[\dot y(\Omega,\bs{V})]. 
	 \]
	 If we recall that $(F\circ y)^\cdotp(\Omega, \bs{V}) = \frac{d}{dt^+} \left[ F\circ y(\Omega_t)\circ x(t)\right]_{t=0}$,
	 the proof of this last equality is straightforward from usual chain rule applied to the mapping $t \to F\circ \left(y(\Omega_t)\circ x(t)\right)$. The details are left to the reader.
\end{proof}

\begin{rmrk}[Product rule for shape derivatives]\label{R:product shape derivatives}
The product rules for domain shape derivatives follow directly from 
Definitions \ref{def:function-domain-shape-der} and 
\ref{def:domain-shape-der-vector}.
\end{rmrk}   

The following Lemma allows us to swap shape derivatives with classical 
derivatives of domain functions. 
It is worth noting that this is not true for boundary functions and tangential derivatives. 
This will be discussed in Section~\ref{sec:main}, where the main results of 
this 
article are presented.

    \begin{lmm}[Mixed shape and classical derivatives]\label{le:conmute-der}
    The following results about interchanging classical and shape 
    derivatives are satisfied.
    \begin{enumerate}
    \item     
    If $y(\Omega)\in \mathcal{C}^2(\Omega)$  is shape differentiable at 
$\Omega$ in the direction 
    $\bs{V}\in  \mathcal{C}([0,\epsilon], V^k(\D)) $, $k\geq 2$, 
    then $\nabla y(\Omega)\in  \mathcal{C}^1(\Omega, \Rn[N])$ 
    is also shape differentiable at $\Omega$ and 
    \begin{equation*}
    (\nabla y)'(\Omega, \bs{V}) = \nabla y'(\Omega, \bs{V}) .
    \end{equation*}

    \item 
    If $\bs{w}(\Omega)\in \mathcal{C}^2(\Omega, \Rn[N])$  is shape 
differentiable at $\Omega$ in the
    direction $\bs{V}\in  \mathcal{C}([0,\epsilon], V^k(\D)) $, 
$k\geq 2$, 
    then $D \bs{w}(\Omega)\in  \mathcal{C}^1(\Omega, 
\mathbb{R}^{N\times N})$ and $\Div \bs{w}(\Omega)\in  \mathcal{C}^1(\Omega)$ 
    is also shape differentiable and 
    \begin{equation*}
    (D \bs{w})'(\Omega, \bs{V}) = D \bs{w}'(\Omega, \bs{V}),
    \qquad
    (\Div \bs{w})'(\Omega, \bs{V}) = \Div \bs{w}'(\Omega, \bs{V}). 
    \end{equation*}
%    \item
%    If $\bs{w}(\Omega)\in \mathcal{C}^2(\Omega, \Rn[N])$  is shape 
%differentiable at $\Omega$ 
%    in the direction $\bs{V}\in  \mathcal{C}([0,\epsilon], V^k(\D)) $, 
%$k\geq 2$,
%    then $\Div \bs{w}(\Omega)\in  \mathcal{C}^1(\Omega)$ 
%    is also shape differentiable and 
%    \begin{equation*}
%    (\Div \bs{w})'(\Omega, \bs{V}) = \Div \bs{w}'(\Omega, \bs{V}). 
%    \end{equation*}
    \item     
    If $y(\Omega)\in \mathcal{C}^3(\Omega)$  is shape differentiable 
at $\Omega$ 
    in the direction $\bs{V}\in  \mathcal{C}([0,\epsilon], V^k(\D)) 
$, $k\geq 3$, 
    then $\Delta y(\Omega)\in  \mathcal{C}^1(\Omega)$ 
    is also shape differentiable at $\Omega$ and 
    \begin{equation*}
    (\Delta y)'(\Omega, \bs{V}) = \Delta y'(\Omega, \bs{V}) .
    \end{equation*}

    \end{enumerate}
    \end{lmm}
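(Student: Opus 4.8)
The plan is to prove each of the three interchange formulas by reducing higher-order statements to the first one and exploiting that classical (full) derivatives commute with material derivatives, unlike tangential derivatives. The essential mechanism is the definition of the domain shape derivative, $y'(\Omega,\bs{V})=\dot y(\Omega,\bs{V})-\nabla y\cdot\bs{v}$ (Definition~\ref{def:function-domain-shape-der}), combined with the fact that the material derivative is a derivative along trajectories, so it commutes with spatial differentiation up to a correction term coming from the $t$-dependence of $x(t)$. First I would establish item~(1) from scratch; items~(2) and~(3) then follow by applying~(1) componentwise and using linearity of the shape derivative (Lemma~\ref{le:lineal-domain}), since $D\bs{w}$, $\Div\bs{w}=\tr D\bs{w}$, and $\Delta y=\Div\nabla y$ are all obtained by composing $\nabla$ with linear trace/contraction operations.

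For item~(1), the key step is a commutator identity for the material derivative: I would compute $\dot{(\nabla y)}(\Omega,\bs{V})$ directly from the definition $\frac{d}{dt^+}[\nabla y(\Omega_t)\circ x(t)]_{t=0}$. Writing $g(t,X)=\nabla y(\Omega_t)(x(t)(X))$ and differentiating in $t$, the chain rule produces a term $(\nabla\dot y)$-like contribution together with a term involving $D^2y\,\dot x=D^2y\,\bs{v}$ at $t=0$ (using $x(0)=\mathrm{id}$ and $\dot x(0)=\bs{v}$). On the other hand, differentiating the scalar material derivative relation and applying $\nabla$ requires care because $\nabla(\dot y)$ and $(\nabla y)^{\cdotp}$ differ precisely by $D^2y\,\bs{v}$ (a $D(\nabla y)\bs{v}$ correction), which is exactly the spatial-derivative term appearing in the vector shape-derivative formula of Definition~\ref{def:domain-shape-der-vector}. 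These two $D^2y\,\bs{v}$ contributions cancel when I subtract $D(\nabla y)\,\bs{v}$ to form the shape derivative, leaving $(\nabla y)'=\nabla\dot y-\nabla(\nabla y\cdot\bs{v})+\text{(canceled terms)}$; after collecting, the Hessian terms drop and I am left with $\nabla(\dot y-\nabla y\cdot\bs{v})=\nabla y'$. The regularity bookkeeping ($y\in\mathcal{C}^2$ so $\nabla y\in\mathcal{C}^1$, and $k\ge 2$ so the material derivative of $\nabla y$ exists in the appropriate space) guarantees every term is well defined.

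For item~(2), I would apply item~(1) to each scalar component $w_i$ of $\bs{w}$, noting $(D\bs{w})_{ij}=\partial_j w_i=(\nabla w_i)_j$, so $(D\bs{w})'=D\bs{w}'$ follows row by row; then $\Div\bs{w}=\tr D\bs{w}$ is a fixed linear functional of $D\bs{w}$, so $(\Div\bs{w})'=\tr(D\bs{w})'=\tr D\bs{w}'=\Div\bs{w}'$ by Lemma~\ref{le:lineal-domain}. For item~(3), since $y\in\mathcal{C}^3$ and $k\ge 3$, I apply item~(1) to $y$ to get $(\nabla y)'=\nabla y'$ with $\nabla y\in\mathcal{C}^2$, then apply item~(2) to the vector field $\bs{w}=\nabla y$ to get $(\Div\nabla y)'=\Div(\nabla y)'=\Div\nabla y'$, i.e.\ $(\Delta y)'=\Delta y'$.

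The main obstacle is the honest verification of the commutator step in item~(1): one must track exactly how differentiating the composition $\nabla y(\Omega_t)\circ x(t)$ in time interacts with the spatial gradient, and confirm that the extra Hessian term $D^2y\,\bs{v}$ produced by the moving-frame derivative coincides with (and cancels against) the correction term $D(\nabla y)\,\bs{v}$ subtracted in the vector shape-derivative definition. This is where the asymmetry with boundary/tangential derivatives originates (and why the analogous statement fails for $\gradg$, as flagged in the remark preceding the lemma): the full gradient $\nabla$ commutes with the ambient flow in a way the projected tangential gradient does not. Once this single cancellation is established, the rest is routine linearity and composition.
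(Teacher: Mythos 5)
Your overall architecture matches the paper's: prove item (1) by a commutator computation, then obtain (2) and (3) componentwise via linearity (the paper simply says the other items are ``analogous,'' and your reduction of $D\bs{w}$, $\Div\bs{w}=\tr D\bs{w}$ and $\Delta y=\Div\nabla y$ to item (1) plus Lemma~\ref{le:lineal-domain} is a perfectly good way to fill that in). The problem is in the central step of item (1), where you misidentify the commutator. The correct key identity, which the paper derives from the pullback formula $\nabla\left(y(\Omega_t)\circ x(t)\right)=Dx(t)^T\,\nabla y(\Omega_t)\circ x(t)$ by differentiating in $t$ at $t=0$ and swapping the $t$- and $X$-derivatives on the left (justified by uniform convergence), is
\begin{equation*}
\nabla \dot{y}(\Omega,\bs{V}) = D\bs{v}^T\nabla y(\Omega) + \dot{\overline{\nabla y}}(\Omega,\bs{V}),
\end{equation*}
i.e.\ the material derivative and the gradient fail to commute by the term $D\bs{v}^T\nabla y$, \emph{not} by $D^2y\,\bs{v}$ as you claim. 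Two separate cancellations are then needed: the $D\bs{v}^T\nabla y$ term above cancels against the same term produced by expanding $\nabla(\nabla y\cdot\bs{v})$ in $\nabla y'=\nabla\dot y-\nabla(\nabla y\cdot\bs{v})$, while the remaining $(D^2y)^T\bs{v}$ from that expansion matches the $D(\nabla y)\bs{v}=D^2y\,\bs{v}$ subtracted in Definition~\ref{def:domain-shape-der-vector} only because $D^2y$ is symmetric ($y\in\mathcal{C}^2$). Your write-up accounts only for the second, essentially tautological, matching ($D^2y\,\bs{v}$ against $D(\nabla y)\bs{v}$, which are literally the same object) and omits both the $D\bs{v}^T\nabla y$ commutator and the role of Hessian symmetry.

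Moreover, the computation you propose to produce the commutator --- differentiating $g(t,X)=\nabla y(\Omega_t)(x(t)(X))$ by the chain rule into a ``partial time derivative'' piece plus $D^2y\,\dot{x}$ --- is circular: splitting off a partial $t$-derivative of $\nabla y(\Omega_t)$ presupposes a space--time extension of the domain-dependent field, which is exactly what the shape derivative is meant to encode; at best this reproduces the definition $(\nabla y)'=\dot{\overline{\nabla y}}-D^2y\,\bs{v}$ and yields no information relating $\dot{\overline{\nabla y}}$ to $\nabla\dot y$. To close the argument you need the pullback identity above (how gradients transform under $x(t)$) together with the interchange of $\partial_t$ and $\nabla_X$ on $y(\Omega_t)\circ x(t)$; as written, that step is missing.
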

    \begin{proof}
    We will prove the first assertion. The other ones are analogous.
    
    First note that
    \begin{equation*}
           \nabla \left(y(\Omega_t)\circ x(t)\right) =Dx(t)^T\, \nabla 
y(\Omega_t)\circ x(t) \ \ \ \text{ in } \Omega.
        \end{equation*}
Differentiating with respect to $t$ and evaluating at $t=0$ we have
     \begin{equation}\label{eq:sweping-derivatives}
      \frac{\partial}{\partial t} \nabla \left(y(\Omega_t)\circ x(t)\right) 
\, |_{t=0}= \frac{\partial}{\partial t}Dx(t)^T|_{t=0}\nabla y(\Omega) + 
      \dot{\nabla} y(\Omega, \bs{V})
     \end{equation}
where we have used that $x(0)=id$, $Dx(0)=I$ and Definition 
\ref{def:material-der}, denoting with $\dot{\nabla} y(\Omega, \bs{V})$
the material derivative of $\nabla y(\Omega)$.

Since  $x \in \mathcal{C}^1([0,\epsilon], 
V^k(\D))$, $k\geq 1$, and recalling that $\dot{x}(0) = 
\frac{\partial}{\partial 
t}x(t, \cdot)|_{t=0} =\bs{v}$,
we have that, uniformly
\begin{equation*}
\lim_{t\searrow 0}\,  D^\alpha\left(\frac{ x(t)- x(0)}{t}\right) 
=
%\stackrel{\mathcal{C}(\Omega)}{=}
D^\alpha \bs{v} \qquad \text{ for any } 0\leq |\alpha|\leq k,
\end{equation*}
so that
\begin{equation}\label{eq:swap-der-x}
\frac{\partial}{\partial t}Dx(t)|_{t=0} = D\bs{v} \qquad \text{ pointwise in } 
\D.
\end{equation}

Analogously, the existence of the material derivative $\dot{y}(\Omega, \bs{V})$ 
in $\mathcal{C}^1(\Omega)$
implies that, uniformly,
\begin{equation*}
\lim_{t\searrow 0}\frac{\partial}{\partial X_i}\, \left( \frac{ 
y(\Omega_t)\circ 
x(t)-y(\Omega)}{t}\right) =
%\stackrel{\mathcal{C}(\Omega)}{=} 
\frac{\partial}{\partial X_i}\, \dot{y}(\Omega, \bs{V}), \qquad \text{ for } 
1\leq i \leq N,
\end{equation*} 
and then
\begin{equation}\label{eq:swap-der-y}
\frac{\partial}{\partial t} \nabla \left(y(\Omega_t)\circ x(t)\right) \, 
|_{t=0} 
= \nabla\dot{y}(\Omega, \bs{V})
\qquad \text{ pointwise in } \Omega.
\end{equation}

 Replacing with (\ref{eq:swap-der-x}) and (\ref{eq:swap-der-y}) in 
(\ref{eq:sweping-derivatives}), we obtain
\begin{equation*}
 \nabla\dot{y}(\Omega, \bs{V})= D\bs{v}^T\nabla y(\Omega)+ \dot{\nabla} 
y(\Omega, \bs{V}).
\end{equation*}

By Definition \ref{def:function-domain-shape-der} of shape derivative we have
\begin{align*}
 \nabla y'(\Omega, \bs{V}) &= \nabla\dot{y}(\Omega, \bs{V})-\nabla\left(\nabla 
y(\Omega)\cdot \bs{v}\right)
 =\nabla\dot{y}(\Omega, \bs{V})-D\bs{v}^T\nabla y(\Omega) - 
D^2y(\Omega)^T\bs{v}\\
 &= \dot{\nabla} y(\Omega, \bs{V}) - D^2y(\Omega)^T\bs{v}
 =\left(\nabla y\right)'(\Omega, \bs{V}),
\end{align*}
where we have used Definition \ref{def:domain-shape-der-vector} and the fact that $D^2 y(\Omega)^T$ is symmetric because $y(\Omega)\in \mathcal{C}^2(\Omega)$.
    \end{proof}

\subsection*{Shape derivative of the signed distance function}
Concerning the shape derivative of the signed distance function 
$b=b(\Omega)$, we cite \cite{DZ2011} and \cite{HR2003}.
Since $b|_\Gamma=0$, 
we can use Lemma~\ref{le:y_zero_Gamma} to obtain
\begin{equation}\label{eq:shape_der_of_b}
b'(\Omega, \bs{V})|_\Gamma = - \bs{v}\cdot \bs{n} = -v_n,
\end{equation}
where we have used that $\frac{\partial b}{\partial \bs{n}}=\nabla b \cdot 
\nabla b |_\Gamma = 1 $. Note that this is the shape derivative of 
$b(\Omega)$ only on $\Gamma$.

In order to find the shape derivative of $\bs{w}(\Omega)\definedas \nabla 
b(\Omega)$, we derive
the Eikonal Equation (\ref{eq:eikonal}) to obtain
$(\nabla b )'(\Omega, \bs{V}) \cdot \nabla b = 0$.
This  means that, when restricted to $\Gamma$, $(\nabla b )'(\Omega, \bs{V})$ 
is 
orthogonal to the normal vector field $\bs{n}$. But also from Lemma 
\ref{le:conmute-der} $(\nabla b )'(\Omega, \bs{V})  =  \nabla\, b'(\Omega, 
\bs{V})$  so that
$
(\nabla b )'(\Omega, \bs{V})|_\Gamma  
 =  \gradg (b'(\Omega, \bs{V}))
 =  \gradg b'(\Omega, \bs{V}) .
$

Finally, using formula (\ref{eq:shape_der_of_b}), we obtain
\begin{equation}\label{eq:nabla-prima}
(\nabla b )'(\Omega, \bs{V})|_\Gamma = - \gradg v_n, \ \ \text{ where }v_n= 
\bs{v}\cdot \bs{n}.
\end{equation}
This identity will be used later to obtain the shape derivative of the boundary (vector valued) function $\bs{n}$.

    \section{Properties of the shape derivative of boundary 
functions}\label{S:boundary-properties}
    
We first extend Definition \ref{def:shape-der-boundary-scalar} to vector and tensor valued functions defined on a boundary $\Gamma=\partial \Omega$. To consider shape derivatives in a fixed surface $\Gamma \subsetneq \partial \Omega$, we consider the space of velocities $V_\Gamma(\D)$ instead of $V(\D)$ as it was done in Remark \ref{rem:surfaces-velocities}.
     
 \begin{dfntn}[Vector Boundary Functions]\label{def:boundary-shape-der-vector}

       Consider a vector field $w(\Gamma)\in W^{r,p}(\Gamma, \Rn[N])$ which 
exists for all admissible domains 
       $\Omega\subset \D$ with boundary $\Gamma \in \mathcal{C}^k$, with $1\leq 
r \leq k$.
   For a given velocity field  $\bs{V}\in  \mathcal{C}([0,\epsilon], V^k(\D)) 
$, 
we say that $\bs{w}(\Gamma)$ 
   is shape differentiable in $\Gamma$ in the direction of $\bs{V}$ if there 
exists the material derivative 
   $\dot{\bs{w}}(\Gamma, \bs{V}) = \frac{d}{dt} [\bs{w}(\Gamma_t)\circ 
x(t)]_{t=0}$ in $W^{r-1,p}(\Gamma, \Rn[N])$.
   In that case the (boundary) shape derivative belongs to
   $W^{r-1,p}(\Gamma, \Rn[N])$ and is given by
   \begin{equation*}
    \bs{w}'(\Gamma, \bs{V})= \dot{\bs{w}}(\Omega, \bs{V}) - \Dg\bs{w}\bs{v},
   \end{equation*}
where $\bs{v}=\bs{V(0)}$.

Analogously to tensor valued domain functions,
for a tensor valued boundary function $A(\Gamma):\Gamma \rightarrow \mathbb{R}^{N\times 
N}$ the shape derivative $A'(\Gamma, \bs{V})$ is the tensor valued function which satisfies (\ref{eq:def-tensorshape}), with $\Omega$ 
replaced by $\Gamma$.
\end{dfntn}

    We now extend Lemma \ref{le:domain-boundary-shapeder-scalar} to vector 
valued functions. The proof is straightforward from the fact that 
$\bs{w}(\Omega,\bs{V})\cdot \bs{e} = (\bs{w}\cdot\bs{e})'(\Omega,\bs{V})$ for 
any fixed vector $\bs{e}$, because $\bs{w} \to \bs{w}\cdot \bs{e}$ is a linear 
transformation.
    
    \begin{lmm}[Extension]\label{le:domain-boundary-shapeder-vector}
    Let $\Omega$ be a domain with a $\mathcal{C}^1$ boundary $\Gamma$. If 
the boundary function $\bs{w}(\Gamma) \in \mathcal{C}^1(\Gamma, \Rn[N])$ admits 
an extension $\bs{W}(\Omega)$ which is shape differentiable at $\Omega$ in the 
direction $\bs{V}\in  \mathcal{C}([0,\epsilon], V^k(\D)) $, $k\geq 1$, then the 
shape derivative of $\bs{w}(\Gamma)$ in $\Gamma$ at $\bs{V}$ is given by
    \begin{equation*}
    \bs{w}'(\Gamma, \bs{V})= \bs{W}'(\Omega, \bs{V})|_\Gamma + 
D\bs{W}\bs{n}\, v_n.
    \end{equation*} 
    \end{lmm}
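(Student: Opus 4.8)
The plan is to reduce the vector-valued statement to the already-established scalar case (Lemma~\ref{le:domain-boundary-shapeder-scalar}) by testing against a fixed vector, exactly as the authors hint in the paragraph preceding the statement. First I would fix an arbitrary constant vector $\bs{e}\in\Rn[N]$ and consider the scalar boundary function $z(\Gamma)\definedas\bs{w}(\Gamma)\cdot\bs{e}$, which is the restriction to $\Gamma$ of the scalar domain function $y(\Omega)\definedas\bs{W}(\Omega)\cdot\bs{e}$. Since $\bs{e}$ is constant, $y(\Omega)|_\Gamma=z(\Gamma)$, and the map $\bs{W}\mapsto\bs{W}\cdot\bs{e}$ is a linear transformation, so by Lemma~\ref{le:lineal-domain} the shape differentiability of $\bs{W}(\Omega)$ at $\Omega$ implies that of $y(\Omega)$, with $y'(\Omega,\bs{V})=\bs{W}'(\Omega,\bs{V})\cdot\bs{e}$.

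Next I would apply the scalar Lemma~\ref{le:domain-boundary-shapeder-scalar} to the pair $z(\Gamma)=y(\Omega)|_\Gamma$. This yields
\begin{equation*}
z'(\Gamma,\bs{V})=y'(\Omega,\bs{V})|_\Gamma+\frac{\partial y(\Omega)}{\partial\bs{n}}\,v_n.
\end{equation*}
The two terms on the right translate back into vector quantities: the first is $\bigl(\bs{W}'(\Omega,\bs{V})\cdot\bs{e}\bigr)|_\Gamma=\bs{W}'(\Omega,\bs{V})|_\Gamma\cdot\bs{e}$, again by Lemma~\ref{le:lineal-domain}, and the normal derivative is $\frac{\partial y}{\partial\bs{n}}=\nabla y\cdot\bs{n}=(D\bs{W}^{T}\bs{e})\cdot\bs{n}=\bs{e}\cdot D\bs{W}\bs{n}$, since $y=\bs{W}\cdot\bs{e}$ gives $\nabla y=D\bs{W}^{T}\bs{e}$. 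On the left, the same linearity identifies $z'(\Gamma,\bs{V})=\bigl(\bs{w}(\Gamma)\cdot\bs{e}\bigr)'(\Gamma,\bs{V})=\bs{w}'(\Gamma,\bs{V})\cdot\bs{e}$.

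Combining these identifications gives, for every fixed $\bs{e}$,
\begin{equation*}
\bs{w}'(\Gamma,\bs{V})\cdot\bs{e}=\bs{W}'(\Omega,\bs{V})|_\Gamma\cdot\bs{e}+\bigl(D\bs{W}\bs{n}\,v_n\bigr)\cdot\bs{e}.
\end{equation*}
Since $\bs{e}\in\Rn[N]$ is arbitrary, the two vectors must coincide, which is precisely the claimed identity $\bs{w}'(\Gamma,\bs{V})=\bs{W}'(\Omega,\bs{V})|_\Gamma+D\bs{W}\bs{n}\,v_n$. I do not anticipate a genuine obstacle here; the work is essentially bookkeeping in passing between the vector field and its scalar components. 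The only point demanding a little care is verifying that the constant vector $\bs{e}$ does not interfere with the tangential-gradient computation hidden inside the scalar lemma — i.e.\ confirming $\nabla(\bs{W}\cdot\bs{e})=D\bs{W}^{T}\bs{e}$ so that the normal-derivative term produces $D\bs{W}\bs{n}$ rather than its transpose — and this is a routine consequence of $\bs{e}$ being constant.
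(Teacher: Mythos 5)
Your proposal is correct and follows exactly the route the paper indicates: reduce to the scalar Lemma~\ref{le:domain-boundary-shapeder-scalar} by testing against a fixed vector $\bs{e}$, using Lemma~\ref{le:lineal-domain} to commute the shape derivative with the linear map $\bs{w}\mapsto\bs{w}\cdot\bs{e}$ and the identity $\nabla(\bs{W}\cdot\bs{e})=D\bs{W}^{T}\bs{e}$ to recover the $D\bs{W}\bs{n}\,v_n$ term. The paper only sketches this in one sentence before the statement; your write-up is the same argument carried out in full.
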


    \begin{rmrk}[Comparison with Lemma \ref{le:properties_domain}]
    The first two assertions of Lemma \ref{le:properties_domain} are also valid 
for boundary functions (the proofs can be found in \cite{SZ1992}). Instead of 
the third assertion, we have the following one, which is an immediate consequence of Lemma~\ref{le:domain-boundary-shapeder-scalar}.
    
    \begin{lmm}[Shape derivative of $\phi|_\Gamma$]
       If $z(\Gamma)=\phi|_\Gamma$, for $\phi\in W^{r,p}(\D)$, 
$r-\frac{1}{p}\geq 1$, then $z$ is shape differentiable in $\Gamma$ for any 
direction $\bs{V}$, and $y'(\Gamma, \bs{V})=\frac{\partial \phi}{\partial n} 
v_n.$
      \end{lmm}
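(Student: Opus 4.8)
The plan is to recognize $z(\Gamma)=\phi|_\Gamma$ as the restriction to $\Gamma$ of the \emph{geometry-independent} domain function $y(\Omega):=\phi|_\Omega$, and then simply feed this into Lemma~\ref{le:domain-boundary-shapeder-scalar}. Concretely, first I would invoke the third assertion of Lemma~\ref{le:properties_domain} (Proposition 2.72 of \cite{SZ1992}): since $\phi\in W^{r,p}(\D)$ does not depend on the geometry, $y(\Omega)=\phi|_\Omega$ is shape differentiable at $\Omega$ for every direction $\bs{V}$, and its domain shape derivative vanishes, $y'(\Omega,\bs{V})=0$. This is the only nontrivial input, and it encapsulates the fact that a fixed ambient field has no genuine ``shape'' response, only the transport term.

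Next, because $z(\Gamma)=y(\Omega)|_\Gamma$ with $y$ shape differentiable at $\Omega$, Lemma~\ref{le:domain-boundary-shapeder-scalar} immediately asserts that $z$ is shape differentiable on $\Gamma$ in the direction $\bs{V}$ and supplies the formula
\[
z'(\Gamma,\bs{V})=y'(\Omega,\bs{V})|_\Gamma+\frac{\partial y(\Omega)}{\partial\bs{n}}\,v_n
=0+\frac{\partial\phi}{\partial\bs{n}}\,v_n,
\]
which is exactly the claimed identity (the statement writes $y'$ where $z'$ is meant). No computation beyond substituting $y'(\Omega,\bs{V})=0$ is required.

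The only point deserving care—and the main ``obstacle,'' such as it is—is the regularity bookkeeping that legitimizes the chain of invocations. The hypothesis $r-\tfrac1p\ge 1$ is what guarantees that the trace $z(\Gamma)=\phi|_\Gamma$ lies in $W^{r-1/p,p}(\Gamma)$ and that the boundary material derivative $\dot z(\Gamma,\bs{V})$ exists, via Lemma~\ref{le:material-derivative-equal}; this is precisely the standing assumption under which Lemma~\ref{le:domain-boundary-shapeder-scalar} was established. So I would phrase the proof as: record $y'(\Omega,\bs{V})=0$, note that the regularity hypothesis makes Lemma~\ref{le:domain-boundary-shapeder-scalar} applicable to $y$ and its trace $z$, and read off the conclusion.

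As an independent sanity check I would, if space permits, verify the formula directly from Definition~\ref{def:shape-der-boundary-scalar}. Since $\phi$ is fixed on $\D$, one has $\dot z(\Gamma,\bs{V})=\nabla\phi\cdot\bs{v}|_\Gamma$, while $\gradg z=P\,\nabla\phi|_\Gamma=\nabla\phi|_\Gamma-(\nabla\phi\cdot\bs{n})\bs{n}$ with $P=I-\bs{n}\tensor\bs{n}$; subtracting $\gradg z\cdot\bs{v}$ from $\dot z$ cancels the tangential part of $\nabla\phi\cdot\bs{v}$ and leaves $(\nabla\phi\cdot\bs{n})(\bs{n}\cdot\bs{v})=\tfrac{\partial\phi}{\partial\bs{n}}\,v_n$, in agreement with the above.
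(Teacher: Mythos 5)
Your proof is correct and matches the paper's approach: the paper presents this lemma as ``an immediate consequence of Lemma~\ref{le:domain-boundary-shapeder-scalar},'' which, combined with the vanishing of $y'(\Omega,\bs{V})$ for the geometry-independent extension $y(\Omega)=\phi|_\Omega$ (third assertion of Lemma~\ref{le:properties_domain}), is exactly your argument. Your additional direct verification from Definition~\ref{def:shape-der-boundary-scalar} is a nice sanity check but not needed.
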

    
    \end{rmrk}
    \subsection*{The shape derivatives of $\bs{n}$ and $\kappa$}
    
   Since the normal vector field $\bs{n}$ at $\Gamma$ has the gradient of the 
signed distance function $b(\Omega)$ as an extension, we can use the results of 
   Section 7.1 to obtain the shape derivative of $\bs{n}$.
    
    \begin{lmm}[Shape derivative of $\n$]\label{le:normal_shape_der}
    Let $\Gamma$ be the boundary of a $C^2$-domain $\Omega$ and let 
$\bs{n}(\Gamma)$ be the unit normal vector field  of $\Gamma$. Then $\bs{n}$ is 
shape differentiable and
    $\bs{n}'(\Gamma, \bs{V}) = -\gradg v_n$,
    where $v_n = \bs{V}(0) \cdot \bs{n}$, for all $\bs{V} \in 
\mathcal{C}([0,\epsilon], V^1(\D))$.
    \end{lmm}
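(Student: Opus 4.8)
The key fact to exploit is that the normal field $\bs{n}(\Gamma)$ admits the global extension $\nabla b(\Omega)$, the gradient of the signed distance function, since $\bs{n}\circ p = \nabla b$ near $\Gamma$. This reduces the computation of the boundary shape derivative of $\bs{n}$ to the domain shape derivative of $\nabla b$, which has already been carried out in equation~(\ref{eq:nabla-prima}). The machinery linking the two is the Extension Lemma~\ref{le:domain-boundary-shapeder-vector}, so the entire proof will consist of assembling already-established pieces rather than any new calculation.

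First I would set $\bs{W}(\Omega) \definedas \nabla b(\Omega)$ and observe that $\bs{W}|_\Gamma = \bs{n}(\Gamma)$, so $\bs{W}$ is a valid extension of the boundary function $\bs{w}(\Gamma) = \bs{n}(\Gamma)$. Since $\Omega$ is $\mathcal{C}^2$, the signed distance function $b$ is $\mathcal{C}^2$ on a tubular neighborhood $S_h(\Gamma)$, so $\nabla b \in \mathcal{C}^1(S_h(\Gamma),\Rn[N])$, which supplies the regularity required to apply Lemma~\ref{le:domain-boundary-shapeder-vector} (and guarantees that the boundary material derivative of $\bs{n}$ exists, so that $\bs{n}$ is shape differentiable). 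Lemma~\ref{le:domain-boundary-shapeder-vector} then yields
\begin{equation*}
\bs{n}'(\Gamma, \bs{V}) = (\nabla b)'(\Omega,\bs{V})|_\Gamma + D(\nabla b)\,\bs{n}\, v_n = (\nabla b)'(\Omega,\bs{V})|_\Gamma + D^2 b\,\bs{n}\, v_n.
\end{equation*}

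Next I would invoke the two facts that eliminate the extra terms. By the Eikonal identity~(\ref{eq:eikonal2}), $D^2 b\,\nabla b \equiv 0$, and since $\nabla b|_\Gamma = \bs{n}$ we get $D^2 b\,\bs{n} = 0$ on $\Gamma$, so the second term vanishes. For the first term, equation~(\ref{eq:nabla-prima})—itself obtained by differentiating the Eikonal equation and applying the commutation Lemma~\ref{le:conmute-der} together with the distance-function formula~(\ref{eq:shape_der_of_b})—gives exactly $(\nabla b)'(\Omega,\bs{V})|_\Gamma = -\gradg v_n$. Substituting both into the display above produces $\bs{n}'(\Gamma,\bs{V}) = -\gradg v_n$, as claimed.

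I do not anticipate a genuine obstacle here, since every ingredient is already in place; the proof is essentially a bookkeeping exercise. The one point deserving care is verifying that the regularity hypotheses line up: Lemma~\ref{le:domain-boundary-shapeder-vector} is stated for $\bs{V} \in \mathcal{C}([0,\epsilon], V^k(\D))$ with $k \geq 1$, and the lemma to be proved asserts the result precisely for $k=1$, so I would confirm that a $\mathcal{C}^2$ domain and a $V^1$ velocity field suffice for the extension $\nabla b$ to be $\mathcal{C}^1$ and shape differentiable in the required sense—which is exactly what the results of the preceding subsection on the signed distance function provide.
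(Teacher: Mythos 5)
Your proposal is correct and follows essentially the same route as the paper: extend $\bs{n}$ by $\nabla b$, apply Lemma~\ref{le:domain-boundary-shapeder-vector}, kill the term $D^2b\,\bs{n}\,v_n$ via the Eikonal identity~(\ref{eq:eikonal2}), and conclude with~(\ref{eq:nabla-prima}). Your additional remarks on regularity bookkeeping are a sensible supplement but do not change the argument.
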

    
   \begin{proof}
    Since $\bs{n}(\Gamma)= \nabla b |_\Gamma$,
     where $b$ is the signed distance function of $\Omega$, we use Lemma 
\ref{le:domain-boundary-shapeder-vector} to obtain
     \begin{equation*}
      \bs{n}'(\Gamma, \bs{V})=(\nabla b)'(\Omega, \bs{V})|_\Gamma+ D^2b|_\Gamma 
\bs{n}\, v_n.
     \end{equation*}
Since by equation (\ref{eq:eikonal2})  $D^2b \nabla b= 0$ everywhere, the 
second 
term vanishes and~(\ref{eq:nabla-prima}) yields the desired result.
   \end{proof}

 Concerning  the mean curvature $\kappa =\kappa(\Gamma)$ given by 
(\ref{curvatures}),
 we recall that $\Delta b (\Omega)=\tr (D^2 b)$ is an extension of $\kappa$ to 
a tubular neighborhood of $\Gamma$ (see Section~\ref{S:signed distance function}).
  Then, on the one hand, the shape derivative of $\kappa$ can be obtained using Lemma 
\ref{le:domain-boundary-shapeder-scalar}, as we will do in the proof of the 
next Lemma.
On the other hand, since $D^2b|_\Gamma =\Dg\bs{n}$, we can also express the mean 
curvature using tangential derivatives, as $\kappa = \Divg\bs{n}$.
Then, Corollary \ref{cor:main} gives us quickly the same formula for the shape 
derivative of $\kappa$.

\begin{lmm}[Shape derivative of $\kappa$]\label{le:curvature_shape-der}
 If $\kappa$ is the mean curvature of  $\Gamma$, the boundary of a 
$\mathcal{C}^3$ domain $\Omega$, then $\kappa$ is shape differentiable in
 $\Gamma$ and
 \begin{equation*}
  \kappa'(\Gamma, \bs{V}) =-\lb v_n - |\Dg\bs{n}|^2 v_n,
 \end{equation*}
where $v_n=\bs{V}(0)\cdot \bs{n}$, 
$|\Dg\bs{n}|^2=\Dg\bs{n}:\Dg\bs{n}=tr(\Dg\bs{n}^2)$ and $\lb f = \Divg \gradg 
f$ 
is the Laplace-Beltrami operator of $f$. 
\end{lmm}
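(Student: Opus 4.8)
The plan is to realize $\kappa$ as the restriction to $\Gamma$ of a \emph{domain} function and then invoke Lemma~\ref{le:domain-boundary-shapeder-scalar}. Concretely, set $y(\Omega) = \Delta b(\Omega)$, where $b=b(\Omega)$ is the signed distance function. Since $\Omega$ is $\mathcal{C}^3$, $b$ is $\mathcal{C}^3$ on a tubular neighborhood $S_h(\Gamma)$, so $\Delta b \in \mathcal{C}^1(S_h(\Gamma))$, and by (\ref{mcurvature2}) we have $\kappa = \Delta b|_\Gamma$. Lemma~\ref{le:domain-boundary-shapeder-scalar} then gives
\[
\kappa'(\Gamma,\bs{V}) = (\Delta b)'(\Omega,\bs{V})|_\Gamma + \frac{\partial(\Delta b)}{\partial \bs{n}}\,v_n .
\]
The second term is immediate: evaluating (\ref{eq:eikonal3}) on $\Gamma$, where $\nabla b = \bs{n}$, and using $|D^2b|_\Gamma|^2 = |\Dg\bs{n}|^2$, I get $\frac{\partial(\Delta b)}{\partial \bs{n}} = \nabla\Delta b\cdot\bs{n} = -|\Dg\bs{n}|^2$, which contributes exactly $-|\Dg\bs{n}|^2 v_n$. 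So the whole task reduces to showing $(\Delta b)'(\Omega,\bs{V})|_\Gamma = -\lb v_n$.

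For that first term I would commute the shape and classical derivatives: by Lemma~\ref{le:conmute-der}(3), $(\Delta b)'(\Omega,\bs{V}) = \Delta\bigl(b'(\Omega,\bs{V})\bigr)$, reducing the problem to computing the ordinary Laplacian of the domain function $b' := b'(\Omega,\bs{V})$ and restricting it to $\Gamma$. From (\ref{eq:shape_der_of_b}) we only know the trace $b'|_\Gamma = -v_n$, whereas $\Delta b'$ depends on the behavior of $b'$ in the normal direction as well; this is where I expect the main obstacle to lie. The resolution is to differentiate the Eikonal equation in the shape sense: applying the shape derivative to $|\nabla b|^2\equiv 1$ (which holds throughout $S_h(\Gamma)$) and using Lemma~\ref{le:conmute-der}(1), $(\nabla b)' = \nabla b'$, yields $\nabla b\cdot\nabla b' = 0$ in $S_h(\Gamma)$, so in particular $\partial b'/\partial\bs{n} = 0$ on $\Gamma$. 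Taking one further gradient of $\nabla b\cdot\nabla b' = 0$, pairing the resulting identity $D^2b\,\nabla b' + D^2b'\,\nabla b = 0$ with $\nabla b$, and using $D^2b\,\nabla b = 0$ from (\ref{eq:eikonal2}) together with the symmetry of $D^2b$, gives $\bs{n}\cdot D^2b'\,\bs{n} = 0$ on $\Gamma$; that is, the second normal derivative of $b'$ also vanishes on $\Gamma$.

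With both the first and second normal derivatives of $b'$ vanishing on $\Gamma$, I would finish using the standard decomposition of the Euclidean Laplacian near a hypersurface, $\Delta F|_\Gamma = \lb(F|_\Gamma) + \kappa\,\frac{\partial F}{\partial\bs{n}} + \frac{\partial^2 F}{\partial\bs{n}^2}$, which follows from the tangential calculus of Section~\ref{sec:tangential} (e.g.\ through the canonical extension). Applied to $F = b'$, the last two terms drop out, leaving $(\Delta b)'(\Omega,\bs{V})|_\Gamma = \Delta b'|_\Gamma = \lb(b'|_\Gamma) = \lb(-v_n) = -\lb v_n$. Substituting this and the earlier term into the expression from Lemma~\ref{le:domain-boundary-shapeder-scalar} yields $\kappa'(\Gamma,\bs{V}) = -\lb v_n - |\Dg\bs{n}|^2 v_n$, as claimed. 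The only genuinely delicate points are the vanishing of the normal derivatives of $b'$ and the Laplacian decomposition; the remaining steps are bookkeeping with the cited identities.
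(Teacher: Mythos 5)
Your proof is correct, and its skeleton coincides with the paper's: both realize $\kappa$ as $\Delta b|_\Gamma$, invoke Lemma~\ref{le:domain-boundary-shapeder-scalar}, and dispatch the term $\frac{\partial(\Delta b)}{\partial \bs{n}}v_n$ via the differentiated Eikonal identity (\ref{eq:eikonal3}). Where you genuinely diverge is in the remaining term $(\Delta b)'(\Omega,\bs{V})|_\Gamma$. The paper commutes only once, writing $(\Delta b)'=\Div\,(\nabla b)'$, then splits the Euclidean divergence into $\Divg$ plus a normal contribution and feeds in the already-known trace $(\nabla b)'|_\Gamma=-\gradg v_n$ from (\ref{eq:nabla-prima}); the normal contribution is killed by $D^2b\,\nabla b=0$. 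You instead commute all the way down to $\Delta b'$, use only the scalar trace $b'|_\Gamma=-v_n$ from (\ref{eq:shape_der_of_b}), and compensate by proving that both $\partial b'/\partial\bs{n}$ and $\bs{n}\cdot D^2b'\,\bs{n}$ vanish on $\Gamma$ (by shape-differentiating the Eikonal equation once and twice), so that the splitting $\Delta F|_\Gamma=\lb(F|_\Gamma)+\kappa\,\partial_{\bs n}F+\bs{n}\cdot D^2F\,\bs{n}$ collapses to $\lb(-v_n)$. Both routes are sound; yours needs one extra identity (the vanishing second normal derivative of $b'$) and the Laplacian splitting, but avoids relying on the vector-valued formula (\ref{eq:nabla-prima}), and as a by-product it makes explicit that the paper's terse cancellation of the normal term is really the statement $D(\nabla b)'\,\bs{n}\cdot\bs{n}=0$, which your Eikonal argument supplies cleanly. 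The only housekeeping point is to note that $\nabla b\cdot\nabla b'=0$ must be established throughout the tubular neighborhood (not merely on $\Gamma$) before you differentiate it again, and that commuting $\Delta$ with the shape derivative via Lemma~\ref{le:conmute-der}(3) uses the $\mathcal{C}^3$ regularity assumed in the statement.
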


 \begin{proof}
 Since $\kappa =\Delta b|_\Gamma$, by Lemma 
\ref{le:domain-boundary-shapeder-scalar}  
$\kappa'(\Gamma, \bs{V})= (\Delta b)'(\Omega, \bs{V})|_\Gamma + (\nabla 
\Delta b \cdot \nabla b )|_\Gamma v_n$.
The second term is equal to $-|\Dg\bs{n}|^2 v_n$ using equation 
(\ref{eq:eikonal3}), and the fact that $\Dg\bs{n}=D^2 b |_\Gamma$.
For the first term, we use Lemma \ref{le:conmute-der} and the definition of 
tangential divergence~(\ref{eq:tan-der-def3}) to obtain
\begin{align*}
 (\Delta b)'(\Omega, \bs{V})&=\Div (\nabla b)'(\Omega, \bs{V})
                            = \Divg (\nabla b)'(\Omega, \bs{V}) + 
D^2b|_\Gamma\bs{n}\cdot \bs{n}\\
                            &=-\Divg(\gradg v_n)
                            = -\lb v_n,
\end{align*}
where we have used (\ref{eq:nabla-prima}) and the fact that $D^2b|_\Gamma\bs{n} 
= D^2b|_\Gamma \nabla b|_\Gamma = (D^2b \nabla b)|_\Gamma = 0$. 
 \end{proof}

%%%%%%%%%%%%%%%%%%%%%%
\section{The shape derivatives of tangential operators}\label{sec:main}
%%%%%%%%%%%%%%%%%%%%%%
We are now in position to present the main results of this paper, namely,
formulas for the shape derivatives of boundary functions that are
tangential derivatives of boundary functions. 
More precisely, we will find the shape derivatives of boundary functions of 
the form 
$\gradg z$, $\Dg\bs{w}$, $\Divg \bs{w}$ and $\lb z$, when $z(\Gamma)$ and 
$\bs{w}(\Gamma)$ 
are shape differentiable boundary functions, scalar and vector
  valued, respectively.
  Examples of important applications will be presented in the two subsequent 
sections. 

It is worth noting the difference with Lemma~\ref{le:conmute-der} where we established that standard differential operators commute with the shape derivative of domain functions.
    
    \begin{thrm}[Shape derivative of surface derivatives]\label{thm:main} 
    
    For any admissible boundary $\Gamma = \partial \Omega$, where $\Omega$ 
is a $\mathcal{C}^2$ domain in
    $\D\subset\Rn[N]$, consider a real function 
$z(\Gamma)\in\mathcal{C}^2(\Gamma)$. 
% such that there exists an extension $y$ to $\Omega$, i.e.: $z(\Gamma)=y(\Omega)|_\Gamma$.
    If $z$ is shape differentiable at $\Gamma$ in the direction 
$\bs{V}\in\mathcal{C}([0,\epsilon], V^2(\D))$, then $ \gradg z$ is shape 
differentiable at $\Gamma$ in the direction $\bs{V}$, and
    \begin{equation}\label{eq:theorem1}
    \left(\gradg z \right)'(\Gamma, \bs{V}) = \gradg z'(\Gamma, \bs{V}) + 
\left( \bs{n} \tensor \gradg v_n- v_n\Dg \bs{n} \right) \gradg z,
    \end{equation}
    where $v_n= \bs{V}(0)\cdot \bs{n}$.
    \end{thrm}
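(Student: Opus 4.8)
The plan is to reduce the statement about the boundary function $\gradg z$ to a computation involving an extension into the domain, where I can invoke the commutation of classical derivatives with shape derivatives (Lemma~\ref{le:conmute-der}). The key tool will be Lemma~\ref{le:domain-boundary-shapeder-vector}, which relates the shape derivative of a boundary function to the shape derivative of any domain extension. Concretely, I would take the canonical extension $Z \definedas z\circ p$ (with $p$ the projection of~\eqref{eq:projection}), so that $\gradg z = \nabla Z|_\Gamma$ by~\eqref{eq:tan-der-def3}, and treat $\bs{w}(\Gamma) \definedas \gradg z$ as a boundary vector field with domain extension $\bs{W}(\Omega) \definedas \nabla Z$.

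\textbf{First} I would apply Lemma~\ref{le:domain-boundary-shapeder-vector} to $\bs{w}(\Gamma)=\gradg z$ with extension $\bs{W}=\nabla Z$, giving
\[
(\gradg z)'(\Gamma, \bs{V}) = (\nabla Z)'(\Omega, \bs{V})|_\Gamma + D(\nabla Z)\,\bs{n}\,v_n = (\nabla Z)'(\Omega, \bs{V})|_\Gamma + D^2 Z\,\bs{n}\,v_n.
\]
\textbf{Next}, by Lemma~\ref{le:conmute-der}(1), classical gradient commutes with the domain shape derivative, so $(\nabla Z)'(\Omega,\bs{V}) = \nabla Z'(\Omega,\bs{V})$. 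The work then splits into two tasks: identifying $Z'(\Omega,\bs{V})$ (a scalar domain shape derivative) and then computing its gradient's tangential restriction, and separately evaluating the extra term $D^2 Z\,\bs{n}\,v_n$ on $\Gamma$. For the first task I expect $Z'(\Omega,\bs{V})|_\Gamma$ to relate to the boundary shape derivative $z'(\Gamma,\bs{V})$ via Lemma~\ref{le:domain-boundary-shapeder-scalar}; since $Z=z\circ p$ is the canonical extension one should have $\partial Z/\partial\bs{n}=0$ on $\Gamma$ (the canonical extension is constant along normals, as $\nabla(z\circ p)=[I-b\,D^2b]\gradg z\circ p$ is tangential on $\Gamma$), which simplifies that relation so that restricting $\nabla Z'$ tangentially yields $\gradg z'(\Gamma,\bs{V})$ as the leading term.

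\textbf{The hard part will be} bookkeeping the two correction tensors and showing they assemble into exactly $\bigl(\bs{n}\tensor\gradg v_n - v_n\Dg\bs{n}\bigr)\gradg z$. The term $D^2 Z\,\bs{n}\,v_n$ must be evaluated using the canonical-extension formulas of Lemma~\ref{le:canonical-ext}: differentiating $\nabla Z = [I-b\,D^2b]\gradg z\circ p$ and using $Dp|_\Gamma=P$, $b|_\Gamma=0$, together with $\Dg\bs{n}=D^2b|_\Gamma$, should produce the $-v_n\Dg\bs{n}\,\gradg z$ contribution (and the normal derivative of $\gradg z$ extended tangentially). The remaining $\bs{n}\tensor\gradg v_n$ piece I expect to emerge from the tangential-versus-full-gradient discrepancy when passing from $\nabla Z'(\Omega,\bs{V})|_\Gamma$ to $\gradg z'(\Gamma,\bs{V})$: the normal component of $\nabla Z'$ is governed by $\partial_{\bs{n}} Z'$, and using~\eqref{eq:nabla-prima}-type identities for the shape derivative of the distance gradient, together with the relation $z'(\Gamma,\bs{V})=Z'(\Omega,\bs{V})|_\Gamma+(\partial Z/\partial\bs{n})v_n$, one recovers a $\bs{n}\tensor\gradg v_n$ term acting on $\gradg z$. \textbf{I would therefore} carry the computation pointwise on $\Gamma$, systematically substituting $b=0$, $\nabla b=\bs{n}$, $D^2b|_\Gamma=\Dg\bs{n}$, and $\Dg\bs{n}\,\bs{n}=0$, and collect terms; the main risk is sign errors and correctly tracking which gradient (full versus tangential) appears at each stage, so I would double-check the final identity by testing it against the already-established special case $z=\kappa$ via~\eqref{eq:grad_curvature} and Lemma~\ref{le:curvature_shape-der}.
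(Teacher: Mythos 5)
Your route is correct and reaches the stated formula, but it is a genuine variant of the paper's argument: both proofs extend $\gradg z$ into the domain, apply Lemma~\ref{le:domain-boundary-shapeder-vector} to that extension, and commute the classical gradient with the domain shape derivative via Lemma~\ref{le:conmute-der}; the difference is the choice of extension. The paper takes an arbitrary shape-differentiable extension $y(\Omega)$ of $z$ and works with $\bs{\Phi}(\Omega)=\nabla y-(\nabla y\cdot\nabla b)\nabla b$, which forces a three-term product-rule computation for $\bs{\Phi}'$ and the auxiliary identity $D\bs{\Phi}|_\Gamma\bs{n}-\gradg(\partial_n y)=-\Dg\bs{n}\,\gradg z$. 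Your choice of the canonical extension $Z=z\circ p$ buys two simplifications that you correctly anticipate: since $\nabla Z=[I-b\,D^2b]\,(\gradg z\circ p)$ one gets $\partial Z/\partial\bs{n}=0$, hence $Z'(\Omega,\bs{V})|_\Gamma=z'(\Gamma,\bs{V})$ with no correction, and a direct computation gives $D^2Z|_\Gamma\bs{n}=\Dg^2z\,\bs{n}-\Dg\bs{n}\,\gradg z=-\Dg\bs{n}\,\gradg z$, which is exactly the $-v_n\Dg\bs{n}\,\gradg z$ term. The one step you leave vague is where $\bs{n}\tensor\gradg v_n$ comes from; the clean way to get it is to observe that for the canonical extension the identity $\nabla Z\cdot\nabla b\equiv 0$ holds throughout the tubular neighborhood (not merely on $\Gamma$), so it may be shape-differentiated to give
\begin{equation*}
\partial_n Z'(\Omega,\bs{V})\big|_\Gamma=\nabla Z'\cdot\bs{n}\big|_\Gamma=-\gradg z\cdot(\nabla b)'(\Omega,\bs{V})\big|_\Gamma=\gradg z\cdot\gradg v_n,
\end{equation*}
by~(\ref{eq:nabla-prima}), and this normal component of $\nabla Z'|_\Gamma=\gradg z'(\Gamma,\bs{V})+(\partial_n Z')\bs{n}$ is precisely $(\bs{n}\tensor\gradg v_n)\gradg z$. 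The only hypothesis you should justify explicitly is that the canonical extension $Z(\Omega)=z(\Gamma)\circ p_\Omega$ is itself shape differentiable as a domain function (the paper assumes the analogous fact for its extension $y$, so this is a shared, not an additional, burden), and your proposed sanity check against $z=\kappa$ via~(\ref{eq:grad_curvature}) is a sensible safeguard.
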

    
    Before proceeding to the proof it is worth noticing the differences with 
Lemma~\ref{le:conmute-der} where we have shown that the shape derivative of 
domain integrands commutes with the space derivatives.

        \begin{proof}
        Let $y=y(\Omega)$ be an extension of $z(\Gamma)$ to $\Omega$, i.e. $z(\Gamma)=y(\Omega)|_\Gamma$, then by definition 
    \begin{equation*}
    \gradg z(\Gamma) = \nabla y|_\Gamma- \partial_{n} y  
    \bs{n} = \left( \nabla y - (\nabla y \cdot \nabla b) \nabla b \right) 
|_\Gamma,
    \end{equation*}
    because $\partial_{n} y  = \frac{\partial y}{\partial n} =\nabla y \cdot 
\bs{n}$.
    Then $\bs{\Phi} (\Omega) \definedas \nabla y - (\nabla y \cdot \nabla b) 
\nabla b$ is an extension to $\Omega$ of
    $\gradg z (\Gamma)$. Due to Lemma~\ref{le:domain-boundary-shapeder-vector} 
these shape derivatives satisfy
    \begin{equation}\label{eq:related_derivative}
    \left(\gradg z \right)'(\Gamma, \bs{V})= \bs{\Phi}'(\Omega, 
\bs{V})|_\Gamma +D\bs{\Phi}(\Omega)|_\Gamma\bs{n}\,v_n.
    \end{equation}
    
    We now compute the domain shape derivative of $\bs{\Phi}(\Omega)$.
    Using the product rule we have
    \begin{align*}
    \bs{\Phi}'(\Omega, \bs{V})& = (\nabla y)'(\Omega, \bs{V})- (\nabla 
y)'(\Omega, \bs{V})\cdot \nabla b\ \nabla b- 
    \nabla y \cdot (\nabla b)'(\Omega, \bs{V})\ \nabla b - \nabla y \cdot 
\nabla b \ (\nabla b)'(\Omega, \bs{V})\\
    &= \left(I-\nabla b\tensor \nabla b\right) \nabla y'(\Omega, 
\bs{V})-\nabla y \cdot (\nabla b)'(\Omega, \bs{V})\ \nabla b - \nabla y \cdot 
\nabla b \ (\nabla b)'(\Omega, \bs{V}),
    \end{align*}
    where we have used Lemma~\ref{le:conmute-der} to commute the shape 
derivative and the gradient of $y$.
Restricting to $\Gamma$, using the definition of tangential gradient and 
formula 
(\ref{eq:nabla-prima}) for $(\nabla b)'(\Omega, \bs{V})|_\Gamma$, we obtain     
 \begin{equation*}
    \bs{\Phi}'(\Omega, \bs{V})|_\Gamma 
     = \gradg y'(\Omega, \bs{V}) + (\bs{n}\tensor \gradg v_n) \gradg z + 
\partial_n y \gradg v_n,
    \end{equation*}
    where we have used
    $\nabla y(\Omega)|_\Gamma \cdot \gradg v_n\ \bs{n} = \gradg z \cdot 
\gradg v_n\ \bs{n} = (\bs{n}\tensor \gradg v_n) \gradg z$.
        From Lemma~\ref{le:domain-boundary-shapeder-scalar} $y'(\Omega, 
\bs{V})|_\Gamma = z'(\Gamma, \bs{V})-\partial_n y\, v_n$ 
    and the product rule for tangential derivative yields
    \begin{align*}
    \bs{\Phi}'(\Omega, \bs{V})|_\Gamma & =  \gradg z'(\Gamma, \bs{V}) - 
\gradg(\partial_n y\, v_n)  
    +(\bs{n}\tensor \gradg v_n) \gradg z + \partial_n y \gradg v_n\\ 
    & =   \gradg z'(\Gamma, \bs{V}) + (\bs{n}\tensor \gradg v_n) \gradg z - 
v_n\gradg (\partial_n y).
    \end{align*}
    
    Then, from (\ref{eq:related_derivative}), to complete the proof of 
(\ref{eq:theorem1}), we need to show that 
    \begin{equation}\label{eq:final_eq}
    D\bs{\Phi}(\Omega)|_\Gamma\bs{n} -  \gradg (\partial_n y) = - \Dg \bs{n} 
\gradg z.
    \end{equation}
    
    Applying the product rule of classical derivatives to $\bs{\Phi} 
(\Omega) = \nabla y - (\nabla y \cdot \nabla b) \nabla b$, we obtain, using $\n 
= \nabla b|_\Gamma$,
    \begin{align*}
    D\bs{\Phi}(\Omega)|_\Gamma\bs{n} &= D^2y|_\Gamma \bs{n} - 
    \left( \bs{n} \tensor \nabla(\nabla y \cdot \nabla b)|_\Gamma \right) 
\bs{n} - \partial_n y D^2b \nabla b |_\Gamma\\  
    &=  D^2y|_\Gamma \bs{n} - \partial_n(\nabla y \cdot \nabla b)\bs{n},
    \end{align*}
    because $D^2b \nabla b=0$.
    Besides, 
    \begin{align*}
    \gradg(\partial_n y) &= \nabla(\nabla y \cdot \nabla b)|_\Gamma - 
\partial_n (\nabla y \cdot \nabla b) \bs{n} \\
    &= D^2y|_\Gamma \bs{n} - D^2b \nabla y|_\Gamma - \partial_n (\nabla y 
\cdot \nabla b) \bs{n} \\
    &= D\bs{\Phi}(\Omega)|_\Gamma\bs{n} - \Dg\bs{n} \gradg z,
    \end{align*}
    where we have used that
    $D^2b \nabla y|_\Gamma = \Dg\bs{n} \gradg y
    = \Dg\bs{n} \gradg z$.
    From this equation we obtain (\ref{eq:final_eq}) and the claim follows.
    \end{proof}

    \begin{crllr}[For vector fields]\label{cor:main}
    If the functions $z(\Gamma)\in\mathcal{C}^2(\Gamma)$ and 
$\bs{w}(\Gamma)\in\mathcal{C}^2(\Gamma,\Rn[N])$ are shape differentiable at 
$\Gamma\in \mathcal{C}^2 $ in the direction  
$\bs{V}\in\mathcal{C}([0,\epsilon], 
V^2(\D))$, then $\Dg \bs{w}$ and $\Divg\bs{w}$ are also shape differentiable at 
$\Gamma$ in the direction $V$ and 
    \begin{align}\label{eq:shape-der-Dg}
    (\Dg \bs{w})'(\Gamma, \bs{V}) &= \Dg \bs{w}'(\Gamma, \bs{V}) + 
\Dg\bs{w}[\gradg v_n\tensor \bs{n} - v_n\Dg \bs{n}], \\
\label{eq:shape-der-Divg}
    (\Divg\bs{w})'(\Gamma, \bs{V}) &= \Divg \bs{w}'(\Gamma, \bs{V}) +  
[\bs{n}\tensor\gradg v_n - v_n\Dg \bs{n}]:\Dg\bs{w},
    \end{align}
    where $S:T=\tr(S^T T)$ denote the scalar product of tensors.
    \end{crllr}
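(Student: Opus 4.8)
The plan is to reduce the vector and tensor statements to the scalar result of Theorem~\ref{thm:main} by working componentwise in a fixed Cartesian basis $\{\bs{e}_i\}$ of $\Rn[N]$, exploiting that dotting with a constant vector, transposing, and taking a trace are all linear operations and hence commute with the shape derivative by Lemma~\ref{le:lineal-domain}.

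First I would write $w_i \definedas \bs{w}\cdot\bs{e}_i$ for the components of $\bs{w}$; since $\bs{w}\in\mathcal{C}^2(\Gamma,\Rn[N])$ is shape differentiable, each $w_i$ is a shape differentiable scalar boundary function with $w_i' = \bs{w}'\cdot\bs{e}_i$. With the convention under which the rows of $\Dg\bs{w}$ are the tangential gradients of the components, i.e. $\Dg\bs{w}^T\bs{e}_i = \gradg w_i$ and $(\Dg\bs{w}')^T\bs{e}_i = \gradg w_i'$, I would apply Theorem~\ref{thm:main} to each $z=w_i$ to obtain
\begin{equation*}
(\gradg w_i)'(\Gamma,\bs{V}) = \gradg w_i'(\Gamma,\bs{V}) + (\bs{n}\tensor\gradg v_n - v_n\Dg\bs{n})\gradg w_i.
\end{equation*}
By the definition of the shape derivative of a tensor this reads $(\Dg\bs{w}^T)'\bs{e}_i = \big((\Dg\bs{w}')^T + (\bs{n}\tensor\gradg v_n - v_n\Dg\bs{n})\Dg\bs{w}^T\big)\bs{e}_i$ for every basis vector, hence the tensor identity
\begin{equation*}
(\Dg\bs{w}^T)' = (\Dg\bs{w}')^T + (\bs{n}\tensor\gradg v_n - v_n\Dg\bs{n})\Dg\bs{w}^T.
\end{equation*}

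Transposing this identity then yields \eqref{eq:shape-der-Dg}. Here I would use that transposition commutes with the shape derivative (so $(\Dg\bs{w}^T)' = ((\Dg\bs{w})')^T$), that $(\bs{n}\tensor\gradg v_n)^T = \gradg v_n\tensor\bs{n}$, and crucially that $\Dg\bs{n} = D^2b|_\Gamma$ is symmetric, which converts $(\bs{n}\tensor\gradg v_n - v_n\Dg\bs{n})^T$ into $\gradg v_n\tensor\bs{n} - v_n\Dg\bs{n}$ and produces the right-multiplication factor $\Dg\bs{w}[\gradg v_n\tensor\bs{n} - v_n\Dg\bs{n}]$.

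Finally, for the divergence I would use $\Divg\bs{w} = \tr(\Dg\bs{w})$, which follows directly from the definitions in Section~\ref{sec:tangential}. Since the trace is linear it commutes with the shape derivative (Lemma~\ref{le:lineal-domain}), so $(\Divg\bs{w})' = \tr\big((\Dg\bs{w})'\big)$; substituting \eqref{eq:shape-der-Dg} and applying the identity $\tr(\Dg\bs{w}\,M) = M^T:\Dg\bs{w}$ with $M = \gradg v_n\tensor\bs{n} - v_n\Dg\bs{n}$ (again invoking the symmetry of $\Dg\bs{n}$, so that $M^T = \bs{n}\tensor\gradg v_n - v_n\Dg\bs{n}$) gives \eqref{eq:shape-der-Divg}. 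The only genuine obstacle is bookkeeping the transposes consistently; once the symmetry of $\Dg\bs{n}$ and the commutation of the shape derivative with the component, transpose, and trace maps are in place, both formulas drop out.
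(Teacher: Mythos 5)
Your proposal is correct and follows essentially the same route as the paper's proof: reduce to Theorem~\ref{thm:main} componentwise via $\Dg\bs{w}^T\bs{e}_i=\gradg w_i$, obtain the tensor identity for $(\Dg\bs{w}^T)'$, transpose using Lemma~\ref{le:lineal-domain} and the symmetry of $\Dg\bs{n}$, and then take the trace for the divergence formula. The only difference is that you make explicit a few steps (the symmetry of $\Dg\bs{n}$ and the identity $\tr(\Dg\bs{w}\,M)=M^T:\Dg\bs{w}$) that the paper leaves implicit.
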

    
    \begin{proof}
    In order to obtain (\ref{eq:shape-der-Dg}), note that 
$\Dg\bs{w}^T\bs{e}_i=\gradg w_i$, where $w_i=\bs{w}\cdot \bs{e}_i$, with 
$\{\bs{e}_1, ..., \bs{e}_N\}$ being the canonical basis of $\Rn[N]$. By 
definition, the shape derivative of the tensor $\Dg\bs{w}^T$ must satisfy 
    \begin{equation*}
    (\Dg\bs{w}^T)'(\Gamma, \bs{V})\bs{e}_i= (\Dg\bs{w}^T\bs{e}_i)'(\Gamma, 
\bs{V})= (\gradg w_i)'(\Gamma, \bs{V}). 
    \end{equation*}
    Applying (\ref{eq:theorem1}) to $z(\Gamma)=w_i=\bs{w}\cdot 
\bs{e}_i$, we obtain
    \begin{align*}
    (\Dg \bs{w}^T)'(\Gamma, \bs{V})\bs{e}_i &= (\gradg w_i)'(\Gamma, \bs{V})\\
    &= \gradg w_i'(\Gamma, \bs{V}) + [\bs{n}\tensor\gradg v_n - v_n\Dg 
\bs{n}]\gradg w_i\\    
    &=
    \left(\Dg \bs{w}'(\Gamma, \bs{V})^T + [\bs{n}\tensor\gradg v_n  - v_n\Dg 
\bs{n}]\Dg\bs{w}^T\right)\bs{e}_i. 
    \end{align*}
    The linearity of the transpose operator and 
Lemma~\ref{le:lineal-domain} yield the desired result.
    
    Finally, we recall that $(\Divg\bs{w})'(\Gamma, \bs{V})=\tr(\Dg 
\bs{w})'(\Gamma, \bs{V})$ and
%observe that, by the tensor property 
$(\bs{a}\tensor \bs{b}): S = \bs{a}\cdot S \bs{b}$.
Therefore~(\ref{eq:shape-der-Dg}) implies
    \begin{equation*} %\label{eq:div2}
    (\Divg\bs{w})'(\Gamma, \bs{V})= \Divg \bs{w}'(\Gamma, \bs{V}) + 
    \Dg\bs{w}\gradg v_n \cdot \bs{n}- v_n\Dg \bs{n}:\Dg\bs{w},
    \end{equation*}
and~(\ref{eq:shape-der-Divg}) follows.
    \end{proof}
    
    We end this section establishing the shape derivative of the 
Laplace-Beltrami operator of a boundary function, which is more involved 
because 
it is of second order.
    
    \begin{thrm}[Shape derivative of Lapace-Beltrami]\label{thm:shape-der-lb}
    If $z=z(\Gamma)\in \mathcal{C}^3(\Gamma)$ is shape differentiable at a 
$\mathcal{C}^3$-boundary $\Gamma$ in the direction 
$\bs{V}\in\mathcal{C}([0,\epsilon], V^3(\D))$,  then $\lb z \definedas \Divg 
\gradg z$ 
    is also shape differentiable at $\Gamma$ in the direction $\bs{V}$, and 
its shape derivative is given by
    \begin{equation}\label{eq:shape-der-laplace-beltrami}
    \begin{split}
    (\lb z)'(\Gamma, \bs{V})
    &= \lb z'(\Gamma, \bs{V}) -2v_n\Dg\bs{n}:\Dg^2z
    +\left( \kappa \gradg v_n- 2\Dg\bs{n}\gradg v_n- v_n\gradg \kappa 
\right)\cdot \gradg z
    \\
    &= \lb z'(\Gamma, \bs{V}) 
        -v_n\left(2\Dg\bs{n}:\Dg^2z +\gradg \kappa \cdot \gradg z 
\right)
        +  \gradg v_n \cdot \left( \kappa \gradg z - 2\Dg\bs{n}\gradg 
z\right) .
    \end{split}
    \end{equation}
    \end{thrm}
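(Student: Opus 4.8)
The plan is to exploit the factorization $\lb z = \Divg\gradg z$ and apply the two first-order formulas already established, in sequence. First I would set $\bs{w}\definedas\gradg z$, so that $\Dg\bs{w}=\Dg^2 z$, and invoke the divergence formula~\eqref{eq:shape-der-Divg} of Corollary~\ref{cor:main}:
\[
(\lb z)'(\Gamma,\bs{V}) = \Divg\big[(\gradg z)'(\Gamma,\bs{V})\big] + \left[\bs{n}\tensor\gradg v_n - v_n\Dg\bs{n}\right]:\Dg^2 z .
\]
Then I would substitute the expression for $(\gradg z)'(\Gamma,\bs{V})$ provided by Theorem~\ref{thm:main}. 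Since $\Divg\gradg z'(\Gamma,\bs{V}) = \lb z'(\Gamma,\bs{V})$, this reduces the problem to computing the tangential divergence of the correction tensor acting on $\gradg z$, and to simplifying the contraction term. The regularity hypothesis $z\in\mathcal{C}^3(\Gamma)$ guarantees $\gradg z\in\mathcal{C}^2(\Gamma,\Rn[N])$, so that both Theorem~\ref{thm:main} and Corollary~\ref{cor:main} are applicable.

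For the divergence term I must compute $\Divg\big[(\bs{n}\tensor\gradg v_n)\gradg z\big]$ and $\Divg\big[v_n\Dg\bs{n}\gradg z\big]$. For the first, I would use $(\bs{n}\tensor\gradg v_n)\gradg z = (\gradg v_n\cdot\gradg z)\,\bs{n}$ together with the product rule~(ii) of Lemma~\ref{le:DgProperties}; since $\Divg\bs{n}=\kappa$ by~\eqref{mcurvature2} and the tangential gradient of a scalar is orthogonal to $\bs{n}$, this collapses to $\kappa\,(\gradg v_n\cdot\gradg z)$. For the second, product rules~(ii) and~(v) give $\Divg(v_n\Dg\bs{n}\gradg z)=v_n\big(\Dg\bs{n}:\Dg^2 z + \gradg z\cdot\Divg\Dg\bs{n}\big) + (\Dg\bs{n}\gradg z)\cdot\gradg v_n$, where the symmetry of $\Dg\bs{n}=D^2b|_\Gamma$ is used to write $\Dg\bs{n}\gradg z=(\Dg\bs{n})^T\gradg z$. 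The key simplification here is the identity $\gradg z\cdot\Divg\Dg\bs{n}=\gradg z\cdot\gradg\kappa$, which follows from~\eqref{eq:grad_curvature} (namely $\gradg\kappa = P\Divg\Dg\bs{n}$) and the tangentiality of $\gradg z$.

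The contraction term $\big[\bs{n}\tensor\gradg v_n - v_n\Dg\bs{n}\big]:\Dg^2 z$ is the most delicate part, and I expect it to be the main obstacle, precisely because $\Dg^2 z$ is \emph{not} symmetric. Using $(\bs{a}\tensor\bs{b}):S=\bs{a}\cdot S\bs{b}$ from Lemma~\ref{le:tensor}, the first piece is $\bs{n}\cdot(\Dg^2 z)\gradg v_n = (\Dg^2 z)^T\bs{n}\cdot\gradg v_n$, so I need the ``normal trace'' $(\Dg^2 z)^T\bs{n}$. I would obtain it by differentiating the orthogonality relation $\bs{n}\cdot\gradg z = 0$: writing $(\Dg^2 z)^T\bs{n}=\sum_i n_i\gradg g_i$ with $g_i\definedas\gradg z\cdot\bs{e}_i$, applying the scalar product rule componentwise, and using $\bs{n}\cdot\gradg z=0$ together with the symmetry of $\Dg\bs{n}$, yields $(\Dg^2 z)^T\bs{n} = -\Dg\bs{n}\gradg z$. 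This is exactly the relation that encodes the non-symmetry and converts the normal contraction into a tangential term.

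Finally I would assemble the pieces. Collecting the two occurrences of $v_n\Dg\bs{n}:\Dg^2 z$ (one from $\Divg(v_n\Dg\bs{n}\gradg z)$, one from the contraction term) produces the factor $-2v_n\Dg\bs{n}:\Dg^2 z$; the two occurrences of $(\Dg\bs{n}\gradg z)\cdot\gradg v_n$ combine, via the symmetry of $\Dg\bs{n}$, into $-2\Dg\bs{n}\gradg v_n\cdot\gradg z$; and the remaining terms give $\kappa\,\gradg v_n\cdot\gradg z$ and $-v_n\gradg\kappa\cdot\gradg z$. Together with $\Divg\gradg z'(\Gamma,\bs{V})=\lb z'(\Gamma,\bs{V})$ this is the first displayed form of~\eqref{eq:shape-der-laplace-beltrami}, and a trivial regrouping of the coefficients of $v_n$ and of $\gradg v_n$ gives the second. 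The doubling of the symmetric contributions is the structural point worth double-checking, since it is the source of the factor $2$ in both the $\Dg\bs{n}:\Dg^2 z$ and the $\Dg\bs{n}\gradg v_n$ terms.
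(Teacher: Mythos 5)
Your proposal is correct and follows essentially the same route as the paper: apply Corollary~\ref{cor:main} to $\Divg\gradg z$, substitute Theorem~\ref{thm:main}, and resolve the two resulting terms using the product rules of Lemma~\ref{le:DgProperties}, the identity $(\Dg^2z)^T\bs{n}=-\Dg\bs{n}\gradg z$ obtained by differentiating $\bs{n}\cdot\gradg z=0$, and $P\lb\bs{n}=\gradg\kappa$ from~\eqref{eq:grad_curvature}. The only (cosmetic) difference is that the paper processes $\Divg(M\gradg z)$ with $M=\bs{n}\tensor\gradg v_n-v_n\Dg\bs{n}$ in one stroke via rule~(v), writing it as $M^T:\Dg^2z+\Divg M^T\cdot\gradg z$, whereas you split $M\gradg z$ into its two constituent vector fields and take their divergences separately; both yield the same doubled terms.
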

    
    \begin{proof}
     In order to simplify the calculation, we denote $M = \bs{n}\tensor\gradg 
v_n-v_n\Dg\bs{n}$.
     Using successively the formulas for the shape derivative of a tangential 
divergence (Corollary \ref{cor:main}) and for a tangential gradient (Theorem 
\ref{thm:main}), we have
      \begin{align*}
      (\lb z)'(\Gamma, \bs{V})
      &= (\Divg \gradg z)'(\Gamma, \bs{V}) \\
      &= \Divg ( (\gradg z)'(\Gamma,V)) + M: \Dg \gradg z \\
       &= \Divg [\gradg z'(\Gamma, \bs{V})+ M\gradg z] + M:\Dg^2z \\
       &=\lb z'(\Gamma, \bs{V})+\Divg ( M\gradg z)+ M:\Dg^2z.
       \end{align*}
       Using the product rule (v) of Lemma \ref{le:DgProperties} we obtain
        \begin{align}
       (\lb z)'(\Gamma, \bs{V})&= \lb z'(\Gamma, \bs{V})+M^T:\Dg^2z+\Divg 
M^T\cdot\gradg z+ M:\Dg^2z \nonumber \\ 
       &= \lb z'(\Gamma, \bs{V})+(M+M^T):\Dg^2z+\Divg M^T\cdot \gradg z. \label{eq:a0}
      \end{align}
      
     Since $D_\Gamma \n^T = D_\Gamma \n$, the second term of the right-hand side reads
      \begin{equation*}
       M+M^T= \bs{n}\tensor \gradg v_n+ \gradg v_n \tensor \bs{n} -2 v_n 
\Dg\bs{n}.
      \end{equation*}
      Using the tensor property $(\bs{a}\tensor \bs{b}): S = \bs{a}\cdot S 
\bs{b}$ and that $\Dg^2 z \,\bs{n}=0$, we obtain
      \begin{equation*}
       (M+M^T):\Dg^2z =\bs{n}\cdot \Dg^2z \gradg v_n - 2 v_n\Dg\bs{n}: \Dg^2 z.
      \end{equation*}
Observe that differentiating $\bs{n}\cdot \gradg z=0$ leads to 
$\Dg^2z^T\bs{n}=-\Dg\bs{n}\gradg z$ which implies 
$\bs{n}\cdot \Dg^2z\gradg v_n = -\Dg\bs{n}\gradg v_n\cdot \gradg z$. Then
      \begin{equation}\label{eq:a1}
       (M+M^T):\Dg^2z =-\Dg\bs{n}\gradg v_n\cdot \gradg z - 2 v_n\Dg\bs{n}: 
\Dg^2 z.
      \end{equation}

     The third term of~(\ref{eq:a0}) contains $\Divg M^T$ which can be computed with the product rules of Lemma~\ref{le:DgProperties} to obtain
     \begin{align*}
      \Divg M^T &=\Divg (\gradg v_n \tensor \bs{n})-\Divg(v_n \Dg\bs{n})\\
      &= \gradg v_n \cdot \Divg \n + \Dg \gradg v_n \n - \Dg \n \gradg v_n - 
v_n 
\Divg(\Dg\n)\\
      &=\kappa \gradg v_n - \Dg\bs{n}\gradg v_n - v_n \lb\bs{n},
     \end{align*}
where we have used that $\kappa=\Divg\bs{n}$ and $\Dg \gradg v_n \n = 
\Dg^2v_n\bs{n}=0$. Since $\lb\bs{n}\cdot\gradg z = (P\lb\bs{n})\cdot\gradg z$,
where $P$ is the orthogonal projection to the tangent plane,
equation (\ref{eq:grad_curvature}) yields $(P\lb\bs{n})=\gradg \kappa$ whence
\begin{equation}\label{eq:a2}
 \Divg M^T\cdot \gradg z =\kappa \gradg v_n\cdot \gradg z- \Dg\bs{n} \gradg v_n 
\cdot \gradg z-v_n \gradg \kappa \cdot \gradg z.
\end{equation}

Finally we add equations (\ref{eq:a1}) and (\ref{eq:a2}) and replace in 
(\ref{eq:a0}) to obtain 
    \begin{equation*}
     (\lb z)'(\Gamma, \bs{V})= \lb z'(\Gamma, \bs{V})-2\Dg\bs{n}\gradg v_n\cdot 
\gradg z - 2 v_n\Dg\bs{n}: \Dg^2 z+
     \kappa \gradg v_n\cdot \gradg z-v_n \gradg \kappa \cdot \gradg z,
    \end{equation*}
which completes the proof.
    \end{proof}

\begin{crllr}[Shape derivative of the first fundamental 
form]\label{cor:shapeder_curvature_Dn}
 For a $\mathcal{C}^3$ surface $\Gamma$ and a smooth velocity field $\bs{V}$, 
the shape derivatives of the mean curvature $\kappa=\Divg\bs{n}$ and the tensor 
$\Dg\bs{n}$ at $\bs{V}$ are given by
 \begin{equation}\label{eq:curvature_shapeder}
 \kappa'(\Gamma, \bs{V}) =-\lb v_n - |\Dg\bs{n}|^2 v_n,
 \end{equation}
    and 
 \begin{equation}\label{eq:derDgamman}
 (\Dg\bs{n})'(\Gamma, \bs{V}) = -\Dg^2v_n+ \Dg\bs{n}\gradg v_n\tensor \bs{n} 
-v_n\Dg\bs{n}^2.\end{equation}   
    \end{crllr}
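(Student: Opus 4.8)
The plan is to derive both formulas as direct applications of the main results already established in this section, using the key identity $\Dg\bs{n} = D^2b|_\Gamma$ together with the shape derivative of the normal from Lemma~\ref{le:normal_shape_der}.

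For the mean curvature formula \eqref{eq:curvature_shapeder}, I would apply \eqref{eq:shape-der-Divg} from Corollary~\ref{cor:main} with $\bs{w} = \bs{n}$, since $\kappa = \Divg\bs{n}$. This gives
\begin{equation*}
\kappa'(\Gamma, \bs{V}) = \Divg \bs{n}'(\Gamma, \bs{V}) + [\bs{n}\tensor\gradg v_n - v_n\Dg\bs{n}]:\Dg\bs{n}.
\end{equation*}
Then I substitute $\bs{n}'(\Gamma, \bs{V}) = -\gradg v_n$ from Lemma~\ref{le:normal_shape_der}, so the first term becomes $-\Divg\gradg v_n = -\lb v_n$. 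For the second term, I use $(\bs{n}\tensor\gradg v_n):\Dg\bs{n} = \bs{n}\cdot \Dg\bs{n}\,\gradg v_n = 0$ since $\Dg\bs{n}$ is symmetric and $\Dg\bs{n}\,\bs{n} = D^2b\,\nabla b|_\Gamma = 0$ by \eqref{eq:eikonal2}; the remaining piece is $-v_n\Dg\bs{n}:\Dg\bs{n} = -v_n|\Dg\bs{n}|^2$, yielding \eqref{eq:curvature_shapeder}. This recovers Lemma~\ref{le:curvature_shape-der}, as promised in the text preceding that lemma.

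For the tensor formula \eqref{eq:derDgamman}, I would apply \eqref{eq:shape-der-Dg} from Corollary~\ref{cor:main}, again with $\bs{w} = \bs{n}$, giving
\begin{equation*}
(\Dg\bs{n})'(\Gamma, \bs{V}) = \Dg\bs{n}'(\Gamma, \bs{V}) + \Dg\bs{n}[\gradg v_n\tensor\bs{n} - v_n\Dg\bs{n}].
\end{equation*}
Substituting $\bs{n}'(\Gamma, \bs{V}) = -\gradg v_n$ turns the first term into $-\Dg\gradg v_n = -\Dg^2 v_n$. In the bracket, the product rule for the tensor product (Lemma~\ref{le:tensor}) gives $\Dg\bs{n}(\gradg v_n\tensor\bs{n}) = (\Dg\bs{n}\gradg v_n)\tensor\bs{n}$, while $\Dg\bs{n}(v_n\Dg\bs{n}) = v_n\Dg\bs{n}^2$. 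Collecting these produces exactly \eqref{eq:derDgamman}.

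I expect the computations to be essentially mechanical once the two corollary formulas are in hand; the only points demanding care are the repeated use of the orthogonality $\Dg\bs{n}\,\bs{n} = 0$ (to kill the $\bs{n}\tensor\gradg v_n$ contribution in the scalar case) and the correct bookkeeping of the tensor-product identities from Lemma~\ref{le:tensor} when simplifying the bracket in the tensor case. The main conceptual obstacle is simply recognizing that $\kappa$ and $\Dg\bs{n}$ are both tangential differential expressions in $\bs{n}$, so that the heavy lifting has already been done in Theorem~\ref{thm:main} and Corollary~\ref{cor:main}; no new distance-function computation is needed here.
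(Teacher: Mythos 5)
Your proposal is correct and follows essentially the same route as the paper: both rest on Lemma~\ref{le:normal_shape_der} ($\bs{n}'=-\gradg v_n$) fed into Corollary~\ref{cor:main}, with the orthogonality $\Dg\bs{n}\,\bs{n}=0$ and the tensor identities of Lemma~\ref{le:tensor} doing the simplification. The only cosmetic difference is order: the paper derives \eqref{eq:derDgamman} first and obtains $\kappa'$ by taking the trace (noting this is equivalent to using \eqref{eq:shape-der-Divg} directly, which is what you do), so nothing of substance differs.
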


    \begin{proof} 
Since, by Lemma \ref{le:normal_shape_der}, $\bs{n}'(\Gamma, \bs{V}) = -\gradg 
v_n$, we will use corollary \ref{cor:main} to obtain the shape derivatives of 
$\Dg\bs{n}$ and $\kappa=\Divg\bs{n}$. From (\ref{eq:shape-der-Dg}) we have
\begin{align*}
(\Dg \bs{n})'(\Gamma, \bs{V}) &= \Dg \bs{n}'(\Gamma, \bs{V}) + 
\Dg\bs{n}[\gradg v_n\tensor \bs{n} - v_n\Dg \bs{n}]\\
&=-\Dg^2v_n+ \Dg\bs{n}\gradg v_n\tensor \bs{n} -v_n\Dg\bs{n}^2,
    \end{align*}
    whence (\ref{eq:derDgamman}) holds.
    To obtain the shape derivative of $\kappa=\Divg\bs{n}$ we can use equation 
(\ref{eq:shape-der-Divg}) or observe that $(\Divg\bs{n})'(\Gamma, 
\bs{V})=\tr(\Dg 
\bs{n})'(\Gamma, \bs{V})$
    and use (\ref{eq:derDgamman}). In both cases, note that
    $\Divg \gradg v_n = \tr(\Dg^2 v_n)= \lb v_n $ and $\Dg\bs{n}:\Dg\bs{n} = 
\tr(\Dg\bs{n}^2)= |\Dg\bs{n}|^2$. Also, since $\Dg \bs{n} \bs{n} =0$, we have
    \[
    \tr(D_\Gamma \n \gradg v_n \tensor \n) = \bs{n}\tensor\gradg v_n :\Dg\bs{n} = \Dg\bs{n}\gradg v_n \cdot \bs{n} 
=0.
\]
 We have thus obtained $(\Divg\bs{n})'(\Gamma, \bs{V})= \kappa'(\Gamma, \bs{V})= -\lb v_n - 
|\Dg\bs{n}|^2 v_n$.
    \end{proof}
  
    \section{Geometric invariants and Gaussian curvature}\label{sec:invariants}
    
   The geometric invariants of a $\mathcal{C}^2$-surface $\Gamma$ allow us to define its 
intrinsic properties. 
They are defined as the geometric invariants of the tensor $\Dg \n$, which, in turn, are the coefficients of its characteristic polynomial $p(\lambda)$ (see \cite{Li2007}).
%can be defined through the tensor $\Dg{n}$, where $\n$ is the unit normal field of a regular surface $\Gamma$ of dimension $N-1$ in $\R^N$.  We quote \cite{Li2007} for the definition of invariants of a tensor.
The geometric invariants of $\Gamma$, $i_j(\Gamma): \Gamma 
\rightarrow \R$, $j= 1,\dots, N$,  thus satisfy
%are defined as the invariants of the $N\times N$ tensor $\Dg \bs{n}$. That is, for each $X\in\Gamma$, if $P(\lambda)$ is the characteristic polynomial of $\Dg\bs{n}(X)$, the scalars $i_j =i_j(\Gamma)(X)$ will satisfy 
   \[
   p(\lambda)=\det(\Dg\bs{n}(X)-\lambda I)=\lambda^N + i_1 \lambda^{N-1}+
   i_2 \lambda^{N-2}+ ... + i_{N-1}\lambda + i_N,
   \]
and can also be expressed using the eigenvalues of the 
tensor $\Dg \bs{n}$, which  are zero and the principal curvatures 
$\kappa_1,\dots,\kappa_{N-1}$, namely
\[i_1(\Gamma) = \sum_{j=1}^{N-1}\kappa_j,\quad
i_2(\Gamma) = \sum_{j_1\neq j_2}\kappa_{j_1}\kappa_{j_2},\quad 
\dots,\quad
i_{N-1}(\Gamma)= \kappa_1 \dots \kappa_{N-1},\quad
i_N(\Gamma)=0.
\]
   
   We can observe from definitions (\ref{curvatures}) that the first invariant 
$i_1(\Gamma)$ is the \emph{mean curvature} $\kappa$ and 
   the last nonzero invariant $i_{N-1}(\Gamma)$ is the \emph{Gaussian 
curvature} 
$\kappa_g$.  
   The invariant $i_k(\Gamma)$, for $2\leq k \leq N-2$, is the so-called $k$-$th$ 
\emph{mean curvature}~\cite[Ch.~3F]{Kuhnel2013}.
   
The geometric invariants of $\Gamma$ can also be defined through the 
functions
$I_p(\Gamma):\Gamma \rightarrow \R$, given by $I_p(\Gamma)= 
\tr(\Dg\bs{n}^p)$, $p=1, ..., N-1$. 
In particular, the first 4 invariants are
  $$i_1 = I_1=\Divg \bs{n},$$
  $$i_2 = \frac{1}{2!}\left(I_1^2-I_2\right)= 
\frac{1}{2}\left(\kappa^2-|\Dg\bs{n}|^2\right),$$
  $$i_3 = \frac{1}{3!}\left(I_1^3-3I_1I_2+2I_3\right),$$
  $$i_4 = \frac{1}{4!}\left(I_1^4-6I_1^2I_2+3I_2^2+8 I_1I_3 -6I_4\right).$$

   We will now establish the shape derivatives of the functions 
$I_p(\Gamma)=\tr(\Dg \n^p)$, which are also intrinsic to the surface $\Gamma$ 
and will lead to the shape derivatives of the geometric invariants 
$i_k(\Gamma)$. 
   
   \begin{prpstn}[Shape derivatives of the invariants]\label{prop:derIp}
   Let $\Gamma$ be a $\mathcal{C}^2$-boundary in $\R^N$ and $p$ a positive 
integer.
   The shape derivative of the scalar valued boundary function 
$I_p(\Gamma)\definedas \tr(\Dg\bs{n}^p)$ at $\Gamma$ in the direction 
$\bs{V}\in\mathcal{C}([0,\epsilon], V^2(\D))$ is given by
   \begin{equation*}\label{eq:prop-derIp}
   (I_p)'(\Gamma, \bs{V}) = -p\left(\Dg^2v_n : \Dg\bs{n}^{p-1}+v_n I_{p+1} 
\right),
   \end{equation*}
where $v_n=\bs{V(0)}\cdot \bs{n}$.
   \end{prpstn}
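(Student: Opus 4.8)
The plan is to view $I_p$ as the composition of the tensor-valued boundary function $\Dg\bs{n}$ with the smooth scalar-valued map $F\colon \Lin(\Rn[N])\to\R$ given by $F(A)=\tr(A^p)$, and then to combine the chain rule of Lemma~\ref{le:chain-rule} with the shape derivative of $\Dg\bs{n}$ already established in Corollary~\ref{cor:shapeder_curvature_Dn}. First I would record the classical derivative of $F$: by the cyclic invariance of the trace, $\frac{d}{dt}\tr((A+tH)^p)|_{t=0}=\sum_{k=0}^{p-1}\tr(A^k H A^{p-1-k})=p\,\tr(A^{p-1}H)$, so that $DF(A)[H]=p\,\tr(A^{p-1}H)$. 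Since $I_p=F\circ\Dg\bs{n}$, Lemma~\ref{le:chain-rule} then gives
\begin{equation*}
(I_p)'(\Gamma,\bs{V})=DF(\Dg\bs{n})\big[(\Dg\bs{n})'(\Gamma,\bs{V})\big]
=p\,\tr\!\big(\Dg\bs{n}^{p-1}\,(\Dg\bs{n})'(\Gamma,\bs{V})\big).
\end{equation*}

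Next I would substitute formula~(\ref{eq:derDgamman}), namely $(\Dg\bs{n})'(\Gamma,\bs{V})=-\Dg^2 v_n+\Dg\bs{n}\gradg v_n\tensor\bs{n}-v_n\Dg\bs{n}^2$, and treat the three resulting traces separately. The third is immediate: $\tr(\Dg\bs{n}^{p-1}\,v_n\Dg\bs{n}^2)=v_n\,\tr(\Dg\bs{n}^{p+1})=v_n I_{p+1}$. For the first, since $\Dg\bs{n}=D^2b|_\Gamma$ is symmetric, so is $\Dg\bs{n}^{p-1}$; hence by the definition $S:T=\tr(S^T T)$ and the symmetry of the scalar product (Lemma~\ref{le:tensor}), $\tr(\Dg\bs{n}^{p-1}\Dg^2 v_n)=\Dg\bs{n}^{p-1}:\Dg^2 v_n=\Dg^2 v_n:\Dg\bs{n}^{p-1}$.

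The only term requiring care is the middle one, which I expect to be the main (though mild) obstacle. Using the tensor identities $S(\bs{u}\tensor\bs{v})=S\bs{u}\tensor\bs{v}$ and $\tr(\bs{u}\tensor\bs{v})=\bs{u}\cdot\bs{v}$ from Lemma~\ref{le:tensor}, together with the symmetry of $\Dg\bs{n}$, I would compute
\begin{equation*}
\tr\!\big(\Dg\bs{n}^{p-1}(\Dg\bs{n}\gradg v_n\tensor\bs{n})\big)
=\tr\!\big((\Dg\bs{n}^{p}\gradg v_n)\tensor\bs{n}\big)
=(\Dg\bs{n}^{p}\gradg v_n)\cdot\bs{n}
=\gradg v_n\cdot\Dg\bs{n}^{p}\bs{n}.
\end{equation*}
Because $\Dg\bs{n}\,\bs{n}=D^2b\,\nabla b|_\Gamma=0$ by~(\ref{eq:eikonal2}), we have $\Dg\bs{n}^{p}\bs{n}=0$ for $p\ge1$, so this contribution vanishes. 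Collecting the three pieces yields $(I_p)'(\Gamma,\bs{V})=-p\big(\Dg^2 v_n:\Dg\bs{n}^{p-1}+v_n I_{p+1}\big)$, which is exactly the claimed identity. The substantive input is Corollary~\ref{cor:shapeder_curvature_Dn}; everything else is linear algebra, with the vanishing of the middle term being the one nonobvious cancellation. I would also note that, as in Corollary~\ref{cor:shapeder_curvature_Dn}, the regularity needed to invoke the shape derivative of $\Dg\bs{n}$ is the point where the smoothness hypotheses on $\Gamma$ and $\bs{V}$ are actually used.
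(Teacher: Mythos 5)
Your proof is correct, and it reaches the key reduction $(I_p)'(\Gamma,\bs{V})=p\,\tr\bigl(\Dg\bs{n}^{p-1}(\Dg\bs{n})'(\Gamma,\bs{V})\bigr)$ by a genuinely different route than the paper. The paper does not invoke the chain rule of Lemma~\ref{le:chain-rule} at all; instead it writes $I_p'=(\Dg\bs{n}^p)'(\Gamma,\bs{V}):\Dg\bs{n}^0$ and proves a separate auxiliary result (Lemma~\ref{le:simetric-tensor}): for a symmetric shape-differentiable tensor field $A$, $(A^p)'(\Gamma,\bs{V}):A^j=p\bigl(A'(\Gamma,\bs{V}):A^{j+p-1}\bigr)$ for all $j\ge 0$, established by induction on $p$ using the product rule for shape derivatives and the cyclic-type tensor identities of Lemma~\ref{le:tensor}. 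Your replacement of that induction by the one-line Fr\'echet derivative $DF(A)[H]=p\,\tr(A^{p-1}H)$ of $F(A)=\tr(A^p)$ (via cyclic invariance of the trace, valid for arbitrary $A$) is cleaner and shifts the combinatorial work from the shape-calculus side to ordinary multivariable calculus; what it costs is reliance on Lemma~\ref{le:chain-rule} being applicable to tensor-valued boundary functions, which the paper's statement of that lemma does allow, whereas the paper's route needs only the product rule of Remark~\ref{R:product shape derivatives}. The final step --- substituting \eqref{eq:derDgamman} and checking that the middle term dies because $\Dg\bs{n}\,\bs{n}=0$ while the other two give $-\Dg^2v_n:\Dg\bs{n}^{p-1}$ and $-v_nI_{p+1}$ --- is essentially identical in both arguments, and your handling of the symmetry of $\Dg\bs{n}^{p-1}$ in converting traces to colon products is accurate.
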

   
   For the proof of this proposition we will need the following Lemma.
   \begin{lmm}\label{le:simetric-tensor}
   Let $A(\Gamma):\Gamma \rightarrow \Lin(\mathbb{V})$ be a symmetric 
tensor valued function and let $p$ be a positive integer. If $A(\Gamma)$ is 
shape differentiable at $\Gamma$ in the direction $\bs{V}$, then
   the shape derivative of $A^p(\Gamma)$ satisfies
   \begin{equation}\label{eq:simetric-tensor}
   (A^p)'(\Gamma, \bs{V}):A^j =p\left(A'(\Gamma, \bs{V}):A^{j+p-1}\right),
   \end{equation}
   for any integer $j\geq 0$.
   \end{lmm}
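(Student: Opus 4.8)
The plan is to reduce the identity to the standard Leibniz expansion of the shape derivative of a tensor power, and then perform a purely algebraic manipulation of the trace scalar product that exploits only the symmetry of $A$.

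First I would establish that the shape derivative of the $p$-th power obeys the Leibniz rule
\[
(A^p)'(\Gamma, \bs{V}) = \sum_{k=0}^{p-1} A^k\, A'(\Gamma, \bs{V})\, A^{p-1-k}.
\]
This can be obtained either by iterating the product rule for shape derivatives (Remark~\ref{R:product shape derivatives}), writing $A^p = A\,A^{p-1}$ and inducting on $p$, or, more directly, by applying the chain rule of Lemma~\ref{le:chain-rule} to the smooth map $F\colon \Lin(\V)\to\Lin(\V)$ given by $F(S)=S^p$, whose differential is $DF(A)[H]=\sum_{k=0}^{p-1}A^k H A^{p-1-k}$, evaluated at $H = A'(\Gamma,\bs{V})$.

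Next I would take the tensor scalar product with $A^j$ and treat each summand separately,
\[
(A^p)'(\Gamma, \bs{V}):A^j = \sum_{k=0}^{p-1} \left(A^k\, A'(\Gamma, \bs{V})\, A^{p-1-k}\right):A^j,
\]
and show that every summand equals $A'(\Gamma, \bs{V}):A^{j+p-1}$, independently of $k$. For this I would invoke the identities $ST:P = T:S^T P$ and $ST:P = S:PT^T$ from Lemma~\ref{le:tensor}, together with the fact that every power of the symmetric tensor $A$ is itself symmetric, so $(A^m)^T = A^m$. Grouping $A^k$ with $S$ and applying the first identity moves $A^k$ to the right slot; grouping $A^{p-1-k}$ with $T$ and applying the second moves that block across the product, giving
\[
\left(A^k\, A'\, A^{p-1-k}\right):A^j = (A'\,A^{p-1-k}):A^{k+j} = A':A^{k+j}\,A^{p-1-k} = A':A^{j+p-1}.
\]
Summing the $p$ identical terms produces the factor $p$ and yields \eqref{eq:simetric-tensor}.

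The main subtlety to keep in mind is that $A'(\Gamma, \bs{V})$ need \emph{not} be symmetric, even though $A$ is; hence I cannot symmetrize it or move it freely through the product. The reduction must rely solely on the symmetry of the \emph{powers} of $A$ and on the rearrangement encoded in the identities of Lemma~\ref{le:tensor}, keeping $A'$ fixed in a single slot throughout while sliding the symmetric factors $A^k$ and $A^{p-1-k}$ around it. It is exactly this asymmetry of $A'$ that explains why the result is phrased as a contraction against the symmetric tensor $A^{j+p-1}$ rather than as an identity between tensors.
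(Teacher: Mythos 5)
Your argument is correct, but it takes a different route from the paper. The paper proves the identity by induction directly on the \emph{contracted} (scalar) statement: it writes $A^{p+1}=A^pA$, applies the product rule once to get $(A^{p+1})':A^j=(A^p)'A:A^j+A^pA':A^j$, uses the identities $BC:D=B:DC^T=C:B^TD$ together with the symmetry of $A$ to rewrite this as $(A^p)':A^{j+1}+A':A^{j+p}$, and then invokes the inductive hypothesis with $j$ replaced by $j+1$; this is why the statement is quantified over all $j\geq 0$ rather than proved for a single $j$. You instead first establish the full tensor-valued Leibniz expansion $(A^p)'=\sum_{k=0}^{p-1}A^kA'A^{p-1-k}$ (itself requiring an induction, or the chain rule of Lemma~\ref{le:chain-rule} applied to $S\mapsto S^p$), and only then contract each summand against $A^j$, sliding the symmetric powers of $A$ around the fixed, possibly non-symmetric factor $A'$ exactly as the paper does in its single rearrangement step. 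Your computation $\left(A^kA'A^{p-1-k}\right):A^j=A':A^{j+p-1}$ is correct and makes the origin of the factor $p$ completely transparent ($p$ identical terms), at the cost of carrying a stronger intermediate tensor identity; the paper's version is more economical in that it never needs the uncontracted Leibniz formula, but it hides the factor $p$ inside the induction. Your closing remark about the non-symmetry of $A'$ being the reason the result is stated only as a contraction is a genuine insight that the paper leaves implicit.
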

   \begin{proof}
   We will proceed by induction. It is trivial to see that equation 
(\ref{eq:simetric-tensor}) holds for $p=1$ and any integer $j\geq 0$.
   
   Assuming that  equation (\ref{eq:simetric-tensor}) holds for 
$p\geq 1$ and any $j\geq 0$, we want to prove
   \begin{equation}\label{eq:simetric-tensor2}
   (A^{p+1})'(\Gamma, \bs{V}):A^j =(p+1)\left(A'(\Gamma, 
\bs{V}):A^{j+p}\right), \ \ \ \text{ for any integer } j\geq 0.
   \end{equation}
   Applying the product rule for the shape derivative to 
$A^{p+1}=A^pA$, we have 
   \begin{equation*}
   (A^{p+1})'(\Gamma, \bs{V}):A^j = (A^{p})'(\Gamma, \bs{V})A:A^j 
+A^p A'(\Gamma, \bs{V}):A^j.
   \end{equation*}
   The tensor product property $BC:D=B:DC^T = C:B^TD$ and the fact 
that the tensor $A$ is symmetric, yield
   \begin{equation}\label{eq:simetric-tensor3}
   (A^{p+1})'(\Gamma, \bs{V}):A^j =(A^{p})'(\Gamma, 
\bs{V}):A^{j+1}+
   A'(\Gamma, \bs{V}):A^{j+p}.
   \end{equation}
    
    The inductive assumption for $p$ and $j+1$ implies 
    $(A^p)'(\Gamma, \bs{V}):A^{j+1} =p\left(A'(\Gamma, 
\bs{V}):A^{j+p}\right)$. Using this in equation (\ref{eq:simetric-tensor3}), we 
obtain the desired result (\ref{eq:simetric-tensor2}).
   \end{proof}
   
   \begin{proof}[Proof of Proposition \ref{prop:derIp}]
   First note that $I_p'(\Gamma,\bs{V})= \tr(\Dg\bs{n}^p)'(\Gamma, \bs{V})= 
(\Dg\bs{n}^p)'(\Gamma, \bs{V}) : \Dg\bs{n}^0$. Then Lemma~\ref{le:simetric-tensor} with $j=0$ and $A=\Dg\bs{n}$, which is a symmetric tensor, lead to
   \begin{equation*}
   I_p'(\Gamma, \bs{V})= p\left(\Dg\bs{n}'(\Gamma, 
\bs{V}):\Dg\bs{n}^{p-1}\right).
   \end{equation*} 
   
   From formula (\ref{eq:derDgamman}) we have that
   $(\Dg\bs{n})'(\Gamma, \bs{n}):\Dg\bs{n}^{p-1} = -\Dg^2v_n: 
\Dg\bs{n}^{p-1}  - v_n I_{p+1}(\Gamma)$,
where we have used that 
$\Dg\bs{n}\gradg v_n \tensor \bs{n} 
:\Dg\bs{n}^{p-1}=0$ for any integer $p\geq 1$. This completes the proof.
   \end{proof}
   
   We now obtain the shape derivatives of the geometric invariants, which will 
give us, as particular cases, the shape derivatives of the Gaussian and mean 
curvatures.  The goal is to obtain them in terms of the geometric invariants.

We start with $i_1 = \kappa$:
   \begin{equation*}
   i_1'(\Gamma, \bs{v}) = I_1'(\Gamma, \bs{v}) = -\Dg^2v_n : 
\Dg\bs{n}^0-v_n 
I_{2} = - \lb v_n -v_n I_2,
   \end{equation*}
    which is consistent with the previous result (\ref{eq:curvature_shapeder}).
   Since $I_2=i_1^2-2i_2,$
   \begin{equation}\label{eq:firstinv}
   i_1'(\Gamma, \bs{v})=  - \lb v_n -v_n i_1^2(\Gamma) +2v_ni_2(\Gamma).
   \end{equation}
   
   For the second invariant, note that
   $I_2'(\Gamma, \bs{v}) = -2\left(\Dg^2v_n : \Dg\bs{n}+v_n I_{3} \right)$.
   Since $i_2=\frac{1}{2}(I_1^2-I_2)$, we have
   \begin{align*}
  i_2'(\Gamma, \bs{V}) &= I_1 I_1'(\Gamma, \bs{V})-\frac{1}{2}I_2'(\Gamma, 
\bs{V}) = -I_1\lb v_n-v_n I_1 I_2+ \Dg^2v_n : \Dg\bs{n}+v_n I_{3}\\
  &= -I_1 \lb v_n + \Dg^2v_n : \Dg\bs{n} + v_n(I_3-I_1I_2).
   \end{align*}
   
   To obtain a formula only involving the invariants $i_k$, observe that 
$i_3\definedas\frac{1}{3!} (I_1^3- 3 I_1 I_2 + 2 I_3)=\frac{1}{3}(I_3-I_1I_2 +i_1 i_2)$, as can be checked by replacing $i_1$ by 
$I_1$ and $i_2$ by $\frac12 (I_1^2-I_2)$ in the right-hand side, whence
   \begin{equation}\label{eq:secondinv}
   i_2'(\Gamma, \bs{V})=-i_1 \lb v_n +\Dg^2v_n : \Dg\bs{n} +v_n(3i_3-i_1i_2).
   \end{equation}
   
    If $N=3$, the Gaussian curvature $\kappa_g$ is the second 
invariant 
$i_2(\Gamma)$. Then, on the one hand, from (\ref{eq:firstinv}), we have the following expression for the 
shape derivative of the mean curvature $\kappa$ in terms of $\kappa_g$:
    \begin{equation}\label{eq:shape-der-k-kg}
    \kappa'(\Gamma, \bs{v})=  - \lb v_n -v_n \kappa^2 +2v_n \kappa_g.    
    \end{equation}
    
     On the other hand, since $i_3=0$ for $N=3$, we obtain from (\ref{eq:secondinv}) the 
following formula for the shape derivative of the Gaussian curvature.
    
    \begin{thrm}[Shape derivative of the Gauss curvature]
    For a $\mathcal{C}^2$-surface $\Gamma$ in $\Rn[3]$, the shape derivative 
of the Gaussian curvature $\kappa_g$ is given by
    \begin{equation*}
    \kappa_g'(\Gamma, \bs{V})=-\kappa \lb v_n +\Dg^2v_n : \Dg\bs{n} -v_n\kappa 
\kappa_g,
    \end{equation*}
    where $\kappa$ is the mean or additive curvature, $\bs{n}$ the normal 
vector 
field and $v_n=\bs{V}(0)\cdot \bs{n}$.
    \end{thrm}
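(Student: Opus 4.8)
The plan is to obtain this formula as a direct specialization of the shape-derivative of the second geometric invariant, already established in equation~(\ref{eq:secondinv}), rather than starting the computation from scratch. The first thing I would do is record the dimensional identifications valid for $N=3$: the Gaussian curvature coincides with the second invariant, $\kappa_g = i_2(\Gamma)$, since $i_{N-1}$ is the Gaussian curvature and here $N-1=2$; and the mean curvature is the first invariant, $\kappa = i_1(\Gamma)$. Thus the left-hand side of~(\ref{eq:secondinv}) is precisely $\kappa_g'(\Gamma,\bs{V})$, and the coefficients appearing on the right are exactly $\kappa$ and $\kappa_g$.

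The simplification specific to $N=3$ is that the third invariant vanishes identically, $i_3 = i_N = 0$; indeed $i_N$ is the constant term $p(0) = \det(\Dg\bs{n})$ of the characteristic polynomial, which is zero because $\Dg\bs{n}\,\bs{n}=0$ makes $\Dg\bs{n}$ singular. Substituting $i_1=\kappa$, $i_2=\kappa_g$ and $i_3=0$ into~(\ref{eq:secondinv}) collapses its last term from $v_n(3i_3 - i_1 i_2)$ to $-v_n\kappa\kappa_g$, leaving
\begin{equation*}
\kappa_g'(\Gamma,\bs{V}) = -\kappa\,\lb v_n + \Dg^2 v_n : \Dg\bs{n} - v_n\kappa\kappa_g,
\end{equation*}
which is the claimed formula.

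Since all the analytic content is carried by Proposition~\ref{prop:derIp} and the auxiliary Lemma~\ref{le:simetric-tensor} used to derive~(\ref{eq:secondinv}), there is no genuine obstacle in this final step; the only point requiring care is the bookkeeping of which invariant equals $\kappa_g$ and the use of $i_3=0$ in three dimensions. As an independent verification I would differentiate the identity~(\ref{gcurvature2}), $\kappa_g = \frac12(\kappa^2 - |\Dg\bs{n}|^2) = \frac12(I_1^2 - I_2)$, by the product rule: combining $\kappa'(\Gamma,\bs{V}) = -\lb v_n - |\Dg\bs{n}|^2 v_n$ from~(\ref{eq:curvature_shapeder}) with $(I_2)'(\Gamma,\bs{V}) = -2(\Dg^2 v_n : \Dg\bs{n} + v_n I_3)$ from Proposition~\ref{prop:derIp} at $p=2$, and then reducing $I_2 = \kappa^2 - 2\kappa_g$ and $I_3 = \kappa^3 - 3\kappa\kappa_g$ (the latter forced by $i_3=0$), reproduces the same expression term by term. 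The agreement of the two routes confirms in particular the sign of the final term $-v_n\kappa\kappa_g$.
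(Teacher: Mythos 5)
Your proposal is correct and follows exactly the paper's own route: the theorem is obtained by specializing~(\ref{eq:secondinv}) to $N=3$ with $i_1=\kappa$, $i_2=\kappa_g$ and $i_3=0$. The additional cross-check via $\kappa_g=\frac12(I_1^2-I_2)$ and Proposition~\ref{prop:derIp} is a sound (if redundant) confirmation of the sign of the $-v_n\kappa\kappa_g$ term.
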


\section{Application: A Newton-type method}\label{s:newton}
Most of shape optimization problems consist in finding a minimum of some 
functional restricted to a family of admissible sets (domains or surfaces), e.g., 
\begin{equation}\label{prob:min}
\Gamma_*=\argmin_{\Gamma\in \Ad} J(\Gamma) 
\end{equation}
For example, given a regular curve $\gamma$ in $\Rn[3]$, a minimal surface 
$\Gamma$ with boundary $\gamma$ is a solution of (\ref{prob:min})
with $J(\Gamma) =\int_{\Gamma} d\Gamma$, the area functional, and the admissible 
family $\Ad=\Ad(\gamma)$ consisting of all regular $2$ dimensional surfaces in 
$\Rn[3]$ with boundary $\gamma$. 

If $J$ is shape differentiable in $\Ad$, and $\Gamma_*$ is a minimizer, then $dJ(\Gamma_*, \bs{v})=0$ for all $\bs{v} \in \bs{\Vel}$, where $\bs{\Vel}$ is a vector space of admissible autonomous velocities, for 
example $\bs{\Vel} =\mathcal{C}^k_c(\D, \Rn[N])$.

We thus focus our attention in the following alternative problem:
\begin{equation}\label{prob:critic}
\text{Find } \Gamma_*\in \Ad :\qquad dJ(\Gamma_*, \bs{v})= 0,\quad  \text{ for all 
} 
\bs{v}\in\bs{\Vel}:= \mathcal{C}^k_c(\D, \Rn[N]).
\end{equation}

An interesting scheme to approximate the solutions of~\eqref{prob:critic} for 
surfaces of prescribed constant mean curvature was presented in~\cite{CMP2016}. 
There, results from numerical experiments document its performance and fast convergence.
The scheme was a variation of the Newton algorithm, which needs the computation of
second derivatives of the shape functional. The computations there were tailored to the specific problem of prescribed mean curvature, and based on variational calculus using parametrizations, rather than using shape calculus.

We first observe that, due to the structure Theorem (Theorem~\ref{t:structure}), Problem \eqref{prob:critic} is equivalent
to the following:
\begin{equation}\label{prob:critic_normal0}
\text{Find } \Gamma_*\in \Ad :\qquad dJ(\Gamma_*, v\nabla b_{\Gamma_*})= 0,\quad  \text{ for 
all } 
v\in\Vel_* \definedas \left\{ w \in \Vel : \left.\frac{\partial w}{\partial\n_*}\right|_{\Gamma_*} = 0 \right\},
\end{equation}
where $b_{\Gamma_*}\definedas b(\Omega_*)$ is the signed distance function corresponding to the domain $\Omega_*$ whose boundary is $\Gamma_*$, $\Vel = \mathcal{C}^k_c(\D)$ and $n_* = \nabla b_{\Gamma_*}$ is the normal vector to $\Gamma_*$.
% For an admissible velocity $\bs{v}$ we 
% expect that $\Gamma_t=x(t)(\Gamma) \in \Ad$, for all small enough $t>0$, 
% where 
% $x$ is the trajectory induced by $\bs{v}$, given by (\ref{eq:autonomous 
% trajectories}).

We now present a scheme to approximate the solution of~\eqref{prob:critic_normal0} 
using a Newton-type method that generalizes the idea of~\cite{CMP2016}
in at least two ways. First, it uses the language of shape derivatives and secondly, it 
has the potential to work for a large class of shape functionals, not just the area functional.

We start by defining, for each $\Gamma \in \Ad$ and $v\in \Vel$, the functional $J_v(\Gamma) = dJ(\Gamma, v \nabla b_\Gamma)$, so that the solution $\Gamma_*$ satisfies $J_v(\Gamma_*) = 0$ for all $v \in \Vel_*$.
Assume now that $\Gamma_0\in\Ad$ is sufficiently close to the solution $\Gamma_*$ so that there exists $\bs{u}\in\bs{\Vel}$ (small, in some sense) such that $\Gamma_* := \Gamma_0+\bs{u}$, in the sense of Remark~\ref{r:taylor}; this Remark also implies that
%%
%Using Remark \ref{r:taylor}
 \begin{equation}
 J_v (\Gamma_*) = J_v(\Gamma_0 + \bs{u}) = J_v (\Gamma_0) +  dJ_v(\Gamma_0,\bs{u}) + o(|\bs{u}|).
 \end{equation}
The goal of finding $\Gamma_* = \Gamma_0 + \bs{u}$ such that $J_v(\Gamma_0 + \bs{u})=0$ is now switched to a simplified problem of finding $\bs{u}_0$ such that the linear approximation of $J_v$ around $\Gamma_0$ vanishes at $\Gamma_1 \definedas \Gamma_0 + \bs{u_0}$, i.e., $J_v (\Gamma_0) +  dJ_v(\Gamma_0,\bs{u}_0) = 0$. Another simplification arises when asking this equality to hold for all $v \in \Vel_0 \definedas \left\{ w \in \Vel : \frac{\partial w}{\partial\n}|_{\Gamma_0} = 0 \right\}$ (instead of $\Vel_*$ or $\Vel_1$). 

Since $dJ_v(\Gamma_0,\bs{u}_0)$ only depends on the normal component of $\bs{u}_0$ on $\Gamma_0$, this last problem has multiple solutions, so we restrict it by considering normal velocities of the form $\bs{u}_0 = u_0 \n_0$ with $u_0 \in V(\Gamma_0) = C^k(\Gamma_0)$ (and $\n_0$ the normal vector to $\Gamma_0$), and arrive at the following problem:
\begin{equation}
\text{Find } u_0 \in V(\Gamma_0) : \quad  J_v (\Gamma_0) +  dJ_v(\Gamma_0,u_0\n_0)=0 \quad \forall 
v\in \Vel_0. 
\end{equation}
Finally, define $\Gamma_1 = \Gamma_0 + u_0 \n_0$. This sets the basis for an iterative method that will be implemented and further investigated in forthcoming articles.

 \subsection{Area and Willmore functionals}
 
 We end this paper with the precise form of the second order shape derivatives of two important functionals: the area functional and the Willmore functional.
 
 For a shape differentiable boundary functional $J(\Gamma)=\int_{\Gamma} z(\Gamma)$, and a function $v \in C_c^k(\D)$, the functional $J_v(\Gamma) := dJ(\Gamma, v \nabla b_\Gamma)$ is given by $J_v (\Gamma)= \int_{\Gamma} z_v(\Gamma)+\kappa z(\Gamma) v$, where $z_v(\Gamma)=z'(\Gamma;v \nabla b_\Gamma)$. 
 Hence~(\ref{eq:integral-boundary-shape-der2}) yields %the shape derivative of $J_v(\Gamma)$ is
 \[
dJ_v (\Gamma,\bs{u}) = 
 \int_{\Gamma} z_v'(\Gamma,\bs{u})+
 \big(\kappa'(\Gamma,\bs{u}) z(\Gamma) + \kappa(\Gamma) z'(\Gamma,\bs{u})\big)v
 + \kappa(\Gamma) z(\Gamma)v'(\Gamma,\bs{u})
 + \big(z_v(\Gamma)+ \kappa(\Gamma) z(\Gamma) v\big) \kappa(\Gamma) u,
 \]
 where $u=\bs{u}\cdot \nabla b_\Gamma$. %we have used  $u$ to denote shape derivatives in the direction of the vector $\bs{u}=u \nabla b_\Gamma$.
 
 Since $v$ does not depend on $\Gamma$, Definitions~\ref{def:material-der} and \ref{def:shape-der-boundary-scalar} yield $v'(\Gamma, \bs{u})=\dot v(\Gamma,\bs{u}) - \gradg v\cdot\bs{u} = \nabla v \cdot \bs{u} - \gradg v \cdot \bs{u} = \frac{\partial v}{\partial n}u$. 
 Recall from~(\ref{eq:curvature_shapeder}) that $\kappa'(\Gamma, u)=-\lb u -u |\Dg\bs{n}|^2$. 
 Using the second invariant $i_2(\Gamma)=\frac{1}{2}(\kappa^2 - |D_\Gamma \n|^2)$, we can write
 \begin{equation}\label{eq:integral-formula}
dJ_v (\Gamma,\bs{u}) 
=\int_{\Gamma} 2i_2(\Gamma) z(\Gamma)\,uv +v \left(z_u(\Gamma)\kappa(\Gamma)-\lb u\, z(\Gamma)\right)+
u\,\kappa(\Gamma)\left(\frac{\partial v}{\partial n} z(\Gamma) +z_v(\Gamma)\right) + z_v'(\Gamma, \bs{u}).
 \end{equation}

We now apply formula \eqref{eq:integral-formula} to two examples of boundary 
functionals which make use of the results of previous sections to obtain 
useful formulas for $J_v(\Gamma)$ and $dJ_v(\Gamma, u)$.
%%%
\subsubsection{Area Functional}
%%%
For the area functional $J(\Gamma)= \int_{\Gamma} d\Gamma$, we have 
$z(\Gamma)\equiv 1$, $z_v(\Gamma) \equiv 0$ and $z_v'(\Gamma,u) \equiv 0$. Then 
$J_v(\Gamma)=\int_\Gamma \kappa(\Gamma) v$ and by (\ref{eq:integral-formula})
\[
dJ_v(\Gamma, \bs{u})=\int_\Gamma 2i_2(\Gamma) u v +\gradg v\cdot \gradg u
+ u\frac{\partial v}{\partial n}\kappa(\Gamma),
\]
where we have used an integration by parts formula, to replace $\int_\Gamma -\lb u \, v $ by $\int_\Gamma \gradg u \cdot \gradg v$.

\subsubsection{Willmore functional}
For the Willmore functional 
$J(\Gamma)=\int_{\Gamma}\frac{1}{2}\kappa(\Gamma)^2$ we have 
$z(\Gamma)=\frac{1}{2}\kappa(\Gamma)^2$ and by the product rule for shape derivatives (Remark~\ref{R:product shape derivatives})
$z_v(\Gamma)= z'(\Gamma,v\nabla b_\Gamma)=\kappa(\Gamma) \kappa'(\Gamma,v\nabla b_\Gamma) = -\kappa(\Gamma) \big(\lb v +v I_2(\Gamma)\big)$.
In order to apply formula~(\ref{eq:integral-formula}) we need to compute 
\[
z_v'(\Gamma,\bs{u}) = -\kappa'(\Gamma,\bs{u})\big(\lb v +v I_2(\Gamma)\big)
- \kappa(\Gamma) \big((\lb v)'(\Gamma,\bs{u}) +v'(\Gamma,\bs{u}) I_2(\Gamma)+v I_2'(\Gamma,\bs{u})\big) . 
\]
Recall that $\kappa'(\Gamma,\bs{u})=-\lb u -u |\Dg\bs{n}|^2$,  
$I_2'(\Gamma,\bs{u})=-2\left(\Dg^2u : \Dg\bs{n}+u I_3(\Gamma)\right)$ by Proposition~\ref{prop:derIp}, 
$v'(\Gamma,\bs{u})=u\frac{\partial v}{\partial n}$, and that
the shape derivative of $\lb v$ is, by Theorem 
\ref{thm:shape-der-lb},
\[
(\lb v)'(\Gamma, u)=\lb (u\frac{\partial v}{\partial n}) 
-u\left(2\Dg\bs{n}:\Dg^2v +\gradg \kappa \cdot \gradg v 
\right)
+  \gradg u \cdot \left( \kappa \gradg v - 2\Dg\bs{n}\gradg 
v\right).
\]
Putting all these ingredients together we obtain
\begin{align*}
dJ_v(\Gamma,\bs{u}) = {}& \int_\Gamma i_2(\Gamma)\kappa(\Gamma)^2 \, u\, v
%\\ &
- \kappa(\Gamma)^2 \, \lb u \, v 
- 2\kappa(\Gamma)^2 |D_\Gamma\n|^2 u \, v 
- \frac{\kappa(\Gamma)^2}2 \, \lb u \, v\\
&+ \frac{\kappa(\Gamma)^3}{2} \, u \frac{\partial v}{\partial n} - \kappa(\Gamma)^2 \, u \, \lb v 
% - \kappa(\Gamma)^2 | D_\Gamma \n|^2 \, u \, v \\ &
+ \big( \lb u + | D_\Gamma \n|^2 u\big) \big( \lb v + | D_\Gamma \n|^2 v\big)\\
&-\kappa(\Gamma) \lb(u \frac{\partial v}{\partial n}) + 2\kappa(\Gamma) \, u D_\Gamma^2v:D_\Gamma \n  + \kappa(\Gamma) u \gradg v \cdot \gradg \kappa(\Gamma) - \kappa(\Gamma)^2 \gradg u \cdot \gradg v \\ &+ 2 \kappa(\Gamma) \gradg u \cdot D_\Gamma \n \gradg v 
%\\ &
- \kappa(\Gamma) | D_\Gamma \n|^2 u \frac{\partial v}{\partial n} 
%\\ &
+ 2 \kappa(\Gamma) v \, D_\Gamma^2u : D_\Gamma \n + 2 \kappa(\Gamma)I_3(\Gamma) \, u \, v.
\end{align*}
    
    %--------------------------------------------------------------------------
    \bibliographystyle{plain} 
    \bibliography{biblio}
    %--------------------------------------------------------------------------
    
\end{document}